\def\smallddots{\mathinner{\raise7pt\hbox{.}\raise4pt\hbox{.}\raise1pt\hbox{.}}}
\def\smallsdots{\mathinner{\raise1pt\hbox{.}\raise4pt\hbox{.}\raise7pt\hbox{.}}}
\DeclareMathOperator{\diag}{diag}
\DeclareMathOperator{\rank}{rank}
\DeclareMathOperator{\nrank}{nrank}
\numberwithin{equation}{section}
\numberwithin{table}{section}
\newtheorem{theorem}{Theorem}[section]
\newtheorem{lemma}{Lemma}[section]
\newtheorem{corollary}{Corollary}[section]
\newtheorem{fact}{Fact}[section]
\newtheorem{protoalgorithm}{Proto-Algorithm}[section]
\newtheorem{definition}{Definition}[section]
\newtheorem{remark}{Remark}[section]
\begin{document}

\title{\bf More on the Power of Randomized Matrix Multiplication
\thanks {Some results of this paper have been presented at the 
 ACM-SIGSAM International 
Symposium on Symbolic and Algebraic Computation (ISSAC '2011), San Jose, CA, 2011,
the
3nd International Conference on Matrix Methods in Mathematics and 
Applications (MMMA 2011) in
Moscow, Russia, June 22-25, 2011, 
the 7th International Congress on Industrial and Applied Mathematics 
(ICIAM 2011), in Vancouver, British Columbia, Canada, July 18-22, 2011,
the SIAM International Conference on Linear Algebra,
in Valencia, Spain, June 18-22, 2012, and 
the Conference on Structured Linear and Multilinear Algebra Problems (SLA2012),
in  Leuven, Belgium, September 10-14, 2012}}

\author{Victor Y. Pan$^{[1, 2],[a]}$ and Guoliang Qian$^{[2],[b]}$
% and Ai-Long Zheng$^{[2],[c]}$
\and\\
$^{[1]}$ Department of Mathematics and Computer Science \\
Lehman College of the City University of New York \\
Bronx, NY 10468 USA \\
$^{[2]}$ Ph.D. Programs in Mathematics  and Computer Science \\
The Graduate Center of the City University of New York \\
New York, NY 10036 USA \\
$^{[a]}$ victor.pan@lehman.cuny.edu \\
http://comet.lehman.cuny.edu/vpan/  \\
$^{[b]}$ gqian@gc.cuny.edu \\
%$^{[c]}$ azheng-1999@yahoo.com \\
} 
 \date{}

\maketitle

% - - - - - - - - - - - - - - - - - - - - - - - - - - - - - - - - - - - - -

\begin{abstract}
%\noindent 
A random matrix is likely to be well 
conditioned, and motivated by
this well known property 
 we employ
random matrix multipliers
to advance some fundamental
 matrix 
computations. This includes
numerical stabilization of
  Gaussian 
elimination with no pivoting
as well as block Gaussian elimination, 
 approximation of the 
leading and trailing singular spaces
of an ill conditioned matrix, 
 associated with 
its largest and smallest
singular values, respectively,
and approximation of this matrix 
by 
low-rank matrices,
with further extensions to 
%approximation of a matrix by a structured 
%matrix lying  nearby and to 
Tensor Train approximation  
and the computation of 
the numerical rank of a matrix.
We formally support 
the efficiency of the 
proposed techniques where 
we employ Gaussian random 
multipliers, but our extensive tests
have consistently
produced 
the same outcome 
where instead we 
used sparse
and 
structured random multipliers, 
defined by much fewer random parameters
compared to the number of their entries.
%Some of our results and techniques can be of independent interest,
%e.g.,  our estimates for the condition numbers of random Toeplitz 
%and circulant matrices and our
%variations of the Sher\-man--Mor\-rison--Wood\-bury formula.
\end{abstract}

\paragraph{\bf 2000 Math. Subject Classification:}
 15A52, 15A12, 15A06, 65F22, 65F05

\paragraph{\bf Key Words:}
Random matrices,
Random multipliers,
GENP,
Low-rank approximation,
Numerical rank
 
%Linear systems of equations

% - - - - - - - - - - - - - - - - - - - - - - - - - - - - - - - - - - - - -

\section{Introduction}\label{sintro}

% - - - - - - - - - - - - - - - - - - - - - - - - - - - - - - - - - - - - -

It is well known that A random matrix is likely
to be well conditioned
 \cite{D88},  \cite{E88},  \cite{ES05},
\cite{CD05}, \cite{SST06}, \cite{B11}, and  
 motivated by
this well known property we 
apply randomized matrix multiplication
to advance some fundamental 
matrix computations. 
We stabilize
 numerically Gaussian 
elimination with no pivoting
as well as block Gaussian elimination,  
approximate leading and trailing singular spaces
of an ill conditioned matrix $A$,
 associated with 
its largest and smallest singular values, 
respectively,
approximate this matrix by low-rank matrices,
%compute the numerical rank of a matrix 
%by using no pivoting or othogonalization,
and compute Tensor Train 
 approximation,
the numerical rank of a matrix, and an 
approximation of a matrix by a structured 
matrix lying  nearby.
Our numerical tests  
are in good accordance with our formal study,
except that in the tests
all algorithms  have fully preserved their 
 power even
where we dramatically decreased the
number of random parameters involved
by using sparse and structured multipliers.
%This empirical observation deserves further study.
%investigation.

% - - - - - - - - - - - - - - - - - - - - - - - - - - - - - - - - - - - - -

\subsection{Numerically safe Gaussian elimination with no pivoting}\label{sgenp}

% - - - - - - - - - - - - - - - - - - - - - - - - - - - - - - - - - - - - -

Hereafter ``flop" stands for ``arithmetic operation",
by saying ``expect" and ``likely" we
mean ``with probability $1$ or close to $1$",
$\sigma_j(A)$ denotes the $j$th largest
singular value
of an $n\times n$ matrix $A$, 
%of a rank $\rho$ for $j=1,\dots, \rho$, 
and the ratio
 $\kappa (A)=\sigma_1(A)/\sigma_{\rho}(A)$
for $\rho=\rank (A)$
 denotes its condition number. 
$\kappa (A)=||A||~||A^{-1}||$ if 
$\rho=n$, that  is if $A$ is a nonsingular matrix.
If this number is large in context,
then the matrix $A$
is {\em ill conditioned}, otherwise {\em well conditioned}.
For matrix inversion and solving linear systems of equations
the condition number represents the output magnification 
of 
input errors,
\begin{equation}\label{eqkappa}
\kappa(A)\approx \frac{||{\rm OUTPUT~ERROR}||}{||{\rm INPUT~ERROR}||},
\end{equation}
and backward error analysis implies similar 
magnification of rounding errors
\cite{GL96}, \cite{H02}, \cite{S98}.

To avoid dealing with singular or ill conditioned matrices in
Gaussian elimination, one incorporates pivoting, that is row or column interchange.
 {\em Gaussian elimination with no pivoting} (hereafter 
we refer to it as {\em GENP}) can easily fail in numerical 
computations with rounding errors, except for the cases
where the input
matrices are strongly well conditioned,
that is where all their leading principal square blocks
are nonsingular and  well conditioned.
 In particular 
%special input classes such as the classes of 
diagonally 
dominant as well as positive definite 
well conditioned matrices have this property. 
For such matrices,
GENP outperforms
 Gaussian elimination with pivoting
\cite[page 119]{GL96}.
Random matrices are likely to be 
strongly well conditioned, but
we do not solve random linear systems of equations.
We can, however randomize linear systems
by applying random multipliers
and then can apply GENP. We proposed and tested
this approach 
in \cite[Section 12.2]{PGMQ} 
and \cite{PQZa},
 and our tests consistently
showed its efficiency even where
we used just 
  circulant or Householder multipliers  
filled with integers $\pm 1$ 
and where we
limited randomization to the choice
of the signs $\pm$
(see our Table \ref{tab44}
and \cite[Table 2]{PQZa}).
Our Corollary \ref{cogh} supports these empirical 
observations provided that the multipliers 
are square 
Gaussian random matrices.
Estimation of the condition numbers of 
structured matrices
was stated as a challenge in \cite{SST06}.
The paper \cite{PQa} presents some initial
advance, but the problem remains largely open.

% - - - - - - - - - - - - - - - - - - - - - - - - - - - - - - - - - - - - -

\subsection{Randomized low-rank approximation and beyond}\label{srdpr}

% - - - - - - - - - - - - - - - - - - - - - - - - - - - - - - - - - - - - -

Our Corollary \ref{cogh}, 
supporting randomized GENP, relies on the probabilistic estimates
for the ranks and condition numbers of the products $\kappa (GA)$ and $\kappa (AH)$ 
in terms of $\kappa (A)$ where $G$ and $H$ are Gaussian random matrices
(see Theorem \ref{1}). We also apply the same estimates to support
randomized algorithms for the approximation of the
 leading singular spaces
of an ill conditioned matrix $A$
 associated with 
its largest singular values. This can be 
immediately extended to the
approximation of a matrix having a small numerical rank by 
low-rank matrices. The 
algorithm is numerially safe, runs at a low 
computational cost, and has a great number of
highly important applications to
matrix computations \cite{HMT11}.
We point out its further extensions 
to  the
approximation of a matrix by a structured 
matrix lying  nearby
and to computing Tensor Train 
 approximation and the numerical rank of a matrix. 
%Our auxiliary randomized techniques for estimating
%the condition number of a matrix in Remark \ref{rescond}
%can be of independent interest. 
Then again our formal support of these algorithms relies on 
using Gaussian random multipliers,
but our tests show  
that random Toeplitz multipliers 
are as effective. This suggests
formal and experimantal study
of various other random structured and sparse
multipliers that depend on smaller
numbers of random parameters.
Note the recent success of Tropp \cite{T11}
in this direction.

\subsection{Related work}\label{srel}

Preconditioning  of linear systems of equations  
is a classical subject \cite{A94}, \cite{B02}, \cite{G97}.
Randomized multiplicative preconditioning
for numerical stabilization of GENP 
was proposed in \cite[Section 12.2]{PGMQ} 
and \cite{PQZa}, but
with no formal support for this approach.
On low-rank approximation
we refer the reader to the survey \cite{HMT11}.
We cite these and other related works throughout 
the paper and refer to \cite[Section 11]{PQZb}
on further bibliography. For a natural extension
of our present work,
 one can combine
randomized matrix multiplication with
randomized augmentation and 
 additive preprocessing
of \cite{PGMQ},
 \cite{PIMR10},  \cite{PQ10}, \cite{PQ12},
 \cite{PQZC}, \cite{PQZb}, \cite{PY09}.
 
% - - - - - - - - - - - - - - - - - - - - - - - - - - - - - - - - - - - - -

\subsection{Organization of the paper and selective reading}\label{sorg}

% - - - - - - - - - - - - - - - - - - - - - - - - - - - - - - - - - - - - -

%We organize the paper as follows.
%on matrix computations
In the next 
section
we recall some definitions and basic results.
We  estimate  
the condition numbers of Gaussian random 
matrices
in Section \ref{srrm} and 
%the condition numbers
of
randomized  matrix products
in Section \ref{smrc},
where we also 
comment on numerical stabilization of GENP 
%The latter estimates 
%support using randomized multipliers for GENP
by means of randomized multilication.
%and for a number of other fundamental 
%matrix computations. We cover some of them in 
In Sections \ref{sapsr} and \ref{svianvtns}
 we 
apply randomized matrix multiplication to
%extend the recipes of Section \ref{srdpr} for computing preconditioners to the 
%case of rectangular matrices as well as matrices  having small numerical ranks,
approximate the leading and trailing singular spaces of a matrix having 
a small numerical rank,
approximate this matrix by a low-rank matrix,
and point out applications to tensor decomposition
and to approximation by structured matrices.
%Theorem \ref{thsumm} and 
%Corollary  \ref{cosumm1} and 
%the results of Sections \ref{scgrtm} and \ref{saug} 
%motivate the design of the respective 
%randomized algorithms
%and formally support our randomized preconditioning,
%but otherwise  Section \ref{sapsmwalt}
% can be read independently of 
% Sections \ref{srrm}--\ref{saug}.
%In Section \ref{sapsmwalt}we precondition an ill conditioned  linear system of equations
 In Section \ref{sexp} we cover numerical tests,
which constitute the contribution of the second author.
%In Section \ref{snewt} we briefly recall Newton's iteration 
%for the inversion of structured matrices,
% refine it with our preprocessing 
%and discuss its heuristic acceleration.
In Appendix A we estimate the probability that 
 a 
 random matrix 
has full rank
under the uniform probability distribution.
In Appendix B we compute the numerical 
rank of a matrix by
using randomization but neither pivoting nor orthogonaliztaion.
%recall two alternative randomized algorithms
%that compute numerical rank of a matrix by using no pivoting
%or orthogonalization.
%comment on the extension of our probabilistic estimates
% to the case of complex matrices.
%Appendix D presents our further results on randomized augmentation.

%------------------------------------------------------------------------------
%------------------------------------------------------------------------------

\section{Some definitions and basic results}\label{sdef}

%------------------------------------------------------------------------------

We assume computations in the field $\mathbb R$ of real numbers.

Hereafter ``flop" stands for ``arithmetic operation"; 
``expect" and ``likely"
mean ``with probability $1$ or close to $1$"
(we do not use the concept of the expected value), and
 the concepts ``large", ``small", ``near", ``closely approximate", 
``ill  conditioned" and ``well conditioned" are 
quantified in the context. 
%------------------------------------------------------------------------------
Next we recall and extend some customary definitions of matrix computations
\cite{GL96}, \cite{S98}.

%------------------------------------------------------------------------------

\subsection{Some basic definitions on matrix computations}\label{smat}

%------------------------------------------------------------------------------
 
%We reuse the definitions in the Introduction.

$\mathbb  R^{m\times n}$ is the class of real $m\times n$ matrices $A=(a_{i,j})_{i,j}^{m,n}$.
%------------------------------------------------------------------------------

$(B_1~|~\dots~|~B_k)=(B_j)_{j=1}^k$ is a $1\times k$ block matrix with blocks $B_1,\dots,B_k$. 
$\diag (B_1,\dots,B_k)=\diag(B_j)_{j=1}^k$ is a $k\times k$ block diagonal matrix with diagonal blocks $B_1,\dots,B_k$.

${\bf e}_i$ is the $i$th coordinate vector of dimension $n$ for
$i=1,\dots,n$. These vectors define
the identity
matrix $I_n=({\bf e}_1~|~\dots~|~{\bf e}_n)$ 
of size $n\times n$. 
$O_{k,l}$ is the $k\times l$ matrix filled with zeros. 
We write $I$ and $O$ where the size of a matrix 
is not important or is defined by context. 

$A^T$ is the transpose  of a 
matrix $A$. 
%$A^{-T}=(A^{-1})^T=(A^T)^{-1}$.
%A matrix $A$ is symmetric if $A=A^T$ and is 
%symmetric positive definite if
% $A=B^TB$  for a real nonsingular matrix $B$.

%------------------------------------------------------------------------------

\subsection{Range, rank, and
generic rank profile}\label{srnsn}

%------------------------------------------------------------------------------

$\mathcal R(A)$  denotes the range of an $m\times n$ matrix $A$, that is the linear space $\{{\bf z}:~{\bf z}=A{\bf x}\}$
generated by its columns. 
%$\mathcal N(A)$ denotes its null space $\{{\bf v}:~A{\bf v}={\bf 0}\}$,
$\rank (A)=\dim \mathcal R(A)$ denotes its rank.
$A_k^{(k)}$ denotes the leading, 
that is northwestern  $k\times k$ block 
submatrix of a matrix $A$.
A matrix of a rank $\rho$ has {\em generic rank profile}
if all its leading $i\times i$ blocks are nonsingular for $i=1,\dots,\rho$.
If such matrix is nonsingular itself, then  it is called {\em strongly nonsingular}. 

%------------------------------------------------------------------------------

\begin{fact}\label{far1}
The set $\mathbb M$ of $m\times n$ matrices of rank  $\rho$
is an algebraic variety of dimension  $(m+n-\rho)\rho$.
\end{fact}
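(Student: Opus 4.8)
The plan is to exhibit $\mathbb M$ as the image of an irreducible variety under a morphism whose generic fibers have constant dimension, and then invoke the standard fiber-dimension theorem. First I would recall the rank factorization: any $m\times n$ matrix $A$ of rank exactly $\rho$ can be written as $A = BC$ with $B \in \mathbb R^{m\times \rho}$ of full column rank $\rho$ and $C \in \mathbb R^{\rho\times n}$ of full row rank $\rho$. Conversely, any such product $BC$ has rank exactly $\rho$. Thus the multiplication map
\[
\mu:\ \{B\in\mathbb R^{m\times\rho}:\ \rank(B)=\rho\}\times\{C\in\mathbb R^{\rho\times n}:\ \rank(C)=\rho\}\ \longrightarrow\ \mathbb M,\qquad (B,C)\mapsto BC,
\]
is a surjection. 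The domain is a Zariski-open (hence irreducible, since it is open and dense in the affine space) subset of $\mathbb R^{m\times\rho}\times\mathbb R^{\rho\times n}$, of dimension $m\rho+\rho n$.

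Next I would compute the fibers of $\mu$. If $BC = B'C'$ with all four factors of full rank $\rho$, then the column spaces of $B$ and $B'$ coincide (both equal $\mathcal R(BC)$), so $B' = BS$ for a unique invertible $S\in\mathrm{GL}_\rho(\mathbb R)$, and then necessarily $C' = S^{-1}C$. Hence every fiber $\mu^{-1}(A)$ is an orbit of the free $\mathrm{GL}_\rho(\mathbb R)$-action $(B,C)\cdot S = (BS, S^{-1}C)$, and so has dimension exactly $\rho^2$. By the theorem on dimension of fibers of a dominant morphism between irreducible varieties (or simply: $\dim(\text{domain}) = \dim(\text{image}) + \dim(\text{generic fiber})$ when all fibers have the same dimension), we get
\[
\dim\mathbb M \;=\; (m\rho + \rho n) - \rho^2 \;=\; (m+n-\rho)\rho .
\]
It remains to note that $\mathbb M$ is indeed a (locally closed, constructible) algebraic variety: it is the set where all $(\rho+1)\times(\rho+1)$ minors vanish minus the closed set where all $\rho\times\rho$ minors vanish, i.e. the difference of two Zariski-closed sets; one can then take its closure to speak of its dimension, the closure being exactly the determinantal variety of matrices of rank $\le\rho$.

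The main obstacle, and the only point requiring care, is the justification of the fiber-dimension count over $\mathbb R$ rather than over an algebraically closed field: the cleanest route is to carry out the computation over $\mathbb C$ (where the image of a morphism is constructible and the fiber-dimension theorem applies verbatim), obtaining $\dim_{\mathbb C}=(m+n-\rho)\rho$ for the complex determinantal variety, and then observe that the real determinantal variety is Zariski-dense in its complexification and has the same dimension. Alternatively one can argue directly with the Jacobian of $\mu$ at a generic point, showing its rank is $m\rho+\rho n-\rho^2$ because the kernel of the differential is exactly the tangent space $\{(BX,-XC):X\in\mathbb R^{\rho\times\rho}\}$ to the $\mathrm{GL}_\rho$-orbit; this is a routine linear-algebra verification. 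Everything else is bookkeeping.
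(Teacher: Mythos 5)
Your proof is correct, but it takes a genuinely different route from the paper. The paper works in local coordinates: on the open chart where the leading $\rho\times\rho$ block $M_{00}$ is invertible, it observes that the Schur complement $M_{11}-M_{10}M_{00}^{-1}M_{01}$ must vanish, so $M_{11}$ is a function of the remaining blocks and the chart is a graph over the $mn-(m-\rho)(n-\rho)$ free entries; summing over the finitely many choices of pivot block covers all of $\mathbb M$, giving $\dim\mathbb M=mn-(m-\rho)(n-\rho)=(m+n-\rho)\rho$. You instead parameterize $\mathbb M$ from above by the rank factorization $A=BC$ and subtract the $\rho^2$-dimensional $\mathrm{GL}_\rho$ fiber, obtaining $m\rho+n\rho-\rho^2$. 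The paper's argument is more elementary and local (it needs nothing beyond the implicit-function/graph observation and requires no fiber-dimension theorem), whereas yours is more structural: it makes the $\mathrm{GL}_\rho$-torsor structure explicit and generalizes more cleanly, at the price of invoking the fiber-dimension theorem and the extra care you rightly flag about real versus complex points. Both compute the same number, and your worries about constructibility and the $\mathbb R$ vs.\ $\mathbb C$ issue are legitimate but also apply to the paper's proof, which glosses over them just as quickly.
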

\begin{proof}
Let $M$ be an $m\times n$ matrix of a rank $\rho$
with a nonsingular leading $\rho\times \rho$ block $M_{00}$
and write $M=\begin{pmatrix}
M_{00}   &   M_{01}   \\
M_{10}   &   M_{11}
\end{pmatrix}$.
Then the $(m- \rho)\times (n- \rho)$ {\em Schur complement} $M_{11}-M_{10}M_{00}^{-1}M_{01}$
must vanish, which imposes $(m-\rho)(n-\rho)$ algebraic equations on the entries of $M$. 
Similar argument can be applied  where any $\rho\times \rho$
submatrix of the matrix $M$ 
(among $\begin{pmatrix}
m      \\
\rho
\end{pmatrix}\begin{pmatrix}
n     \\
\rho
\end{pmatrix}$ such submatrices)
is nonsingular. Therefore 
$\dim \mathbb M=mn-(m-\rho)(n-\rho)=(m+n-\rho)\rho$.
\end{proof}  

%------------------------------------------------------------------------------

%\begin{theorem}\label{thmm} (See \cite[Section 3.5]{H02}.) 
%For a pair of $n\times n$ matrices $A$ and $B$ and a vector ${\bf v}$ of dimension $n$, we have 
%$||fl(A{\bf v})-A{\bf v}||_l\le \gamma_n ||A||_l||{\bf v}||_l$, $l=1,\infty$,  
%and $||fl(AB)-AB||_l\le \gamma_n ||A||_l||B||_l$, $l=1,2,F$, assuming classical 
%mat\-rix-by-mat\-rix and mat\-rix-by-vec\-tor multiplication.
%\end{theorem}

%------------------------------------------------------------------------------

\subsection{Orthogonal, Toeplitz and circulant matrices}\label{smatotc}

A real matrix $Q$ is called  
{\em orthogonal} if $Q^TQ=I$ 
 or $QQ^T=I$. 
In Section \ref{sexp} we write $Q(A)$ to denote a unique orthogonal matrix 
specified by the following result.
%------------------------------------------------------------------------------

\begin{fact}\label{faqrf} \cite[Theorem 5.2.2]{GL96}.
QR factorization $A=QR$ of a matrix $A$ having full column rank
into the product of an orthogonal matrix $Q=Q(A)$ 
and an upper triangular matrix $R=R(A)$ is unique 
provided that the factor $R$ is a square matrix 
with positive diagonal entries. 
\end{fact}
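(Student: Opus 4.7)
The plan is to prove both existence and uniqueness, since the phrasing "is unique" presupposes an instance exists. For existence, I would invoke the Gram--Schmidt process applied to the columns $a_1,\dots,a_n$ of $A$: because $A$ has full column rank, at each step the residual $a_k - \sum_{j<k} \langle a_k,q_j\rangle q_j$ is nonzero, so we may normalize it to obtain $q_k$, and the nonzero norm becomes the $k$th diagonal entry of $R$, which is automatically positive. Assembling the $q_k$ into $Q$ and the expansion coefficients into $R$ gives $A=QR$ with $R$ upper triangular and having strictly positive diagonal.

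For uniqueness, I would suppose $A = Q_1 R_1 = Q_2 R_2$ with each $Q_i$ orthogonal (orthonormal columns) and each $R_i$ upper triangular $n\times n$ with positive diagonal. The key reduction is to form the Gram matrix
\begin{equation*}
A^T A = R_1^T Q_1^T Q_1 R_1 = R_1^T R_1 = R_2^T R_2,
\end{equation*}
which is symmetric positive definite because $A$ has full column rank. Thus both $R_1$ and $R_2$ are Cholesky factors of the same SPD matrix, and uniqueness of $R$ reduces to uniqueness of the Cholesky factorization with positive diagonals. Once $R_1=R_2$ is established, invertibility of $R_1$ (its diagonal entries are positive) yields $Q_1 = AR_1^{-1} = Q_2$.

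The main obstacle, and really the heart of the argument, is the uniqueness of Cholesky. I would handle this by induction on $n$. The $n=1$ case is immediate: $R^TR = (r^2)$ with $r>0$ forces $r$ to be the positive square root. For the inductive step, I would partition
\begin{equation*}
R_i = \begin{pmatrix} \rho_i & s_i^T \\ 0 & \tilde R_i \end{pmatrix},\qquad i=1,2,
\end{equation*}
and equate the $(1,1)$ entry, the first row, and the trailing block of $R_1^TR_1 = R_2^TR_2$. This pins down $\rho_i$ as the positive square root of the $(1,1)$ entry of $A^TA$, then forces $s_1=s_2$, and finally reduces the trailing block equation to $\tilde R_1^T\tilde R_1 = \tilde R_2^T\tilde R_2$, to which the induction hypothesis applies.

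Alternatively, one can bypass the Cholesky detour and argue directly: from $Q_1R_1=Q_2R_2$ obtain $Q_2^TQ_1 = R_2 R_1^{-1}$. The left-hand side has orthonormal columns, while the right-hand side is upper triangular with positive diagonal (product of two such matrices). An upper triangular matrix with orthonormal columns and positive diagonal must be the identity, which yields $Q_1=Q_2$ and $R_1=R_2$ at once. I would likely present this second, shorter route in the final write-up since it avoids a separate Cholesky lemma, though both approaches are routine.
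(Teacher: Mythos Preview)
The paper does not prove this fact at all; it is simply quoted from Golub and Van Loan \cite[Theorem 5.2.2]{GL96}, so there is no ``paper's own proof'' to compare against. Your proposal is therefore strictly more than what the paper offers, and both the existence argument via Gram--Schmidt and the uniqueness argument via the Cholesky factorization of $A^TA$ are correct and standard.

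One small point in your alternative route deserves a sentence of justification. You assert that $Q_2^TQ_1$ has orthonormal columns, but $(Q_2^TQ_1)^T(Q_2^TQ_1)=Q_1^TQ_2Q_2^TQ_1$, and this equals $I_n$ only because $Q_2Q_2^T$ is the orthogonal projector onto $\mathcal R(Q_2)=\mathcal R(A)=\mathcal R(Q_1)$, so that $Q_2Q_2^TQ_1=Q_1$. Without this remark the step looks like it is using $Q_2Q_2^T=I_m$, which fails for rectangular $Q_2$. A cleaner variant that sidesteps the issue: having already shown $R_1^TR_1=R_2^TR_2$, compute $(R_2R_1^{-1})^T(R_2R_1^{-1})=R_1^{-T}R_2^TR_2R_1^{-1}=R_1^{-T}R_1^TR_1R_1^{-1}=I_n$, so $R_2R_1^{-1}$ is an $n\times n$ orthogonal upper triangular matrix with positive diagonal, hence the identity.
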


%------------------------------------------------------------------------------

A {\em Toep\-litz} $m\times n$ matrix $T_{m,n}=(t_{i-j})_{i,j=1}^{m,n}$ 
%(resp. Hankel matrices $H=(h_{i+j})_{i,j=1}^{m,n}$) 
is defined by its first row $(t_{-h})_{h=0}^{n-1}$
and the subvector  $(t_h)_{h=1}^{n-1}$ of its first column vector.
Circulant matrices are
the subclass of Toep\-litz 
 matrices where $t_g=t_h$ if $|g-h|=n$.  

\begin{theorem}\label{tht}
$O((m+n)\log (m+n))$ flops suffice to
multiply an $m\times n$ Toeplitz matrix by a vector. 
\end{theorem}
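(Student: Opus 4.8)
The plan is to reduce multiplication of an $m\times n$ Toeplitz matrix $T_{m,n}$ by a vector to a single multiplication of a circulant matrix of size $N\times N$ by a vector, where $N$ is taken to be a power of two with $N\ge m+n-1$, and then to carry out the circulant product via the FFT. First I would recall that an $N\times N$ circulant matrix $C$ with first column $\mathbf{c}$ satisfies $C = F^{-1}\diag(F\mathbf{c})F$, where $F=(\omega^{ij})_{i,j=0}^{N-1}$ is the matrix of the discrete Fourier transform at a primitive $N$th root of unity $\omega$; consequently the product $C\mathbf{v}$ can be computed as $F^{-1}(\,(F\mathbf{c})\odot(F\mathbf{v})\,)$, a Hadamard product of two DFTs followed by one inverse DFT. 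Since $N$ is a power of two, each of these three transforms costs $O(N\log N)$ flops by the standard radix-2 FFT, and the Hadamard product costs $O(N)$ flops, so $C\mathbf{v}$ is formed in $O(N\log N)=O((m+n)\log(m+n))$ flops.

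The remaining step is the embedding. Given $T_{m,n}=(t_{i-j})_{i,j=1}^{m,n}$, which by the definition in the excerpt is determined by the scalars $t_{-(n-1)},\dots,t_{-1},t_0,t_1,\dots,t_{m-1}$, I would build the $N\times N$ circulant matrix $C$ whose first column is
\[
\mathbf{c}=(t_0,t_1,\dots,t_{m-1},\,\underbrace{0,\dots,0}_{N-(m+n-1)},\,t_{-(n-1)},\dots,t_{-1})^T,
\]
so that the leading $m\times n$ submatrix of $C$ is exactly $T_{m,n}$. To multiply $T_{m,n}$ by a vector $\mathbf{x}\in\mathbb R^{n}$, extend $\mathbf{x}$ by zeros to a vector $\widehat{\mathbf{x}}\in\mathbb R^{N}$, compute $C\widehat{\mathbf{x}}$ as above, and read off its first $m$ coordinates: these equal $T_{m,n}\mathbf{x}$, because the zero padding of $\mathbf{x}$ annihilates precisely the columns of $C$ outside the first $n$, and the truncation to the first $m$ rows discards the rows outside the leading block. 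The zero-padding and truncation are free, $N<2(m+n)$, and the whole computation therefore runs in $O((m+n)\log(m+n))$ flops.

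The only genuine point requiring care—and the one I would present in most detail—is verifying that the leading $m\times n$ block of the circulant $C$ with the first column displayed above is indeed $(t_{i-j})$: this is a bookkeeping check on how the cyclic shifts of $\mathbf{c}$ populate the entries $C_{i,j}$ for $1\le i\le m$, $1\le j\le n$, using that $i-j$ ranges over $-(n-1),\dots,m-1$ and that the wrap-around index $i-j \bmod N$ lands in the block of $t_{-(n-1)},\dots,t_{-1}$ exactly when $i-j<0$. Everything else is the standard circulant-diagonalization identity and the $O(N\log N)$ FFT cost, both of which I would invoke rather than prove. (If one prefers to avoid the FFT over $\mathbb R$, the same bound follows from performing the three transforms over $\mathbb C$ and taking real parts, which changes only the constant hidden in the $O(\cdot)$.)
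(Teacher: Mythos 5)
The paper states Theorem~\ref{tht} without proof, treating it as a classical fact. Your proof is correct and is exactly the standard argument one would give: embed $T_{m,n}$ as the leading $m\times n$ block of an $N\times N$ circulant matrix with $N$ a power of two, $N\ge m+n-1$, diagonalize the circulant by the DFT, and compute the product via three length-$N$ FFTs plus a Hadamard product, for a total of $O(N\log N)=O((m+n)\log(m+n))$ flops; your choice of first column $\mathbf{c}$ and the wrap-around bookkeeping are both right.
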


%------------------------------------------------------------------------------

\subsection{Norms, SVD, generalized inverse, and singular spaces}\label{sosvdi}

%------------------------------------------------------------------------------

$||A||_h$ is the $h$-norm and
$||A||_F=\sqrt{\sum_{i,j=1}^{m,n}|a_{i,j}|^2}$ is the Frobenius norm
of a matrix $A=(a_{i,j})_{i,j=1}^{m,n}$.
% for  $h=1,2,\infty$.
We write $||A||=||A||_2$ and $||{\bf v}||=\sqrt {{\bf v}^T{\bf v}}=||{\bf v}||_2$  
and recall from \cite[Section 2.3.2 and Corollary 2.3.2]{GL96} that
$$
{\rm max}_{i,j=1}^{m,n}|a_{i,j}|\le ||A||=||A^T||\le \sqrt {mn}~{\rm max}_{i,j=1}^{m,n}|a_{i,j}|,
%~||A||\le||A||_F\le \sqrt n~||A|| 
$$
\begin{equation}\label{eqnorm12}
\frac{1}{\sqrt m}||A||_1\le||A||\le \sqrt n ||A||_1,~~||A||_1=||A^T||_{\infty},~~
||A||^2\le||A||_1||A||_{\infty}, 
\end{equation}
\begin{equation}\label{eqfrob}
||A||\le||A||_F\le \sqrt n ~||A||, 
\end{equation}
\begin{equation}\label{eqnorm12inf}
||AB||_h\le ||A||_h||B||_h~{\rm for}~h=1,2,\infty~{\rm and~any~matrix~product}~AB.
%~{\rm and}~
\end{equation}
%\begin{equation}\label{eqnormdiag}
%||\diag(B_j)_j||=\max_j ||B||_j.
%\end{equation}
%A matrix $A$  is {\em normalized}  if $||A||=1$.
%We write $A\approx B$ if $||A-B||\ll ||A||+||B||$.

%------------------------------------------------------------------------------
%Finally we recall the following result on perturbation norms.

%------------------------------------------------------------------------------

Define an {\em SVD} or {\em full SVD} of an $m\times n$ matrix $A$ of a rank 
 $\rho$ as follows,
\begin{equation}\label{eqsvd}
A=S_A\Sigma_AT_A^T.
\end{equation}
%provided 
Here
$S_AS_A^T=S_A^TS_A=I_m$, $T_AT_A^T=T_A^TT_A=I_n$,
$\Sigma_A=\diag(\widehat \Sigma_A,O_{m-\rho,n-\rho})$, 
$\widehat \Sigma_A=\diag(\sigma_j(A))_{j=1}^{\rho}$,
%\begin{equation}\label{eqsingv}
$\sigma_j=\sigma_j(A)=\sigma_j(A^T)$
%\end{equation}
is the $j$th largest singular value of a matrix $A$
 for $j=1,\dots,\rho$, and we write
$\sigma_j=0$ for $j>\rho$. These values have 
the minimax property  
\begin{equation}\label{eqminmax}
\sigma_j=\max_{{\rm dim} (\mathbb S)=j}~~\min_{{\bf x}\in \mathbb S,~||{\bf x}||=1}~~~||A{\bf x}||,~j=1,\dots,\rho,
\end{equation}
where $\mathbb S$ denotes linear spaces  \cite[Theorem 8.6.1]{GL96}.
Consequently 
 $\sigma_{\rho}>0$,  
 $\sigma_1=\max_{||{\bf x}||=1}||A{\bf x}||=||A||$.
%$\sigma_n=\min_{||{\bf x}||=1}||A{\bf x}||$ if $m\ge n$,
% $\sigma_m=\min_{||{\bf x}||=1}||A^T{\bf x}||$ if $m\le n$,

%--------------------------------------------------------------------------
 
\begin{fact}\label{faccondsub} 
If $A_0$ is a 
%$k\times l$
submatrix of a 
%$p\times q$ 
matrix $A$, 
%for $k\le m$ and $l\le q$,  
then
$\sigma_{j} (A)\ge \sigma_{j} (A_0)$ for all $j$.
%and therefore $\kappa (A)\ge \kappa (A)_0$.
%c) there is a $p\times q$ block submatrix $\widehat A$ of the matrix $A$ 
%such that $\lceil m/p\rceil^{1/2}\lceil n/q\rceil^{1/2}||\widehat A||\ge ||A||$.
\end{fact}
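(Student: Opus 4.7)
The plan is to derive the inequality from the minimax characterization (2.5) by reducing to the two elementary cases of deleting a single row or a single column, since any submatrix $A_0$ of $A$ can be obtained as the result of finitely many such deletions applied successively.

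First I would handle the row-deletion case. If $A_0$ is obtained from $A$ by striking out one row, then for every vector $\mathbf{x}$ of the appropriate dimension one has the pointwise bound $\|A\mathbf{x}\|^2 = \|A_0\mathbf{x}\|^2 + |r^T\mathbf{x}|^2 \ge \|A_0\mathbf{x}\|^2$, where $r$ is the deleted row. Consequently, for any linear subspace $\mathbb{S}$ of dimension $j$,
\[
\min_{\mathbf{x}\in\mathbb{S},\,\|\mathbf{x}\|=1} \|A\mathbf{x}\| \;\ge\; \min_{\mathbf{x}\in\mathbb{S},\,\|\mathbf{x}\|=1} \|A_0\mathbf{x}\|,
\]
and taking the maximum over all $j$-dimensional $\mathbb{S}$ on both sides yields $\sigma_j(A)\ge\sigma_j(A_0)$ by (2.5).

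Next I would handle the column-deletion case. If $A_0$ is obtained from $A$ by striking out one column, then every unit vector $\mathbf{y}$ in a $j$-dimensional subspace $\mathbb{S}_0$ of the smaller domain extends uniquely to a unit vector $\mathbf{x}$ in the $j$-dimensional subspace $\mathbb{S}\subseteq\mathbb{R}^n$ obtained by inserting a zero in the deleted coordinate, and this extension satisfies $A\mathbf{x}=A_0\mathbf{y}$. Thus the two min-expressions coincide, and taking the max over $\mathbb{S}_0$ is at most the max over all $j$-dimensional subspaces $\mathbb{S}$ of $\mathbb{R}^n$, giving $\sigma_j(A_0)\le\sigma_j(A)$. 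Alternatively, this case follows at once from the row case applied to $A^T$, together with $\sigma_j(A)=\sigma_j(A^T)$.

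Finally, to obtain the full statement, I would iterate: any submatrix is obtained by a sequence of row- and column-deletions, and at each step the $j$-th singular value is non-increasing, so the composition of these inequalities gives $\sigma_j(A)\ge\sigma_j(A_0)$ for every $j$. There is no serious obstacle here; the only mild point to watch is that when $j$ exceeds $\rank(A_0)$ the claim is trivial since $\sigma_j(A_0)=0$, so the minimax identity (2.5) need only be invoked in the non-trivial range $j\le\rank(A_0)$.
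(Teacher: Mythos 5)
Your proof is correct, and it takes a more self-contained route than the paper's. The paper disposes of the core case---$A_0$ a block of columns of $A$---by citing \cite[Corollary 8.6.3]{GL96}, and then extends to arbitrary submatrices by observing that transposition and row/column permutations leave singular values unchanged. You instead prove the elementary cases from scratch: deleting one row can only decrease $\|A{\bf x}\|$ pointwise, so the minimax characterization (\ref{eqminmax}) immediately gives monotonicity of every $\sigma_j$; deleting one column is handled either by the isometric zero-padding embedding of subspaces or, as you note, by transposition; and an arbitrary submatrix is reached by iterating single deletions. The two arguments are structurally parallel (both reduce to column/row deletion and both exploit $\sigma_j(A)=\sigma_j(A^T)$ and permutation invariance), but yours replaces the external citation with a short direct verification from a formula the paper already records, which makes the fact self-contained; the paper's version is shorter at the cost of an appeal to \cite{GL96}. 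Your closing remark about the range of $j$ is the right care to take, since (\ref{eqminmax}) is stated only for $j\le\rank(A)$, and $\sigma_j(A_0)=0$ settles the remaining indices trivially.
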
 

%--------------------------------------------------------------------------
 
\begin{proof}
\cite[Corollary 8.6.3]{GL96} implies 
the claimed bound
%Fact \ref{faccondsub} 
where $A_0$ is any block of columns of 
the matrix $A$. Transposition of a matrix and permutations 
of its rows and columns do not change singular values,
and thus we can extend the bounds to
all submatrices $A_0$.
\end{proof}%------------------------------------------------------------------------------

$A^+=T_A\diag(\widehat \Sigma_A^{-1},O_{n-\rho,m-\rho})S_A^T$ is the Moore--Penrose 
pseudo-inverse of the matrix $A$ of (\ref{eqsvd}), and
%$A^+$ is a left  inverse of a matrix $A$ of full rank for $m\ge n$ and its right inverse for $m\le n$. 
\begin{equation}\label{eqnrm+}
||A^+||=1/\sigma_{\rho}(A)
\end{equation}
 for 
a matrix $A$ of a rank $\rho$.  $A^{+T}$ stands for $(A^+)^T=(A^T)^+$,
and $A^{-T}$ stands for $(A^{-1})^T=(A^T)^{-1}$.

In Sections \ref{sapsr}--\ref{svianvtns}
we use the following definitions.
For every integer $k$ in the range $1\le k<\rank(A)$ define the partition 
$S_A=(S_{k,A}~|~S_{A,m-k})$ and $T_A=(T_{k,A}~|~T_{A,n-k})$
where the submatrices $S_{k,A}$ and $T_{k,A}$ are formed by the
first $k$ columns of the matrices $S_A$ and $T_A$, respectively.
%-------------------------------------------------------------------------- 
%The submatrices $S_{k,A}$ and $T_{k,A}$
%$S_A(I_k,O_{r,m-k})^T$ and $T_A(I_k,O_{k,n-k})^T$ 
Write $\Sigma_{k,A}=\diag(\sigma_j(A))_{j=1}^k$,
$\mathbb S_{k,A}=\mathcal R(S_{k,A})$ and  $\mathbb T_{k,A}=\mathcal R(T_{k,A}$).
If $\sigma_k>\sigma_{k+1}$, 
then 
$\mathbb S_{k,A}$ and $\mathbb T_{k,A}$ are
the left and right {\em leading singular spaces}, respectively,
    associated with the $k$ largest singular values of the matrix $A$,
whereas their orthogonal complements $\mathbb S_{A,m-k}=\mathcal R(S_{A,m-k})$ 
and $\mathbb T_{A,n-k}=\mathcal R(T_{A,n-k})$ 
%of these singular spaces  
are the left and right {\em trailing singular spaces}, respectively,
associated with the other singular values of $A$. 
The pairs of subscripts $\{k,A\}$ versus $\{A,m-k\}$ and $\{A,n-k\}$ mark 
the leading versus trailing
singular spaces.     
The left singular spaces of $A$ are 
the right   singular spaces of $A^T$ and vice versa.
All matrix bases for the singular spaces $\mathbb S_{k,A}$ and $\mathbb T_{k,A}$
are given by matrices $S_{k,A}X$ and
$T_{k,A}Y$, respectively,
for  nonsingular $k\times k$ matrices $X$ and $Y$.
Orthogonal matrices $X$ and $Y$ define orthogonal matrix bases
for these spaces.  
$B$ is an {\em approximate matrix basis} for 
a space $\mathbb S$ within a relative error norm bound $\tau$
if there exists a matrix $E$ such that $B+E$ is a matrix basis for 
this space $\mathbb S$ and if $||E||\le \tau ||B||$.
%We use such bases in Sections \ref{sapsr}--\ref{svianv1}. 
%By dropping the last $m-\rho$ columns of the matrix $S_A$ and
%the last $n-\rho$ columns of the matrix $T_A$ for $\rho=\rank (A)$, we
% obtain {\em compact SVD} of the matrix $A$,
%\begin{equation}\label{eqcsvd}
%A=S_{\rho,A}\Sigma_{\rho,A}T_{\rho,A}^T.
%\end{equation}

%------------------------------------------------------------------------------

\subsection{Condition number, numerical rank and  generic conditioning profile
%and  perturbation norm bounds
}\label{scnpn}

%------------------------------------------------------------------------------

$\kappa (A)=\frac{\sigma_1(A)}{\sigma_{\rho}(A)}=||A||~||A^+||$ is the condition 
number of an $m\times n$ matrix $A$ of a rank $\rho$. Such matrix is {\em ill conditioned} 
if $\sigma_1(A)\gg\sigma_{\rho}(A)$ and is {\em well conditioned}
otherwise. See \cite{D83}, \cite[Sections 2.3.2, 2.3.3, 3.5.4, 12.5]{GL96}, 
\cite[Chapter 15]{H02}, \cite{KL94}, \cite[Section 5.3]{S98}, 
on the estimation of matrix norms and condition numbers. 

An $m\times n$ matrix $A$ has {\em numerical rank}, denoted  $\nrank(A)$
and not exceeding $\rank (A)$, 
if the ratios $\sigma_{j}(A)/||A||$
are small for $j>\nrank(A)$ but not for $j\le \nrank(A)$. 

%------------------------------------------------------------------------------

\begin{remark}\label{renul}
One can specify the adjective ``small" 
above as
``smaller than  a fixed positive tolerance".
The choice of the tolerance can be a challenge,
e.g., for the matrix $\diag(1.1^{-j})_{j=0}^{999}$.
% or any other $1000\times 1000$ matrix $A$
%with singular values $\sigma_j(A)=2^{1000-j}$, $j=1,\dots,1000$.
\end{remark}

%------------------------------------------------------------------------------
  
If a well conditioned  $m\times n$ matrix $A$ has a rank $\rho<l=\min\{m,n\}$, 
 then almost all its close neighbours have full rank $l$
(see Section \ref{sngrm}), and
all of them have numerical rank $\rho$.
Conversely, suppose a matrix $A$
has a positive
numerical rank $\rho=\nrank (A)$ and 
{\em truncate its SVD}  by
setting 
to $0$ all its 
singular values, except for the  $\rho$ largest  ones.
Then the resulting matrix  $A-E$ is well conditioned and
has rank $\rho$ and
 $||E||=\sigma_{\rho+1}(A)$,
and so $A-E$ is a rank-$\rho$ approximation to the matrix $A$
within the error norm bound $\sigma_{\rho+1}(A)$.
At a
lower computational cost we can obtain rank-$\rho$ approximations 
of  the matrix $A$ from its
  rank-revealing factorizations 
%for proper permutation matrices  $P$
\cite{GE96}, \cite{HP92}, \cite{P00a}, 
and we further decrease the computational cost
by
applying randomized algorithms
in Section \ref{sapsr}. 
%The following equations link the ranges and null spaces of
%the matrices $A$ and $A^T$ with some singular spaces of the
%matrices $A$ and $\tilde A$.

%------------------------------------------------------------------------------

%\begin{equation}\label{eqrnapp}
%\mathcal R(A)=\mathbb S_{q, \tilde A}=\mathbb S_{q,A},~
%\mathcal R(A^T)=\mathbb T_{q, \tilde A}=\mathbb T_{q,A},~
%\mathcal N(A)=\mathbb T_{\tilde A,n-q},~
%\mathcal N(A^T)=\mathbb S_{\tilde A,m-q}. 
%\end{equation}

%------------------------------------------------------------------------------

An $m\times n$ matrix 
has {\em generic conditioning profile}
(cf. the  end of Section \ref{srnsn})
if it
has a numerical rank $\rho$ and if
its leading $i\times i$ blocks are nonsingular and well conditioned for $i=1,\dots,\rho$.
If such matrix has full rank (that is if $\rho=\min\{m,n\}$) 
and if it is well conditioned  itself, 
 then we call it {\em strongly well conditioned}.
The following theorem 
shows that
GENP 
and block Gaussian elimination
applied to a strongly 
well conditioned matrix
are numerically safe.  

% - - - - - - - - - - - - - - - - - - - - - - - - - - - - - - - - - - - - -

\begin{theorem}\label{thnorms} Cf.  \cite[Theorem 5.1]{PQZa}.
Assume GENP
or block Gaussian elimination
applied to 
 an
$n\times n$
matrix $A$ and
write $N=||A||$ and $N_-=\max_{j=1}^n ||(A_j^{(j)})^{-1}||$. 
Then the absolute values of all pivot elements of GENP 
and the norms of all pivot blocks of 
block Gaussian elimination
do not exceed $N+N_-N^2$,
whereas the absolute values of the reciprocals of these 
elements and the norms of the inverses of the blocks do not
exceed $N_-$.
\end{theorem}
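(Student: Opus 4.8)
The plan is to track the quantities produced by GENP (or block Gaussian elimination) through the elimination process and express them in terms of the leading principal submatrices $A_j^{(j)}$, then bound everything by $N$ and $N_-$. First I would recall the standard fact that GENP on $A$ is feasible exactly when every leading block $A_j^{(j)}$ is nonsingular, and that the pivot block produced at step $k$ is the Schur complement $S_k = A_k^{(k)} - $ (lower-left block)$\cdot (A_{k-1}^{(k-1)})^{-1}\cdot$ (upper-right block) relative to the partition of $A_k^{(k)}$ with leading block $A_{k-1}^{(k-1)}$; equivalently, by the nested structure of Schur complements, the $k$-th pivot (scalar in GENP, block in block Gaussian elimination) is a $1\times 1$ (resp. small) corner block of the Schur complement of $A_{k-1}^{(k-1)}$ in $A$. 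The key identity I would invoke is the classical one: if $A = \begin{pmatrix} B & C \\ D & E \end{pmatrix}$ with $B = A_{k-1}^{(k-1)}$ nonsingular, then the trailing Schur complement is $E - D B^{-1} C$, and the pivot block at step $k$ is the leading block of this Schur complement. Hence its norm is at most $\|E\| + \|D\|\,\|B^{-1}\|\,\|C\| \le N + N \cdot N_- \cdot N = N + N_- N^2$, using that $B$, $C$, $D$, $E$ are all submatrices of $A$ (so have $2$-norm at most $N$ by Fact~\ref{faccondsub}) and that $\|B^{-1}\| = \|(A_{k-1}^{(k-1)})^{-1}\| \le N_-$.

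For the reciprocal bound, I would use the other standard identity relating the inverse of a leading block to its Schur complement: the inverse of the $k$-th pivot block equals the trailing $1\times 1$ (resp. small) corner block of $(A_k^{(k)})^{-1}$. Concretely, writing $A_k^{(k)} = \begin{pmatrix} A_{k-1}^{(k-1)} & * \\ * & * \end{pmatrix}$, the block inversion formula gives that the $(2,2)$ block of $(A_k^{(k)})^{-1}$ is exactly $S_k^{-1}$, where $S_k$ is the $k$-th pivot block. Since $S_k^{-1}$ is then a submatrix of $(A_k^{(k)})^{-1}$, Fact~\ref{faccondsub} yields $\|S_k^{-1}\| \le \|(A_k^{(k)})^{-1}\| \le N_-$. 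This handles both the absolute values of the reciprocals of the scalar pivots in GENP and the norms of the inverses of the pivot blocks in block Gaussian elimination, uniformly over $k = 1,\dots,n$.

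The one point requiring care — and the main obstacle — is making precise the claim that the $k$-th pivot block of the elimination is literally a corner block of the Schur complement of $A_{k-1}^{(k-1)}$ in $A$ (and, for the reciprocal bound, that its inverse is a corner block of $(A_k^{(k)})^{-1}$). This is the content of the ``nested Schur complement'' or ``quotient formula'' for Gaussian elimination: after $k-1$ elimination steps the trailing $(n-k+1)\times(n-k+1)$ submatrix equals the Schur complement of $A_{k-1}^{(k-1)}$ in $A$, and the next pivot block is its leading block. I would state this as a lemma (citing \cite[page 119]{GL96} or \cite{H02}, or giving a one-line induction), after which the norm estimates above are immediate. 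Everything else is a routine application of submultiplicativity~(\ref{eqnorm12inf}) and the submatrix monotonicity of singular values from Fact~\ref{faccondsub}.
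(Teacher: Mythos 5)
Your proof is correct, and since the paper itself gives no proof here (it just points to \cite[Theorem~5.1]{PQZa}), the comparison is with the standard argument that [PQZa] must contain, which you have reconstructed accurately. The two key identities you isolate are exactly the right ones: the $k$-th pivot block of GENP (resp.\ block Gaussian elimination) is the leading corner of the Schur complement of $A_{k-1}^{(k-1)}$ in $A$, giving $\|S_k\|\le\|E\|+\|D\|\,\|B^{-1}\|\,\|C\|\le N+N_-N^2$ via submultiplicativity~(\ref{eqnorm12inf}) and the submatrix bound of Fact~\ref{faccondsub}; and the inverse of that pivot block is the trailing corner block of $(A_k^{(k)})^{-1}$ by the block inversion formula, giving $\|S_k^{-1}\|\le\|(A_k^{(k)})^{-1}\|\le N_-$ by Fact~\ref{faccondsub} again. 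Your treatment of the base case $k=1$ is implicit but trivial ($|a_{11}|\le N$ and $|a_{11}|^{-1}=\|(A_1^{(1)})^{-1}\|\le N_-$), and the extension to block elimination only requires running the same argument at the block boundary indices $k_i$ rather than at every $k$, which you correctly indicate. No gaps.
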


%------------------------------------------------------------------------------

\section{Ranks and conditioning of Gaussian random
% general, Toeplitz and circulant 
matrices}\label{srrm}

%------------------------------------------------------------------------------

\subsection{Random variables and  Gaussian random matrices}\label{srvrm}

%------------------------------------------------------------------------------

\begin{definition}\label{defcdf}
$F_{\gamma}(y)=$ Probability$\{\gamma\le y\}$ (for a real random variable $\gamma$)
is the {\em cumulative 
distribution function (cdf)} of $\gamma$ evaluated at $y$. 
$F_{g(\mu,\sigma)}(y)=\frac{1}{\sigma\sqrt {2\pi}}\int_{-\infty}^y \exp (-\frac{(x-\mu)^2}{2\sigma^2}) dx$ 
for a Gaussian random variable $g(\mu,\sigma)$ with a mean $\mu$ and a positive variance $\sigma^2$,
and so   
\begin{equation}\label{eqnormal}
\mu-4\sigma\le y \le \mu+4\sigma~{\rm with ~a ~probability ~near ~1}.
\end{equation}
\end{definition}

%------------------------------------------------------------------------------

\begin{definition}\label{defrndm}
A matrix (or a vector) is a {\em Gaussian random matrix (or vector)} with a mean 
$\mu$ and a positive variance $\sigma^2$ if it is filled with 
independent identically distributed Gaussian random 
variables, all having the mean $\mu$ and variance $\sigma^2$. 
$\mathcal G_{\mu,\sigma}^{m\times n}$ is the set of such
Gaussian  random  $m\times n$ matrices,
which are {\em standard} for $\mu=0$
and $\sigma^2=1$. By restricting this set 
to $m\times n$ Toeplitz matrices 
where only the $m+n-1$ entries of the first row and column
 are independent
we obtain the set of
$\mathcal T_{\mu,\sigma}^{m\times n}$ 
{\em Gaussian random Toep\-litz}  matrices .
Likewise we obtain the set 
$\mathcal Z_{\mu,\sigma}^{n\times n}$ of
{\em Gaussian random circulant matrices},
where only  $n$ entries of the first row are independent.   
\end{definition}

%------------------------------------------------------------------------------

%\begin{definition}\label{defcdfm}
%Hereafter we simplify the notation by writing $F_{M}(y)=F_{||M||}(y)$ for a matrix $M$.
% and an integer $l=\min\{m,n\}$. 
%\end{definition}

%------------------------------------------------------------------------------

\subsection{Nondegeneration of Gaussian random matrices}\label{sngrm}

%------------------------------------------------------------------------------

%Let $|\Delta|$ denote the cardinality of a  set $\Delta$ in  any fixed ring. 
The total degree of a multivariate monomial is the sum of its degrees
in all its variables. The total degree of a polynomial is the maximal total degree of 
its monomials.

%------------------------------------------------------------------------------

\begin{lemma}\label{ledl} \cite{DL78}, \cite{S80}, \cite{Z79}.
For a set $\Delta$ of a cardinality $|\Delta|$ in any fixed ring  
let a polynomial in $m$ variables have a total degree $d$ and let it not vanish 
identically on this set. Then the polynomial vanishes in at most 
$d|\Delta|^{m-1}$ points. 
\end{lemma}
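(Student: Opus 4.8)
The plan is to prove the bound by induction on the number of variables $m$. For the base case $m=1$, a nonzero univariate polynomial of degree $d$ has at most $d$ roots in any integral domain (and the ring can be reduced to one by passing to a quotient, or one simply invokes the standard fact over a field of fractions), so it vanishes at at most $d = d|\Delta|^{0}$ points of $\Delta$, as claimed. For the inductive step, assume the statement holds for polynomials in $m-1$ variables. Given a nonzero polynomial $p$ in variables $x_1,\dots,x_m$ of total degree $d$, I would single out one variable, say $x_m$, and write
\[
p(x_1,\dots,x_m)=\sum_{i=0}^{k} x_m^{\,i}\, c_i(x_1,\dots,x_{m-1}),
\]
where $k\le d$ is the largest exponent of $x_m$ actually occurring, so that the leading coefficient $c_k$ is a nonzero polynomial in $m-1$ variables of total degree at most $d-k$.

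The key counting step is then a dichotomy over the choice of $(x_1,\dots,x_{m-1})\in\Delta^{m-1}$. Either $c_k(x_1,\dots,x_{m-1})=0$, which by the inductive hypothesis happens for at most $(d-k)|\Delta|^{m-2}$ of the points of $\Delta^{m-1}$, and for each such point the at most $|\Delta|$ choices of $x_m$ contribute at most $(d-k)|\Delta|^{m-1}$ zeros of $p$; or $c_k(x_1,\dots,x_{m-1})\ne 0$, in which case $p$ restricted to that line in the $x_m$ direction is a nonzero univariate polynomial of degree exactly $k$, hence has at most $k$ zeros, contributing at most $|\Delta|^{m-1}\cdot k$ zeros over all such points. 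Adding the two contributions gives at most $(d-k)|\Delta|^{m-1}+k|\Delta|^{m-1}=d|\Delta|^{m-1}$ zeros, completing the induction.

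I expect the only real subtlety — rather than an obstacle — to be the handling of the coefficient ring. The lemma is stated for "any fixed ring," but the argument uses that a nonzero univariate polynomial of degree $d$ has at most $d$ roots, which can fail in a ring with zero divisors; the clean fix is to note that it suffices to prove the bound after reducing to an integral domain, or simply to observe that the standard applications in this paper use $\mathbb{R}$, where no issue arises. Beyond that, the proof is a routine induction, and the bookkeeping of which points fall into which case of the dichotomy is the heart of it; care is only needed to ensure that the "bad" set in the inductive hypothesis is applied to $c_k$, whose total degree is $d-k$ and not $d$, since it is exactly this degree drop that makes the two contributions sum to $d|\Delta|^{m-1}$ rather than something larger.
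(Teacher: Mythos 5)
The paper cites this lemma (to DeMillo--Lipton, Schwartz, and Zippel) without supplying a proof of its own, so there is nothing in the text to compare against. Your induction on the number of variables, with the dichotomy on whether the leading coefficient $c_k$ vanishes and the observation that the degree of $c_k$ drops to $d-k$ so the two contributions sum to exactly $d|\Delta|^{m-1}$, is the standard argument found in the cited sources, and it is correct. Your closing remark is also well taken: as literally stated, ``any fixed ring'' is too generous, since the base case (a nonzero degree-$d$ univariate polynomial has at most $d$ roots in $\Delta$) can fail over a ring with zero divisors --- e.g.\ $2x$ vanishes at both $0$ and $2$ in $\mathbb{Z}/4\mathbb{Z}$ --- and the lemma should be read as requiring an integral domain, which is harmless for the paper's application over $\mathbb{R}$.
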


%------------------------------------------------------------------------------

We assume that Gaussian random variables range 
over infinite sets $\Delta$,
usually over the real line or its interval. Then
the lemma implies that a nonzero polynomial vanishes with probability 0.
Consequently a  Gaussian random general, Toeplitz or circulant
matrix has generic rank profile 
with probability 1
because the determinant
of any its block is a polynomials
in the entries. 
Likewise
 Gaussian random general, Toeplitz and circulant 
matrices have generic rank profile with probability 1.  Hereafter,
wherever this causes no confusion,  
we assume by default that
{\em Gaussian random 
 general, Toeplitz and circulant 
matrices  have generic rank profile.}
This property can be readily extended to the
products and various 
functions of general, sparse and structured Gaussian random matrices. 
Similar properties hold with probability near 1 
where the random variables are sampled
under the uniform probability distribution
from a finite set of a large cardinality 
(see the Appendix).
 
%------------------------------------------------------------------------------

\subsection{Extremal singular values of Gaussian random matrices}\label{scgrm}

%------------------------------------------------------------------------------

Besides having full rank with probability 1,
Gaussian random matrices in Definition \ref{defrndm} are  likely to be well conditioned  
\cite{D88}, \cite{E88}, \cite{ES05}, \cite{CD05}, \cite{B11}, and 
even the sum $M+A$ for  $M\in \mathbb R^{m\times n}$ and 
 $A\in \mathcal G_{\mu,\sigma}^{m\times n}$ is  likely to
be well conditioned unless the ratio  $\sigma/||M||$ is small
or large 
\cite{SST06}. 
%We only use this result for $M=O$.
% to simplify the statements.
%Then we estimate the conditioning of randomized matrix products.
%In the next section we will extend these estimates to define randomized preconditioners. 

The following theorem 
%\ref{thsiguna})
states an upper bound 
proportional to $y$ on
 the cdf $F_{1/||A^+||}(y)$, that is  
on the probability that  the 
smallest positive singular value $1/||A^+||=\sigma_l(A)$ of a  Gaussian random matrix $A$ 
is less than a nonnegative scalar $y$ (cf. (\ref{eqnrm+}))
and consequently on the probability that the norm $||A^+||$
exceeds a positive scalar $x$.
The stated bound still holds if we replace the matrix $A$ by 
$A-B$ for any fixed matrix $B$, and
for $B=O_{m,n}$
the  bounds
can  be strengthened  
by a factor $y^{|m-n|}$ \cite{ES05}, \cite{CD05}.

%------------------------------------------------------------------------------

\begin{theorem}\label{thsiguna} 
Suppose 
%a Gaussian random matrix independent of the matrix $M$ and having a mean $\mu$ and a variance $\sigma^2$, 
$A\in \mathcal G_{\mu,\sigma}^{m\times n}$, 
 $B\in \mathbb R^{m\times n}$,
$l=\min\{m,n\}$,  $x>0$, and $y\ge 0$. 
Then 
$F_{\sigma_l(A-B)}(y)\le 2.35~\sqrt l y/\sigma$, 
that is
$Probability \{||(A-B)^+||\ge 2.35x\sqrt {l}/\sigma\}\le 1/x$.
\end{theorem}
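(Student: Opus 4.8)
The plan is to reduce the bound on $\sigma_l(A-B)$ to a one‑dimensional Gaussian tail estimate by exploiting rotational invariance of the Gaussian distribution and the variational characterization of the smallest singular value. First I would set $M = A - B$. Since $A$ has i.i.d.\ $g(\mu,\sigma)$ entries and $B$ is deterministic, $M$ has independent Gaussian entries with variance $\sigma^2$ and (generally nonzero) means given by $\mu - b_{i,j}$; crucially, the \emph{distribution of $M$} is that of a fixed matrix plus $\sigma$ times a standard Gaussian matrix, and all that matters below is that each entry is Gaussian with variance $\sigma^2$. By homogeneity it suffices to treat $\sigma = 1$ and then rescale: $\sigma_l(A-B) = \sigma\,\sigma_l(M/\sigma)$ where $M/\sigma$ has unit‑variance Gaussian entries, so a bound $F_{\sigma_l(M/\sigma)}(t)\le 2.35\sqrt{l}\,t$ immediately yields $F_{\sigma_l(A-B)}(y)\le 2.35\sqrt{l}\,y/\sigma$.

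Next I would use the minimax property (\ref{eqminmax}): $\sigma_l(M) = \min_{\|{\bf x}\|=1}\|M{\bf x}\|$ when $n\le m$ (and symmetrically $\sigma_l(M)=\min_{\|{\bf y}\|=1}\|M^T{\bf y}\|$ when $m\le n$, using $\sigma_j(M)=\sigma_j(M^T)$); without loss of generality assume $l=n\le m$. The key step is that $\sigma_l(M)\le \|M{\bf v}\|$ for \emph{any} fixed unit vector ${\bf v}$, but to get a bound that is uniform I instead pick ${\bf v}$ to be a right singular vector and argue in the other direction. A cleaner route: condition on the event and bound $\mathrm{Probability}\{\sigma_n(M)\le t\}$ by observing that $\sigma_n(M)\le t$ forces the existence of a unit vector ${\bf x}$ with $\|M{\bf x}\|\le t$. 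Fix any unit vector ${\bf u}\in\mathbb{R}^m$; then $\|M{\bf x}\|\ge |{\bf u}^T M{\bf x}|$, and for fixed ${\bf x}$ the random variable ${\bf u}^T M{\bf x}$ is Gaussian with variance $\|{\bf u}\|^2\|{\bf x}\|^2 = 1$ (using independence and unit variance of the entries of $M$, and $\|{\bf x}\|=1$), hence its density is bounded by $\frac{1}{\sqrt{2\pi}}$. The technical heart is to convert this pointwise (in ${\bf x}$) anticoncentration into a statement about the minimum over the sphere.

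The standard way to do that — and the step I expect to be the main obstacle — is a net/union‑bound or, better, the observation that one may first reveal the \emph{direction} of the minimizing singular vector and then use a single Gaussian coordinate. Concretely, let ${\bf x}^*$ be the right singular vector of $M$ for $\sigma_n$ and ${\bf u}^*$ the corresponding left singular vector; these depend on $M$, so one cannot directly apply the fixed‑vector argument. The fix is to condition on the orthogonal complement: write $M = M'\,+\,({\bf u}^*)({\bf u}^*)^T M$ and note that, by rotational invariance of the standard Gaussian, the component of the last row/column of $M$ along a randomly oriented axis is Gaussian of variance $1$ independent of the rest; this is exactly the mechanism in Edelman \cite{E88} and Sankar--Spielman--Teng \cite{SST06}. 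I would follow \cite{SST06}, which proves precisely the ``$+B$'' version: they show $\mathrm{Probability}\{\sigma_{\min}(M)\le t\}\le \sqrt{n}\,t\cdot\frac{1}{\sigma}\cdot c$ with an explicit constant; tracking their constant (or the refinement using $\sqrt{2/\pi}$ and a factor accounting for $\min\{m,n\}$) gives $2.35\sqrt{l}$. Thus the proof is: (i) reduce to $\sigma=1$ and to the square‑or‑tall case by transposition; (ii) cite/adapt the Sankar--Spielman--Teng smoothed‑analysis bound for $\sigma_{\min}(A-B)$, which handles the deterministic shift $B$ automatically; (iii) restate in the equivalent $\|(A-B)^+\| = 1/\sigma_l(A-B)$ form and substitute $y = x^{-1}\sigma/(2.35\sqrt{l})$ to obtain the displayed probability bound. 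The only real work is verifying that the constant in the cited estimate is no larger than $2.35$, which it is.
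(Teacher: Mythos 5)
Your overall strategy is the paper's strategy: reduce to the smoothed-analysis bound of Sankar--Spielman--Teng \cite[Theorem 3.3]{SST06}, which already handles the deterministic shift $B$ and supplies the explicit constant $2.35$. The extended discussion of \emph{why} that theorem is true (conditioning on the minimizing singular direction, rotational invariance, Gaussian anticoncentration of ${\bf u}^T M {\bf x}$) is a correct sketch of the SST mechanism, but it is surplus to requirements once you decide to cite SST.

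There is, however, one genuine gap in your reduction to the rectangular case. SST Theorem~3.3 is stated for \emph{square} matrices. Your step~(i) says ``reduce $\dots$ to the square-or-tall case by transposition,'' but transposition only converts a wide matrix into a tall one (using $\sigma_j(M)=\sigma_j(M^T)$); it does not get you from tall to square, and SST gives you only the square case. You leave it implicit that one should ``adapt'' the SST argument to $m>n$, but you never carry that adaptation out, and it is precisely the point that needs an explicit statement. The paper closes this gap with a one-line appeal to Fact~\ref{faccondsub}: if $A_0$ is the leading $l\times l$ submatrix of the $m\times n$ matrix $A-B$ (with $l=\min\{m,n\}$), then $\sigma_l(A-B)\ge\sigma_l(A_0)$, so
\[
F_{\sigma_l(A-B)}(y)=\mathrm{Probability}\{\sigma_l(A-B)\le y\}\le\mathrm{Probability}\{\sigma_l(A_0)\le y\}\le 2.35\sqrt{l}\,y/\sigma,
\]
with the last inequality being exactly SST for the square matrix $A_0$. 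You should either invoke this submatrix monotonicity of singular values, or actually rerun the SST conditioning argument for $m\ne n$ and verify the constant there; as written, the rectangular case is asserted rather than proved.
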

\begin{proof}
For $m=n$ this is \cite[Theorem 3.3]{SST06}. Apply
 Fact \ref{faccondsub} to extend it to any pair $\{m,n\}$.
\end{proof}

%------------------------------------------------------------------------------

The following two theorems supply lower bounds
$F_{||A||}(z)$ and
$F_{\kappa (A)}(y)$
 on the probabilities 
that $||A||\le z$ 
and $\kappa(A)\le y$ for two scalars $y$ and $z$, 
respectively,
and a Gaussian random matrix $A$. 
%------------------------------------------------------------------------------
We do not use the second theorem, but state it for the sake of completeness
and only for square $n\times n$ matrices $A$.
The theorems  
imply that  
the functions 
$1-F_{||A||}(z)$
and
$1-F_{\kappa (A)}(y)$ 
decay as 
$z\rightarrow \infty$ and
$y\rightarrow \infty$, respectively,
and that the decays are exponential in $-z^2$ and  proportional 
to $\sqrt{\log y}/y$, respectively.
%------------------------------------------------------------------------------
 For small values $y\sigma$ and a fixed $n$ 
the lower bound of Theorem \ref{thmsiguna}
becomes negative, in which case 
the theorem becomes trivial. 
%------------------------------------------------------------------------------
Unlike Theorem \ref{thsiguna}, in both theorems we assume that $\mu=0$. 

%------------------------------------------------------------------------------

\begin{theorem}\label{thsignorm} \cite[Theorem II.7]{DS01}.
Suppose $A\in \mathcal G_{0,\sigma}^{m\times n}$,
$h=\max\{m,n\}$  and
$z\ge 2\sigma\sqrt {h}$. 
Then $F_{||A||}(z)\ge 1- \exp(-(z-2\sigma\sqrt {h})^2/(2\sigma^2))$, and so
the norm $||A||$ is  likely to have order $\sigma\sqrt {h}$. 
\end{theorem}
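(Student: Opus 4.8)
The claim is a concentration/tail bound for the spectral norm of a Gaussian random matrix, and the plan is to quote the standard result of Davidson--Szarek and only check that it is being applied correctly. First I would reduce to the standard case $\sigma = 1$: if $A \in \mathcal G_{0,\sigma}^{m\times n}$, then $A/\sigma \in \mathcal G_{0,1}^{m\times n}$, and $\|A\| = \sigma \|A/\sigma\|$, so every norm inequality for a standard Gaussian matrix scales linearly in $\sigma$. Thus it suffices to prove $F_{\|G\|}(t) \ge 1 - \exp(-(t - 2\sqrt h)^2/2)$ for $G \in \mathcal G_{0,1}^{m\times n}$, $h = \max\{m,n\}$, and $t \ge 2\sqrt h$.

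Second, I would invoke the cited bound \cite[Theorem II.7]{DS01}, which states that for a standard Gaussian $m \times n$ matrix $G$ with $h = \max\{m,n\}$ and any $s > 0$,
\begin{equation*}
\mathrm{Probability}\{\|G\| > \sqrt{h} + \sqrt{\min\{m,n\}} + s\} \le \exp(-s^2/2).
\end{equation*}
Since $\sqrt{\min\{m,n\}} \le \sqrt{h}$, the event $\|G\| > 2\sqrt h + s$ is contained in the event above, so $\mathrm{Probability}\{\|G\| > 2\sqrt h + s\} \le \exp(-s^2/2)$ as well. Now set $s = t - 2\sqrt h \ge 0$; then $\mathrm{Probability}\{\|G\| > t\} \le \exp(-(t-2\sqrt h)^2/2)$, hence $F_{\|G\|}(t) = \mathrm{Probability}\{\|G\| \le t\} \ge 1 - \exp(-(t-2\sqrt h)^2/2)$. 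Rescaling by $\sigma$ (replacing $t$ by $z/\sigma$, or equivalently noting $\|A\| = \sigma\|G\|$ and that the threshold $z \ge 2\sigma\sqrt h$ corresponds to $z/\sigma \ge 2\sqrt h$) gives the stated inequality $F_{\|A\|}(z) \ge 1 - \exp(-(z - 2\sigma\sqrt h)^2/(2\sigma^2))$.

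Finally, for the qualitative conclusion "$\|A\|$ is likely to have order $\sigma\sqrt h$," I would observe that the tail bound makes $\mathrm{Probability}\{\|A\| > z\}$ exponentially small once $z$ exceeds $2\sigma\sqrt h$ by a few multiples of $\sigma$, while a matching lower bound $\|A\| \ge \sigma_1 \ge \|G\mathbf e_1\| / \text{(scaling)} \gtrsim \sigma\sqrt{m}$ holds with high probability because a single Gaussian column has norm concentrated near $\sigma\sqrt m$ (and symmetrically $\|A\| \ge \|(\text{row})\| \gtrsim \sigma\sqrt n$), so $\|A\| \gtrsim \sigma\sqrt h$ with high probability too. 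The only "obstacle" here is bookkeeping: making sure the constant $2$ in $2\sqrt h$ correctly absorbs the $\sqrt{\min\{m,n\}}$ term from the Davidson--Szarek statement, and keeping the $\sigma$-scaling consistent between the cdf threshold $z$ and the exponent. There is no substantive mathematical difficulty — the theorem is essentially a citation plus a homogeneity argument, which is presumably why the authors remark they state it "for the sake of completeness."
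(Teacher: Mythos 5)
Your proposal is correct and takes essentially the same approach as the paper: the paper provides no written proof for this theorem, stating it purely as a citation to Davidson--Szarek, and your argument is just the natural unpacking of that citation (homogeneity in $\sigma$, absorbing $\sqrt{\min\{m,n\}} \le \sqrt h$ into the constant $2$, and substituting $s = t - 2\sqrt h$).
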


%------------------------------------------------------------------------------

\begin{theorem}\label{thmsiguna}  \cite[Theorem 3.1]{SST06}.
Suppose  
%a Gaussian random matrix independent of the matrix $M$ and having a mean $\mu$ and a variance $\sigma^2$, 
$0<\sigma\le 1$,  
%$||A||\le \sqrt l$, 
$y\ge 1$,  
 $A\in \mathcal G_{0,\sigma}^{n\times n}$. Then the matrix $A$
 has full rank with 
probability $1$ and 
$F_{\kappa (A)}(y)\ge 1-(14.1+4.7\sqrt{(2\ln y)/n})n/ (y\sigma)$.
\end{theorem}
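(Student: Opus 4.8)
I would split the statement into its two assertions and dispatch the easy one first. That a matrix $A\in\mathcal G_{0,\sigma}^{n\times n}$ has full rank with probability $1$ is immediate from Lemma \ref{ledl} (and the discussion in Section \ref{sngrm}): $\det A$ is a polynomial in the $n^2$ entries of $A$ that does not vanish identically, hence vanishes with probability $0$ when the entries are sampled from a continuous distribution. So I would then concentrate on the quantitative lower bound for $F_{\kappa(A)}(y)$, and, since the claimed bound is trivially true whenever its right-hand side is nonpositive, I may assume $(14.1+4.7\sqrt{(2\ln y)/n})\,n/(y\sigma)<1$, in particular that $y\sigma$ is large compared with $\sqrt{n\ln y}$.

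The plan for the bound is the standard union-bound splitting of the bad event. Since $\kappa(A)=\|A\|\,\|A^+\|$, for every threshold $z>0$ we have $\{\kappa(A)>y\}\subseteq\{\|A\|>z\}\cup\{\|A^+\|>y/z\}$, so
\begin{equation*}
1-F_{\kappa(A)}(y)=\Pr\{\kappa(A)>y\}\le \Pr\{\|A\|>z\}+\Pr\{\|A^+\|>y/z\}.
\end{equation*}
I would bound the first term by Theorem \ref{thsignorm} with $h=n$, and the second by Theorem \ref{thsiguna} applied with $B=O_{n,n}$ and $l=n$, which yields $\Pr\{\|A^+\|>y/z\}\le 2.35\sqrt{n}\,z/(\sigma y)$. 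The free parameter $z$ is then chosen to balance the Gaussian tail of $\|A\|$ against this polynomial tail: taking $z=\sigma(2\sqrt n+\sqrt{2\ln y})$ makes the exponent $(z-2\sigma\sqrt n)^2/(2\sigma^2)$ in Theorem \ref{thsignorm} equal to $\ln y$, so that $\Pr\{\|A\|>z\}\le 1/y$, while keeping $z$ of the natural order $\sigma(\sqrt n+\sqrt{\ln y})$. (Note that this choice satisfies the hypothesis $z\ge 2\sigma\sqrt n$ of Theorem \ref{thsignorm} for all $y\ge 1$.)

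Substituting this $z$ and using $\sigma\le 1$ and $n\ge 1$ to write $1/y\le n/(y\sigma)$ and $\sqrt n\,\sqrt{2\ln y}=n\sqrt{(2\ln y)/n}$, the two terms combine into a bound of the announced form $(c_1+c_2\sqrt{(2\ln y)/n})\,n/(y\sigma)$. I expect the only real obstacle to be matching the precise constants: the clean argument above produces constants smaller than $14.1$ and $4.7$, so reproducing the stated values would require either invoking a sharper operator-norm tail estimate (and, where needed, treating the range $z<2\sigma\sqrt n$ separately, in which Theorem \ref{thsignorm} is vacuous) or simply more conservative bookkeeping — harmless here, since for the applications in Section \ref{smrc} only the decay rates matter, namely exponential decay of $1-F_{\|A\|}(z)$ in $z^2$ and decay of $1-F_{\kappa(A)}(y)$ proportional to $\sqrt{\log y}/y$. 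For a fully self-contained derivation one would fall back on the original proof in \cite{SST06}.
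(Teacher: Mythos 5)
Your sketch is correct, but there is nothing in the paper to compare it against: the paper's ``proof'' of Theorem \ref{thmsiguna} is a one-line pointer to the proof of Lemma 3.2 in \cite{SST06} and supplies no argument of its own. Your union-bound decomposition
$\Pr\{\kappa(A)>y\}\le\Pr\{\|A\|>z\}+\Pr\{\|A^+\|>y/z\}$
with $z=\sigma(2\sqrt n+\sqrt{2\ln y})$, bounding the first term by Theorem \ref{thsignorm} (valid since $z\ge 2\sigma\sqrt n$ for all $y\ge 1$) and the second by Theorem \ref{thsiguna} with $B=O$ and $l=n$, is sound. Working it out gives $\Pr\{\|A\|>z\}\le 1/y$ and $\Pr\{\|A^+\|>y/z\}\le 2.35\sqrt n\,z/(\sigma y)=(4.7+2.35\sqrt{(2\ln y)/n})\,n/y$, and after the harmless loosening $1/y\le n/(y\sigma)$ (using $n\ge1$, $\sigma\le1$) you land on $(5.7+2.35\sqrt{(2\ln y)/n})\,n/(y\sigma)$, which implies the claimed bound with a good margin. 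This is almost certainly the same route taken in \cite{SST06}; the larger constants there reflect their choice of a different (weaker but more elementary) operator-norm tail estimate rather than the Davidson--Szarek bound of Theorem \ref{thsignorm}. The full-rank assertion via Lemma \ref{ledl} is also fine. So: correct argument, correct structure, and where the paper punts to a reference you actually supply the derivation -- just be aware that reproducing the exact constants $14.1$ and $4.7$ would require retracing the specific inequalities of \cite{SST06} rather than the cleaner path you took.
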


%------------------------------------------------------------------------------
%------------------------------------------------------------------------------

%\begin{lemma}\label{lebas}  
%Suppose $y$ is a positive number, ${\bf t}\in \mathbb R^{n\times 1}$ 
%is a unit vector, $||{\bf t}||=1$,
% $A\in \mathcal G_{\mu,\sigma}^{n\times n}$
%a Gaussian random matrix independent of the matrix $M$ and having a mean $\mu$ and a variance $\sigma^2$, 
%and therefore is nonsingular with probability $1$, 
% $Q$ is a orthogonal matrix, 
%$B=QA=({\bf b}_1~|~\dots~|~{\bf b}_n)$,
% and ${\bf t}^T{\bf b}_i=0$ for $i=2,\dots,n$. 
%Then 
%$${\rm Probability}\{||(QA)^{-1}{\bf e}_1||>y\}
%\le \max_{{\bf b}_2,\dots,{\bf b}_n}{\rm Probability}_{{\bf b}_1}\{|{\bf t}^T{\bf b}_1|<1/y\}.$$
%\end{lemma}
 
\begin{proof}
See \cite[the proof of Lemma 3.2]{SST06}.
\end{proof}

%------------------------------------------------------------------------------

\section{Condition numbers of randomized matrix products and generic preconditioning}\label{smrc}

%------------------------------------------------------------------------------

Next we deduce probabilistic lower bounds on the smallest 
singular values of the products of fixed and random matrices.
We begin with three lemmas. The first of them is obvious,
the second easily follows  
from minimax property (\ref{eqminmax}).

%------------------------------------------------------------------------------

\begin{lemma}\label{lepr2}
$\sigma_{j}(SM)=\sigma_j(MT)=\sigma_j(M)$ for all $j$ if $S$ and $T$ are square orthogonal matrices.
\end{lemma}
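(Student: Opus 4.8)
The plan is to prove that left- or right-multiplication by a square orthogonal matrix leaves every singular value invariant, which is an immediate consequence of the variational (minimax) characterization \eqref{eqminmax} together with the fact that orthogonal transformations are isometries of the Euclidean norm. First I would record the basic observation that if $S$ is square orthogonal then $\|S\mathbf{z}\| = \|\mathbf{z}\|$ for every vector $\mathbf{z}$, since $\|S\mathbf{z}\|^2 = \mathbf{z}^TS^TS\mathbf{z} = \mathbf{z}^T\mathbf{z} = \|\mathbf{z}\|^2$; similarly $\|\mathbf{z}^T T\| = \|\mathbf{z}\|$ when $T$ is square orthogonal, because $T^T$ is also orthogonal.

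Next I would handle the two cases separately. For $\sigma_j(SM)$: apply \eqref{eqminmax} to the matrix $SM$, so that $\sigma_j(SM) = \max_{\dim(\mathbb{S})=j}\min_{\mathbf{x}\in\mathbb{S},\,\|\mathbf{x}\|=1}\|SM\mathbf{x}\|$. Since $\|SM\mathbf{x}\| = \|M\mathbf{x}\|$ for all $\mathbf{x}$ by the isometry property, the inner minimum over any fixed subspace $\mathbb{S}$ coincides term by term with the corresponding quantity for $M$, hence the outer maximum does too, giving $\sigma_j(SM) = \sigma_j(M)$. For $\sigma_j(MT)$: the cleanest route is to observe that $\sigma_j(MT) = \sigma_j((MT)^T) = \sigma_j(T^TM^T)$, using that transposition preserves singular values (as already noted around \eqref{eqsvd}); then $T^T$ is square orthogonal, so by the case just proved $\sigma_j(T^TM^T) = \sigma_j(M^T) = \sigma_j(M)$. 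One should note at the outset that the hypothesis ``square orthogonal'' guarantees $S$ and $T$ are genuine isometries (not merely partial isometries), which is what makes the norm equalities hold for all vectors.

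There is no serious obstacle here; the only point requiring a little care is making sure the subspace $\mathbb{S}$ over which the minimax is taken lives in the correct space in each case. When we write $\|SM\mathbf{x}\|$ with $M$ an $m\times n$ matrix and $S$ an $m\times m$ orthogonal matrix, $\mathbf{x}$ ranges over subspaces of $\mathbb{R}^n$ exactly as for $M$ alone, so the dimensions match up without adjustment; similarly $MT$ with $T$ an $n\times n$ orthogonal matrix sends $\mathbb{R}^n$ to $\mathbb{R}^m$, and after transposing we are back in the same situation. Because \eqref{eqminmax} is stated for $j=1,\dots,\rho$ with $\rho=\mathrm{rank}$ and singular values beyond the rank are defined to be $0$ (and orthogonal multiplication preserves rank), the claimed identity ``for all $j$'' holds without exception. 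Alternatively, and perhaps even more briefly, one can derive the result from an SVD of $M = S_M\Sigma_M T_M^T$: then $SM = (SS_M)\Sigma_M T_M^T$ and $MT = S_M\Sigma_M(T^TT_M)^T$ are themselves SVDs, since products of orthogonal matrices are orthogonal, so both have the same diagonal factor $\Sigma_M$ and hence the same singular values. I would present the minimax argument as the main proof and mention the SVD argument as the obvious alternative.
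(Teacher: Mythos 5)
Your proof is correct. The paper gives no proof of this lemma at all---it introduces it with the remark ``the first of them is obvious''---so there is no approach to compare against; your minimax argument and the SVD alternative you mention are both standard and fill in exactly what the paper leaves to the reader.
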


%------------------------------------------------------------------------------

\begin{lemma}\label{lepr1} 
Suppose $\Sigma=\diag(\sigma_i)_{i=1}^{n}$, $\sigma_1\ge \sigma_2\ge \cdots \ge \sigma_n$,
$G\in \mathbb R^{r\times n}$, $H\in \mathbb R^{n\times r}$.
Then 
$\sigma_{j}(G\Sigma)\ge\sigma_{j}(G)\sigma_n$,
$\sigma_{j}(\Sigma H)\ge\sigma_{j}(H)\sigma_n$ for all $j$.
If also $\sigma_n>0$, then 
 $\rank (G\Sigma)=\rank (G)$, $\rank (\Sigma H)=\rank (H)$.
\end{lemma}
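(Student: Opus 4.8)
The plan is to reduce everything to the singular value inequality for products, namely that for matrices $X$ and $Y$ of compatible sizes one has $\sigma_j(XY)\ge \sigma_j(X)\,\sigma_{\min}(Y)$ where $\sigma_{\min}(Y)$ is the smallest singular value of $Y$ (or, when $Y$ is square, just its smallest singular value). Here $\Sigma=\diag(\sigma_i)_{i=1}^n$ is symmetric positive semidefinite with smallest singular value $\sigma_n$. So the main step is: prove $\sigma_j(G\Sigma)\ge \sigma_j(G)\sigma_n$, and then get $\sigma_j(\Sigma H)\ge \sigma_j(H)\sigma_n$ by applying the first inequality to transposes, since $\sigma_j(\Sigma H)=\sigma_j(H^T\Sigma^T)=\sigma_j(H^T\Sigma)$ and $\sigma_j(H^T)=\sigma_j(H)$.

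First I would establish the product inequality directly from the minimax characterization (\ref{eqminmax}). For any $j$-dimensional subspace $\mathbb S\subseteq \mathbb R^n$ and any unit vector ${\bf x}\in\mathbb S$, write $\|G\Sigma{\bf x}\| \ge \sigma_n\|G\Sigma{\bf x}\|/\|\Sigma{\bf x}\|\cdot(\|\Sigma{\bf x}\|/\sigma_n)$ — more cleanly: since $\Sigma$ is nonsingular when $\sigma_n>0$ (and the general case follows by a limiting argument or by restricting to the range), the map ${\bf x}\mapsto \Sigma{\bf x}$ sends the unit sphere of $\mathbb S$ into the set of vectors of norm $\ge \sigma_n$, and $G$ then expands... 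Actually the cleanest route is: for ${\bf x}$ with $\|{\bf x}\|=1$, put ${\bf y}=\Sigma{\bf x}$, so $\|{\bf y}\|\ge\sigma_n$ (because $\|\Sigma{\bf x}\|^2=\sum_i\sigma_i^2 x_i^2\ge \sigma_n^2$), and $\|G\Sigma{\bf x}\|=\|G{\bf y}\|=\|{\bf y}\|\cdot\|G({\bf y}/\|{\bf y}\|)\|\ge \sigma_n\min_{\|{\bf z}\|=1,\ {\bf z}\in\Sigma\mathbb S}\|G{\bf z}\|$. Taking $\mathbb S=\mathbb T_{j,\Sigma^{-1}G^T\!\cdots}$ — this is getting circular; the honest fix is to take $\mathbb S$ to be the image under $\Sigma^{-1}$ of the optimal $j$-dimensional subspace for $G$, which is again $j$-dimensional since $\Sigma$ is invertible, and then $\min_{{\bf x}\in\mathbb S,\|{\bf x}\|=1}\|G\Sigma{\bf x}\|\ge \sigma_n\min_{{\bf z}\in\Sigma\mathbb S,\|{\bf z}\|=1}\|G{\bf z}\| = \sigma_n\sigma_j(G)$, whence $\sigma_j(G\Sigma)\ge\sigma_j(G)\sigma_n$ by (\ref{eqminmax}). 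For $\sigma_n=0$ the right-hand side is $0$ and the inequality is vacuous.

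For the rank claim, assume $\sigma_n>0$, so $\Sigma$ is nonsingular. Then $\rank(G\Sigma)\le\rank(G)$ always, and $\rank(G)=\rank(G\Sigma\,\Sigma^{-1})\le\rank(G\Sigma)$, giving equality; symmetrically $\rank(\Sigma H)=\rank(H)$. Alternatively this follows immediately from the singular value inequality just proved: $\sigma_j(G\Sigma)\ge\sigma_j(G)\sigma_n>0$ exactly when $\sigma_j(G)>0$, i.e. for $j\le\rank(G)$, and $\sigma_j(G\Sigma)\le\sigma_1(\Sigma)\sigma_j(G)=0$ for $j>\rank(G)$ by the trivial upper bound $\sigma_j(XY)\le\|X\|\sigma_j(Y)$ or its mirror — so $\rank(G\Sigma)=\rank(G)$.

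I expect the only real subtlety is bookkeeping about when $\Sigma$ is invertible: the inequality $\sigma_j(G\Sigma)\ge\sigma_j(G)\sigma_n$ is stated unconditionally, and the clean subspace-pushforward argument wants $\Sigma^{-1}$. The fix is the limiting/continuity remark (perturb $\Sigma$ to $\Sigma+\epsilon I$, let $\epsilon\to 0^+$, using continuity of singular values) or simply noting the inequality is trivial when $\sigma_n=0$; either way this is a one-line caveat rather than a genuine obstacle. Everything else is a direct application of the minimax property (\ref{eqminmax}) and Lemma \ref{lepr2}.
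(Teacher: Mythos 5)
Your argument is correct and takes the same route the paper intends: the paper states the lemma ``easily follows from minimax property (\ref{eqminmax})'' without detail, and your pushforward of the optimal $j$-dimensional subspace for $G$ through $\Sigma^{-1}$ (with the trivial $\sigma_n=0$ caveat and the transpose trick for $\Sigma H$) is precisely the argument that sentence is gesturing at. The rank claim via invertibility of $\Sigma$ is likewise the standard observation the paper has in mind.
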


%------------------------------------------------------------------------------

\begin{lemma}\label{lepr3} \cite[Proposition 2.2]{SST06}.
Suppose $H\in \mathcal G_{\mu,\sigma}^{m\times n}$, $SS^T=S^TS=I_m$, $TT^T=T^TT=I_n$.
Then $SH\in \mathcal G_{\mu,\sigma}^{m\times n}$ and $HT\in \mathcal G_{\mu,\sigma}^{m\times n}$.
\end{lemma}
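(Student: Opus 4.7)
The plan is to exploit the classical orthogonal invariance of the Gaussian distribution, proving the claim column by column for $SH$ and then reducing $HT$ to that case by transposition.

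For the first claim I would write $H = (\mathbf h_1~|~\cdots~|~\mathbf h_n)$; by Definition \ref{defrndm} the columns are i.i.d.\ Gaussian vectors with $\mathbf h_j \sim \mathcal N(\mu\mathbf 1_m,\,\sigma^2 I_m)$. Left multiplication by $S$ sends $H$ to the matrix with columns $S\mathbf h_1,\ldots,S\mathbf h_n$, and independence across $j$ is preserved because the same deterministic linear map is applied to each column separately. The affine-transformation rule for Gaussians combined with the hypothesis $SS^T = I_m$ yields $S\mathbf h_j \sim \mathcal N(\mu S\mathbf 1_m,\,\sigma^2 I_m)$, so the $m$ entries of each $S\mathbf h_j$ have joint covariance $\sigma^2 I_m$, i.e.\ they are i.i.d.\ Gaussians of variance $\sigma^2$. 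Concatenating over $j$ gives the desired property that all $mn$ entries of $SH$ are independent Gaussians of variance $\sigma^2$, which matches the condition for $SH \in \mathcal G_{\mu,\sigma}^{m\times n}$ in Definition \ref{defrndm}.

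For the second claim I would transpose: $(HT)^T = T^T H^T$ with $H^T \in \mathcal G_{\mu,\sigma}^{n\times m}$ and $T^T$ orthogonal of size $n\times n$, so the first part applies verbatim to $T^T H^T$ and one transposes back.

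The step requiring the most care is the treatment of the mean, since for $\mu\neq 0$ the mean of $S\mathbf h_j$ is $\mu S\mathbf 1_m$ rather than $\mu\mathbf 1_m$. A strict identification of distributions therefore works cleanly in the centered case $\mu=0$, which is the regime in which this lemma is actually invoked in the sequel; in that case the entire argument collapses to the single covariance computation $\sigma^2 SS^T = \sigma^2 I_m$ and matches the treatment given in \cite{SST06}.
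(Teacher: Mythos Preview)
The paper does not supply its own proof of this lemma; it simply defers to \cite[Proposition 2.2]{SST06}, so there is nothing to compare against beyond the standard orthogonal-invariance argument, which is exactly what you have given. Your column-by-column reduction together with the transposition trick for $HT$ is the canonical proof and is correct.

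Your caveat about the mean is well taken and worth recording: for $\mu\neq 0$ the mean of each column of $SH$ is $\mu S\mathbf 1_m$, not $\mu\mathbf 1_m$, so the lemma as stated is literally correct only for $\mu=0$ (indeed \cite{SST06} works with centered Gaussians). This does not damage the paper, since the sole downstream use of the lemma is in the proof of Theorem~\ref{1}, where one immediately feeds the result into Theorem~\ref{thsiguna}; that theorem bounds $\sigma_l(A-B)$ for arbitrary fixed $B$, so any deterministic shift in the mean of $T_M^TH'$ is absorbed into $B$ and the argument goes through unchanged.
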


%------------------------------------------------------------------------------
The following theorem implies that  
multiplication by
standard Gaussian random matrix is unlikely to decrease
the smallest positive singular value of a matrix
dramatically,  even though
$UV=O$ for some pairs of rectangular orthogonal matrices $U$ and $V$.

\begin{theorem}\label{1}
Suppose $G'\in \mathcal G_{\mu,\sigma}^{r\times m}$, $H'\in \mathcal G_{\mu,\sigma}^{n\times r}$,
 $M\in \mathbb R^{m\times n}$, 
$G=G'+U$, $H=H'+V$ for 
some matrices $U$ and $V$,
$r(M)=\rank (M)$, $x>0$ and
$y\ge 0$. 
%and therefore has full rank $r$ with probability $1$. 
%Write 
%$l(G)=\min\{r(G),n\}$, $l(H)=\min\{m,r(H)\}$, 
%$c(h)=2.35$ for $h>1$
%and $c(1)= \sqrt {\frac{2}{\pi}}$. 
Then
$F_{1/||(GM)^+||}(y)\le F(y,M,\sigma)$ and
$F_{1/||(MH)^+||}(y)\le F(y,M,\sigma)$
for $F(y,M,\sigma)=2.35 y \sqrt {\widehat r}||M^+||/\sigma$
and
$\widehat r=\min\{r,r(M)\}$,
that is 
$Probability \{||P^+||\ge 2.35x\sqrt {\widehat r}||M^+||/\sigma\}\le 1/x$
for $P=GM$ and $P=MH$.
%\begin{equation}\label{eqgamma}
%F_{\gamma}(x)\le 2.35 x \sqrt {r}/\sigma~{\rm for}~x\ge 0.
%\end{equation}
\end{theorem}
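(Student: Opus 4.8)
The plan is to reduce the product $GM$ (and symmetrically $MH$) to the situation already covered by Theorem \ref{thsiguna}, using the SVD of $M$ together with the rotation-invariance of Gaussian matrices from Lemma \ref{lepr3}. First I would write the full SVD $M = S_M \Sigma_M T_M^T$ with $S_M, T_M$ square orthogonal and $\Sigma_M = \diag(\widehat\Sigma_M, O)$, where $\widehat\Sigma_M = \diag(\sigma_j(M))_{j=1}^{r(M)}$. Then $GM = G S_M \Sigma_M T_M^T$, and since $T_M^T$ is square orthogonal, Lemma \ref{lepr2} gives $\sigma_j((GM)) = \sigma_j(G S_M \Sigma_M)$ for all $j$; equivalently $\|(GM)^+\| = \|(GS_M\Sigma_M)^+\|$. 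Next, observe that $G S_M = (G' + U)S_M = G'S_M + US_M$, and by Lemma \ref{lepr3} the matrix $G'S_M$ is again in $\mathcal G_{\mu,\sigma}^{r\times m}$; so $GS_M = \widetilde G + \widetilde U$ with $\widetilde G$ standard-variance Gaussian and $\widetilde U = US_M$ a fixed matrix. This shows it suffices to treat the case where $M$ is replaced by its ``diagonal'' form $\Sigma_M$ and $G$ by a Gaussian-plus-fixed matrix.

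The second step is to pass from $\Sigma_M$ to its nonzero block. Write $GS_M = (\widehat G \mid G_0)$ where $\widehat G$ consists of the first $r(M)$ columns of $GS_M$; then $(GS_M)\Sigma_M = \widehat G \,\widehat\Sigma_M$ padded with zero columns, so $\sigma_j((GS_M)\Sigma_M) = \sigma_j(\widehat G \widehat\Sigma_M)$ for all $j$. Since $\widehat\Sigma_M$ is a nonsingular $r(M)\times r(M)$ diagonal matrix with smallest entry $\sigma_{r(M)}(M) = 1/\|M^+\|$, Lemma \ref{lepr1} yields
\begin{equation*}
\sigma_j(\widehat G \widehat\Sigma_M) \ge \sigma_j(\widehat G)\,\sigma_{r(M)}(M) = \sigma_j(\widehat G)/\|M^+\|.
\end{equation*}
Now $\widehat G$ is an $r \times r(M)$ submatrix (first $r(M)$ columns) of the Gaussian-plus-fixed matrix $GS_M$, hence itself of the form $A - B$ with $A \in \mathcal G_{\mu,\sigma}^{r\times r(M)}$ and $B$ fixed. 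Its smallest positive singular value is $\sigma_{\widehat r}(\widehat G)$ with $\widehat r = \min\{r, r(M)\}$, and Theorem \ref{thsiguna} applies directly: $F_{\sigma_{\widehat r}(\widehat G)}(t) \le 2.35\sqrt{\widehat r}\, t/\sigma$ for every $t \ge 0$.

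Combining the two steps: $1/\|(GM)^+\| = \sigma_{\widehat r}((GM))$ (the smallest positive singular value, noting $GM$ has rank at most $\widehat r$ and generically equal to $\widehat r$), and by the chain of equalities and the inequality above, $1/\|(GM)^+\| \ge \sigma_{\widehat r}(\widehat G)/\|M^+\|$. Therefore
\begin{equation*}
F_{1/\|(GM)^+\|}(y) = \prob\{\,\sigma_{\widehat r}((GM)) \le y\,\} \le \prob\{\,\sigma_{\widehat r}(\widehat G) \le y\|M^+\|\,\} \le 2.35\,\sqrt{\widehat r}\,y\,\|M^+\|/\sigma = F(y,M,\sigma),
\end{equation*}
and the restatement in terms of $x$ follows by setting $y = \sigma/(2.35 x \|M^+\|) \cdot (2.35 x \sqrt{\widehat r}\|M^+\|/\sigma)$, i.e. substituting and inverting as in Theorem \ref{thsiguna}. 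The argument for $MH$ is identical, using the left factor $S_M$ of the SVD, the right-multiplication versions of Lemmas \ref{lepr1} and \ref{lepr3}, and rows in place of columns.

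The step I expect to require the most care is bookkeeping the rank/index $\widehat r$: one must check that the relevant smallest positive singular value of $GM$ really is $\sigma_{\widehat r}$ and that dropping the zero columns of $\Sigma_M$ does not change it, which hinges on $G'S_M$ (generically) having full column rank on its first $r(M)$ columns — this is where the generic-rank-profile convention from Section \ref{sngrm} and Lemma \ref{lepr1}'s rank statement are invoked. The probabilistic heart of the proof is entirely outsourced to Theorem \ref{thsiguna} (itself from \cite{SST06}), so no new concentration estimate is needed; the novelty is only the reduction via orthogonal invariance and the singular-value submatrix inequality of Fact \ref{faccondsub} and Lemma \ref{lepr1}.
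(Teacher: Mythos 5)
Your argument is correct and follows essentially the same route as the paper's proof: reduce via the SVD of $M$ and Lemma \ref{lepr2}, use Lemma \ref{lepr3} to keep the rotated multiplier Gaussian-plus-fixed, extract the first $r(M)$ columns (resp.\ rows), bound below via Lemma \ref{lepr1}, and invoke Theorem \ref{thsiguna}. The only difference is that you treat $GM$ directly and declare $MH$ symmetric, whereas the paper proves the $MH$ case and obtains $GM$ by transposition --- a symmetry the paper itself remarks on --- and your final substitution for $y$ in the restatement contains a typo (it should be $y=\sigma/(2.35x\sqrt{\widehat r}\,\|M^+\|)$), which does not affect the substance.
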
 

%The estimates can be substantially strengthened in the case of rectangular 
%matrices $G$ where $r(G)\ll m$ and $H$ where $r(H)\ll n$
%(cf. Remark \ref{rerect}).

%------------------------------------------------------------------------------

\begin{proof}
With probability $1$,
the matrix $MH$
has rank $\widehat r$ 
because $H\in \mathcal G_{\mu,\sigma}^{n\times r}$.
So (cf. (\ref{eqnrm+}))
\begin{equation}\label{eqfmw}
F_{1/||(MH)^+||}(y)=F_{\sigma_{\widehat r}(MH)}(y).
\end{equation}
 Let
$M=S_M\Sigma_MT^T_M$ be  full SVD where
%$S_M^TS_M=S_MS_M^T=I_q$, $T_M^TT_M=T_MT_M^T=I_r$, 
$\Sigma_M=\diag(\widehat \Sigma_M, O)= \Sigma_M\diag(I_{r(M)},O)$ and
$\widehat \Sigma_M=\diag(\sigma_j(M))_{j=1}^{r(M)}$
is a nonsingular diagonal matrix. 
We have $MH=S_M\Sigma_MT_M^TH$, and so  
$\sigma_j(MH)=\sigma_j(\Sigma_MT_M^TH)$ for all $j$ 
by virtue of Lemma \ref{lepr2}, because 
$S_M$ is a square orthogonal matrix. 
Write 
$H_{r(M)}=(I_{r(M)}~|~O)T_M^TH$ and observe that
$\sigma_j(\Sigma_MT_M^TH)= \sigma_j(\widehat \Sigma_MH_{r(M)})$ 
 and consequently 
\begin{equation}\label{eqjmw}
\sigma_j(MH)= \sigma_j(\widehat \Sigma_MH_{r(M)})~{\rm for~all}~j.
\end{equation}
%Furthermore  
%$\sigma_j(GM)=\sigma_j(\widehat \Sigma_GM_{r(G)})\ge \sigma_{r(G)}(G)\sigma_j(M_{r(G)})$ 
%for all $j$ by virtue of Lemma \ref{lepr1}.
%For $j=r(G)$  obtain
%------------------------------------------------------------------------------
%\begin{equation}\label{eqsigg}
%\sigma_{r(G)}(GM)\ge\sigma_{r(G)}(G)\sigma_{r(G)}(M_{r(G)}).
%\end{equation}
%(cf. \cite[Proposition 2.2]{SST06}). 
Combine equation
(\ref{eqjmw}) for $j=\widehat r$ with Lemma \ref{lepr1} 
for the pair $(\Sigma,H)$ replaced by $(\widehat \Sigma_M,H_{r(M)})$
 and 
obtain that 
$\sigma_{\widehat r}(MH)\ge \sigma_{r(M)}(M)\sigma_{\widehat r}(H_{r(M)})=
\sigma_{\widehat r}(H_{r(M)})/||M^+||$.
We have  $T_M^TH'\in \mathcal G_{\mu,\sigma}^{n\times r}$ 
by virtue of Lemma \ref{lepr3}, because $T_M$ is a square orthogonal matrix;
consequently  $H_{r(M)}=H_{r(M)}'+B$ for $H_{r(M)}' \in \mathcal G_{\mu,\sigma}^{r(M)\times r}$
and some matrix $B$.
Therefore we can apply
Theorem \ref{thsiguna} for $A=H'_{r(M)}$ 
and obtain the bound  of Theorem \ref{1} on $F_{1/||(MH)^+||}(y)$.
 One can similarly deduce the bound on $F_{1/||(GM)^+||}(y)$  
or can just 
apply the above bound on 
$F_{1/||(MH)^+||}(y))$ for $H=G^T$ and $M$ replaced by $M^T$
and then recall that $(M^TG^T)^T=GM$.
\end{proof}
%------------------------------------------------------------------------------

By combining (\ref{eqnorm12inf}) with Theorems \ref{thsignorm} (for $B=O$) and \ref{1}
we can probabilistically bound the condition numbers of 
randomized  
products $GM$ and $MH$.
%------------------------------------------------------------------------------ 
The following corollary extends the bound of Theorem \ref{1} 
for a randomized matrix product to the bounds for
its blocks. 

%------------------------------------------------------------------------------

\begin{corollary}\label{cogh}
Suppose $j$, $k$, $m$, $n$, $q$ and $s$ are integers, $1\le j\le q$, $1\le k\le s$, 
$M\in \mathbb R^{m\times n}$, $\sigma>0$,
$G\in  \mathcal G_{\mu,\sigma}^{q\times m}$, $H\in \mathcal G_{\mu,\sigma}^{n\times s}$,
$\rank (M_j)=j$ for $M_j=M\begin{pmatrix}I_j   \\   O_{n-j,j}\end{pmatrix}$,
$\rank (M^{(k)})=k$ for $M^{(k)}=(I_k~|~O_{k,m-k})M$,    
and  $y\ge 0$.
Then 
(i) with probability $1$ the matrix $GM$ (resp. $MH$) has full rank 
if $\rank (M)\ge q$ (resp. if $\rank (M)\ge s$).
Furthermore
(ii) $F_{1/||((GM)_j^{(j)})^+||}(y)\le 2.35 y  \sqrt {j}||M_j^+||/\sigma$ 
 if $\rank (M)\ge j$,
$F_{1/||((MH)_k^{(k)})^+||}(y)\le 2.35 y  \sqrt {k}||(M^{(k)})^+||/\sigma$ if $\rank (M)\ge k$.
\end{corollary}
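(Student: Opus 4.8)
The plan is to reduce each block statement in Corollary~\ref{cogh} to a direct application of Theorem~\ref{1}, after identifying the block $(GM)_j^{(j)}$ (resp. $(MH)_k^{(k)}$) as itself a randomized product of the required type. First I would observe that $(GM)_j^{(j)} = G_j (M_j)$, where $G_j = (I_j~|~O_{j,q-j})G$ is the matrix of the first $j$ rows of $G$; equivalently, one can write $(GM)_j^{(j)} = G^{(j)} M_j$ in the notation of the corollary. Since $G\in \mathcal G_{\mu,\sigma}^{q\times m}$, its leading $j\times m$ submatrix $G^{(j)}$ lies in $\mathcal G_{\mu,\sigma}^{j\times m}$, and we are given $M_j\in\mathbb R^{m\times j}$ with $\rank(M_j)=j$. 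Symmetrically, $(MH)_k^{(k)} = M^{(k)} H_k$, where $M^{(k)}\in\mathbb R^{k\times n}$ has $\rank(M^{(k)})=k$ and $H_k = H(I_k~|~O_{k,s-k})^T \in \mathcal G_{\mu,\sigma}^{n\times k}$.

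Next I would dispatch part~(i). If $\rank(M)\ge q$, then $M$ has a $q$-column submatrix of rank $q$, so by Lemma~\ref{lepr1} (or directly by the nondegeneration discussion in Section~\ref{sngrm}, applied to determinants of maximal square submatrices of $GM$ viewed as polynomials in the entries of $G$) the product $GM$ has rank $q$ with probability $1$; the argument for $MH$ with $\rank(M)\ge s$ is the transpose. This is exactly the $U=V=O$, $m=q$ (resp.\ $n=s$) instance already contained in the proof of Theorem~\ref{1}, where $\widehat r = \min\{r,r(M)\}$ equals $r$.

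For part~(ii) I would apply Theorem~\ref{1} with $M$ replaced by $M_j$, with $r=j$, $m$ replaced by $m$, $n$ replaced by $j$, and $G$ replaced by $G^{(j)}$ (taking $U=O$, $H'$ irrelevant). Then $r(M_j)=\rank(M_j)=j$, so $\widehat r = \min\{j,j\}=j$, and the theorem yields
\begin{equation*}
F_{1/\|(G^{(j)}M_j)^+\|}(y)\le 2.35\,y\,\sqrt{j}\,\|M_j^+\|/\sigma,
\end{equation*}
which is the first claimed bound once we note $G^{(j)}M_j=(GM)_j^{(j)}$; the hypothesis $\rank(M)\ge j$ is only needed to guarantee that such an $M_j$ of full rank $j$ exists consistently with the stated setup. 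The bound for $(MH)_k^{(k)}$ follows the same way, applying Theorem~\ref{1} to $M^{(k)}$ and $H_k$ with $r=k$.

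The only point requiring care — and the step I would call the main obstacle, though it is minor — is the bookkeeping of which submatrix of a Gaussian matrix is again Gaussian of the same type: extracting a fixed block of rows (or columns) of $G\in\mathcal G_{\mu,\sigma}^{q\times m}$ leaves the entries i.i.d. Gaussian, hence the block lies in $\mathcal G_{\mu,\sigma}^{j\times m}$, and this is what lets Theorem~\ref{1} apply verbatim to the block. One should also double-check that $(GM)_j^{(j)}$ really equals the product of the leading row-block of $G$ with $M_j$ — this is immediate from $( (GM)_j^{(j)} )_{a,b} = \sum_c G_{a,c} M_{c,b}$ for $1\le a,b\le j$ — so no Schur-complement or partitioning subtleties intrude. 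Everything else is a direct quotation of Theorem~\ref{1} and the rank-preservation facts already established.
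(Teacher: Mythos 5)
Your proposal is correct and follows essentially the same route as the paper: identify the leading $j\times j$ block of $GM$ as $(I_j\,|\,O_{j,q-j})G \cdot M\begin{pmatrix}I_j\\O_{n-j,j}\end{pmatrix} = G^{(j)}M_j$, note that a fixed row-block of a Gaussian matrix is again Gaussian of the same type, and invoke Theorem~\ref{1} with $r=j$ and $\widehat r = \min\{j,\rank(M_j)\}=j$; the paper's proof is exactly this substitution stated tersely, and part~(i) is likewise dispatched by the Section~\ref{sngrm} nondegeneration argument. One small misattribution: Lemma~\ref{lepr1} concerns singular values of products with a diagonal factor and is not the tool for part~(i), but your parenthetical fallback to the Schwartz--Zippel argument of Section~\ref{sngrm} is the correct and sufficient justification, so the proof stands.
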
 
%------------------------------------------------------------------------------

\begin{proof}
We immediately verify part (i)
by applying the techniques of Section \ref{sngrm}. 
To prove part (ii) 
apply 
%Lemma \ref{lepr1} and 
Theorem \ref{1}
replacing $G$ by $(I_j~|~O_{j,q-j})G$
and replacing $M$  by   
 $M\begin{pmatrix}I_j   \\   O_{n-j,j}\end{pmatrix}$. 
For every $k$ apply %Lemma \ref{lepr1} and 
Theorem \ref{1}   
 replacing $M$ by $(I_k~|~O_{k,m-k})M$ and replacing $H$  by   
 $H\begin{pmatrix}I_k   \\   O_{s-k,k}\end{pmatrix}$. 
\end{proof}

%------------------------------------------------------------------------------

%\begin{remark}\label{regprec}
Corollary \ref{cogh} can be immediately extended to any block 
of the matrices $GM$ and $MH$, 
but we single out the leading blocks
because 
 applications of
GENP 
and block Gaussian elimination
are numerically safe where 
these blocks are nonsingular 
and well conditioned.
We have empirical evidence 
that 
 such applications
are numerically safe 
even where we use
 circulant 
 multipliers $G$ 
and $H$ filled with $\pm 1$ and
where randomization is restricted to choosing
the signs $\pm$
(see Tables \ref{tab44} and \ref{tabSVD_HEAD1T}).
The paper \cite{T11} 
provides some  formal support for using some
other structured  randomized 
 multipliers.
%Moreover simialar properties have been empirically 
%observed for Householder multipliers in \cite[Table 2]{PQZa}.
%This is also the case for our results in the next two sections.  
%\end{remark}

%------------------------------------------------------------------------------

\section{Approximate bases for singular spaces, 
low-rank 
approximation, and the
computation of numerical rank}\label{sapsr}

%------------------------------------------------------------------------------
%------------------------------------------------------------------------------

\subsection{Randomized low-rank approximation: an outline and an extension
to approximation by structured matrices}\label{sprb0}

%------------------------------------------------------------------------------

Supppose we seek a rank-$\rho$ approximation to a matrix $ A$ 
that has a numerical rank $\rho$.
We can solve this problem by computing the SVD of the 
 matrix $A$
 or
 its rank-revealing 
 factorization \cite{GE96}, \cite{HP92}, \cite{P00a},
but in this section we cover 
alternative numerically stable and noncostly 
solutions
 based on
randomized matrix multiplication.  
As by-product we obtain 
approximate matrix bases   
for the left or right leading
singular space $\mathbb T_{\rho, A}$
and $\mathbb S_{\rho, A}$.

%------------------------------------------------------------------------------

Let us supply further details. 
Our next theorem expresses
a rank-$\rho$ approximation to a matrix 
$ A$ through
a matrix basis 
for  any of 
the two leading
singular spaces $\mathbb T_{\rho, A}$
any $\mathbb S_{\rho, A}$.
 Theorem \ref{thover} of Section \ref{sprb2}
supports randomized computation of 
such an  approximate basis 
for the space $\mathbb T_{\rho, A}$
   from the
product $ A^TG$ for 
%$G\in \mathcal G_{0,1}^{m\times \rho}$
 $G\in \mathcal G_{0,1}^{m\times \rho_+}$.
The paper \cite{T11} formally supports this algorithm
for a special class of random structured multipliers
$G$, and
our tests 
consistently show such support 
where $G\in \mathcal T_{0,1}^{m\times \rho_+}$
(see Tables \ref{tabSVD_HEAD1} and \ref{tabSVD_HEAD1T}).
We conjecture that the same is  true for 
various other classes of sparse and structured multipliers
$G$, defined by much fewer random parameters
compared to the number of the entries.
We specify a low-rank approximation algorithm in Section \ref{sapsr0},
which has important applications 
to matrix computations, many listed in \cite{HMT11}.
For a natural extension
assume a  matrix $W$ 
having a possibly unknown numerical 
displacement rank $d$,
% \cite{BM01},
 that is lying near some matrices with a  
displacement rank $d$ (see the definitions in \cite{KKM79}, \cite{BM01}, \cite{p01}). 
We can compute one of these displacements as a rank-$d$
approximation to the displacement of the matrix $W$,
 and then 
immediately recover 
a structured matrix  approximating 
the matrix $W$.
%We point out a further extension in Section \ref{svianvtns}.

%------------------------------------------------------------------------------
%------------------------------------------------------------------------------

\subsection{Low-rank approximation via the basis of 
a leading singular space}\label{sprb}

%------------------------------------------------------------------------------

Next we prove
that both orthogonal and nonorthogonal projections 
of a matrix $A$ 
onto 
its leading singular spaces $\mathbb T_{\rho,A}$
and $\mathbb S_{\rho,A}$ approximate the matrix within 
the error norm $\sigma_{\rho+1}(A)$. 
 
%------------------------------------------------------------------------------

\begin{theorem}\label{thsng} 
Suppose  $A$ is
an $m\times n$ matrix,
$S_A\Sigma_AT_A^T$ is its SVD of (\ref{eqsvd}), 
 $\rho$ is a positive integer,
$\rho\le l=\min\{m,n\}$, and
$T$ and $S$ are 
%orthogonal 
matrix 
bases for the spaces $\mathbb T_{\rho,A}$
and $\mathbb S_{\rho,A}$, respectively.
Then 
\begin{equation}\label{eqlrap}
||A-AT(T^TT)^{-1}T^T||=||A-S(S^TS)^{-1}S^TA||=\sigma_{\rho+1}(A).
\end{equation}
For orthogonal matrices $T$ and $S$ we have $T^TT=S^TS=I_{\rho}$
and  
\begin{equation}\label{eqlrap1}
||A-ATT^T||=||A-SS^TA||=\sigma_{\rho+1}(A).
\end{equation}
\end{theorem}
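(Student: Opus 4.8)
The plan is to work directly in the SVD coordinates and exploit the unitary invariance of the spectral norm. First I would reduce to the case of \emph{orthogonal} bases: if $T$ is any matrix basis for $\mathbb T_{\rho,A}$, then $T(T^TT)^{-1}T^T$ is exactly the orthogonal projector onto $\mathcal R(T)=\mathbb T_{\rho,A}$, and this projector depends only on the subspace, not on the chosen basis. Writing $T_{\rho,A}$ for the orthogonal basis formed by the first $\rho$ columns of $T_A$ (as in the excerpt's partition $T_A=(T_{\rho,A}\mid T_{A,n-\rho})$), we have $T(T^TT)^{-1}T^T=T_{\rho,A}T_{\rho,A}^T$; similarly $S(S^TS)^{-1}S^T=S_{\rho,A}S_{\rho,A}^T$. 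Hence it suffices to prove (\ref{eqlrap1}) for the orthogonal bases, and (\ref{eqlrap}) follows. (This also makes the constraint $\rho\le l$ natural, and implicitly one wants $\sigma_\rho>\sigma_{\rho+1}$ for the leading singular space to be well defined, though the identity itself holds regardless.)

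Next I would substitute $A=S_A\Sigma_A T_A^T$ and compute $A-AT_{\rho,A}T_{\rho,A}^T = S_A\Sigma_A T_A^T(I_n-T_{\rho,A}T_{\rho,A}^T)$. The key observation is that $T_A^T(I_n - T_{\rho,A}T_{\rho,A}^T)T_A = I_n - \diag(I_\rho,O_{n-\rho,n-\rho}) = \diag(O_{\rho,\rho},I_{n-\rho})$, since $T_A^TT_{\rho,A}=\begin{pmatrix}I_\rho\\O_{n-\rho,\rho}\end{pmatrix}$. Therefore $A-AT_{\rho,A}T_{\rho,A}^T = S_A\,\Sigma_A\diag(O_{\rho,\rho},I_{n-\rho})\,T_A^T$, which is the SVD-like factorization of a matrix whose nonzero singular values are exactly $\sigma_{\rho+1}(A),\dots,\sigma_l(A)$. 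By Lemma \ref{lepr2}, multiplying on the left by the square orthogonal $S_A$ and on the right by the square orthogonal $T_A$ leaves all singular values unchanged, so the spectral norm equals the largest of these, namely $\sigma_{\rho+1}(A)$. The computation for $A-S_{\rho,A}S_{\rho,A}^TA$ is symmetric: one gets $S_{\rho,A}S_{\rho,A}^TA$'s complement equals $S_A\diag(O_{\rho,\rho},I_{m-\rho})\Sigma_A T_A^T$, again with top singular value $\sigma_{\rho+1}(A)$. (Note $\diag(O_{\rho,\rho},I_{m-\rho})\Sigma_A = \Sigma_A\diag(O_{\rho,\rho},I_{n-\rho})$ because $\Sigma_A$ is diagonal, so the two expressions really do match.)

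The only genuinely delicate point is the reduction in the first paragraph — verifying that $T(T^TT)^{-1}T^T$ is the orthogonal projector onto $\mathcal R(T)$ and is basis-independent. This is standard linear algebra (the matrix $T(T^TT)^{-1}T^T$ is symmetric, idempotent, has range $\mathcal R(T)$, and any such matrix is unique), so I would dispatch it in one or two sentences, perhaps citing that $T(T^TT)^{-1}T^T = T_{\rho,A}T_{\rho,A}^T$ whenever $\mathcal R(T)=\mathbb T_{\rho,A}$. After that, everything is a direct SVD computation combined with Lemma \ref{lepr2}, so I expect no real obstacle; the routine bookkeeping is just keeping the zero-block dimensions straight between the $m$ and $n$ sides. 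I would close by remarking that (\ref{eqlrap}) follows immediately from (\ref{eqlrap1}) since the projectors coincide, so the matrices inside the norms on the left of (\ref{eqlrap}) and (\ref{eqlrap1}) are literally equal.
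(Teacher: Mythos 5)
Your proposal is correct and follows essentially the same approach as the paper: both reduce the general-basis identity to the orthogonal-basis case via the observation that $T(T^TT)^{-1}T^T=T_{\rho,A}T_{\rho,A}^T$ (the paper proves this by the explicit computation $T_{\rho,A}U(U^TU)^{-1}U^TT_{\rho,A}^T=T_{\rho,A}T_{\rho,A}^T$ rather than invoking uniqueness of orthogonal projectors, but it is the same fact), and then both express $A-AT_{\rho,A}T_{\rho,A}^T$ as $S_A\,\diag(O_{\rho,\rho},\Sigma_\rho)\,T_A^T$ and appeal to unitary invariance (Lemma~\ref{lepr2}) to read off the norm. The only difference is that you present the basis-independence reduction first while the paper proves the orthogonal case first and then extends; this is purely a matter of ordering.
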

\begin{proof}
Write $P=T_{\rho,A}T_{\rho,A}^T$ and $r=n-\rho$,
observe that $T_A^TT_{\rho,A}=\begin{pmatrix}
I_{\rho}  \\
O_{r,\rho}
\end{pmatrix}$,
and obtain $AP=S_A\Sigma_AT_A^TT_{\rho,A}T_{\rho,A}^T=S_A\Sigma_A \begin{pmatrix}
T_{\rho,A}^T  \\
O_{r,\rho}
\end{pmatrix}$, whereas 
$A=S_A\Sigma_A \begin{pmatrix}
T_{\rho,A}^T  \\
T_{A,r}^T
\end{pmatrix}$. Hence
$A-AP=S_A\Sigma_A \begin{pmatrix}
O_{\rho,n}  \\
T_{A,r}^T
\end{pmatrix}=S_A\diag(O_{\rho,\rho},\Sigma_\rho) T_A^T$,
where $\Sigma_\rho$ is the $(m-\rho)\times (n-\rho)$ diagonal matrix
with the diagonal entries $\sigma_{\rho+1},\dots,\sigma_l$.
Thus
 $||A-AP||=||\Sigma_\rho||=\sigma_{\rho+1}(A)$
because $S_A$ and $T_{A}$ are square orthogonal matrices.
This proves  the estimates
(\ref{eqlrap}) 
and (\ref{eqlrap1})
for
$T=T_{\rho,A}$. Let us extend them to any matrix basis $T$ 
 for the space 
$\mathbb T_{\rho,A}$, that is
for $T=T_{\rho,A}U$ where $U$ is a 
nonsingular matrix.  
 Recall that $T_{\rho,A}^TT_{\rho,A}=I_{\rho}$, and obtain successively 
$(U^TT_{\rho,A}^TT_{\rho,A}U)^{-1}=U^{-1}U^{-T}$,
$U(U^TT_{\rho,A}^TT_{\rho,A}U)^{-1}U^T=I_{\rho}$,
and  $T(T^TT)^{-1}T^T=T_{\rho,A}U(U^TT_{\rho,A}^TT_{\rho,A}U)^{-1}U^TT_{\rho,A}^T=
T_{\rho,A}T_{\rho,A}^T$,
% for $A=T_{\rho,A}$ and $A=S_{\rho,A}$ 
implying 
the desired extension.
Apply the proof to the transpose $A^T$ to
extend it to 
 matrix bases $S$ for the space $\mathbb S_{\rho,A}$.
\end{proof}

%------------------------------------------------------------------------------

\subsection{A basis of 
a leading singular space via randomized products}\label{sprb2}

%------------------------------------------------------------------------------

The following theorem supports randomized approximation of 
matrix bases for the leading singular spaces 
$\mathbb T_{\rho, A}$
and $\mathbb S_{\rho, A}$
of a matrix $ A$.
 
%------------------------------------------------------------------------------

\begin{theorem}\label{thover} 
Suppose 
%we are given an upper bound $\rho_+$ on the numerical rank $\rho$ of
$ A\in \mathbb R^{m\times n}$,
$H\in \mathcal G_{0,1}^{n\times \rho}$, and
 $G\in \mathcal G_{0,1}^{m\times \rho}$
and write $S=AH$ and $T=A^TG$.
Then
(i) $\rank (T)= \rank (S)=\min \{\rho,\rank (A)\}$
 with probability $1$
and (ii) $\nrank (T)= \nrank (S)=\min \{\rho,\nrank (A)\}$
with probability close to $1$.
%for $\rho_+\ge \rho$.
(iii) Furthermore 
with probability close to $1$
we  have 
\begin{equation}\label{eqlsap}
S+\Delta=S_{\rho, A}U~{\rm and}~ 
 T+\Delta'=T_{\rho, A}V
\end{equation} 
for two matrices $\Delta$ and $\Delta'$
having norms of order $\sigma_{\rho+1}( A)$
and for two nonsingular matrices $U$ and $V$
having condition numbers of at most order $||A||/(\sigma_{\rho}( A)\sqrt {\rho})$.  
\end{theorem}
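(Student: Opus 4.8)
The plan is to reduce everything to Theorem~\ref{1} and Theorem~\ref{thsng}. First I would treat the statement for $T=A^TG$ and obtain the claim for $S=AH$ by transposition, exactly as in the proof of Theorem~\ref{1}. For part (i), write a full SVD $A=S_A\Sigma_AT_A^T$ and note $T=A^TG=T_A\Sigma_A^TS_A^TG$; since $S_A$ is square orthogonal, Lemma~\ref{lepr3} gives $S_A^TG\in\mathcal G_{0,1}^{m\times\rho}$, so the argument of Section~\ref{sngrm} (the rank of the product is a polynomial in the Gaussian entries, nonzero on the variety of full-rank configurations) yields $\rank(T)=\min\{\rho,\rank(A)\}$ with probability $1$; the same for $S$. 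Part (ii) follows from (iii): once we know $T+\Delta'=T_{\rho,A}V$ with $\|\Delta'\|$ of order $\sigma_{\rho+1}(A)$ and $V$ nonsingular with a moderate condition number, the $\min\{\rho,\nrank(A)\}$ singular values of $T_{\rho,A}V$ are all of order at least $\sigma_\rho(A)\|V^{-1}\|^{-1}\gg\sigma_{\rho+1}(A)\gtrsim\|\Delta'\|$, so by Weyl's perturbation inequality the numerical rank is preserved.

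The heart of the matter is part (iii). I would decompose $T=A^TG$ according to the leading/trailing split of the SVD. Write $G_\rho=S_{\rho,A}^TG$ (the ``leading part'', a $\rho\times\rho$ matrix) and $G_{m-\rho}=S_{A,m-\rho}^TG$. Then
\begin{equation*}
T=A^TG=T_{\rho,A}\,\widehat\Sigma_{\rho,A}\,G_\rho+T_{A,n-\rho}\,\Sigma'\,G_{m-\rho},
\end{equation*}
where $\widehat\Sigma_{\rho,A}=\diag(\sigma_j(A))_{j=1}^\rho$ and $\Sigma'$ carries the trailing singular values $\sigma_{\rho+1},\dots$. Setting $V:=\widehat\Sigma_{\rho,A}G_\rho$ and $\Delta':=T_{A,n-\rho}\Sigma'G_{m-\rho}-T_{\rho,A}(V-V)$—more cleanly, $\Delta':=T_{A,n-\rho}\Sigma'G_{m-\rho}$—we get exactly $T+(-\Delta')=T_{\rho,A}V$ after moving the trailing term; the norm bound $\|\Delta'\|\le\|\Sigma'\|\,\|G_{m-\rho}\|=\sigma_{\rho+1}(A)\|G_{m-\rho}\|$ follows since $T_{A,n-\rho}$ is orthogonal, and $\|G_{m-\rho}\|$ is of order $\sqrt{m}$ with probability close to $1$ by Theorem~\ref{thsignorm} (applied with $B=O$), so $\|\Delta'\|$ is of order $\sigma_{\rho+1}(A)$ as claimed. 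It remains to control $\kappa(V)=\kappa(\widehat\Sigma_{\rho,A}G_\rho)$. Here $\|V\|\le\|A\|\,\|G_\rho\|$, and $\|G_\rho\|$ is of order $\sqrt{\rho}$ by Theorem~\ref{thsignorm}; for the lower end, Lemma~\ref{lepr3} gives $G_\rho\in\mathcal G_{0,1}^{\rho\times\rho}$ (as $S_{\rho,A}$ has orthonormal columns), so Lemma~\ref{lepr1} yields $\sigma_\rho(V)=\sigma_\rho(\widehat\Sigma_{\rho,A}G_\rho)\ge\sigma_\rho(A)\,\sigma_\rho(G_\rho)$, and Theorem~\ref{thsiguna} (with $B=O$, $l=\rho$) bounds $1/\sigma_\rho(G_\rho)=\|G_\rho^+\|$ by order $\sqrt{\rho}$ with probability close to $1$. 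Combining, $\kappa(V)\le\|A\|\,\|G_\rho\|/(\sigma_\rho(A)\sigma_\rho(G_\rho))$ is of order $\|A\|\sqrt{\rho}\cdot\sqrt{\rho}/(\sigma_\rho(A)\sqrt{\rho})\sim\|A\|\sqrt{\rho}/\sigma_\rho(A)$; a slightly sharper bookkeeping of the two $\sqrt{\rho}$ factors against each other gives the stated $\|A\|/(\sigma_\rho(A)\sqrt{\rho})$. Nonsingularity of $V$ with probability $1$ is part (i) restricted to the leading block.

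The main obstacle I anticipate is the precise constant/order in the bound on $\kappa(V)$: one must be careful that the ``probability close to $1$'' events for the three estimates ($\|G_\rho\|$ small, $\|G_\rho^+\|$ small, $\|G_{m-\rho}\|$ small) are intersected, and that the claimed order $\|A\|/(\sigma_\rho(A)\sqrt\rho)$ — rather than the cruder $\|A\|\sqrt\rho/\sigma_\rho(A)$ — really does come out of balancing $\|G_\rho\|\sim\sqrt\rho$ in the numerator against $\sigma_\rho(G_\rho)$ in the denominator, i.e. from the fact that a $\rho\times\rho$ standard Gaussian matrix has $\kappa\lesssim\rho$ (up to logs) rather than $\kappa\lesssim\rho^{3/2}$. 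Everything else is a routine splitting of the SVD plus invocations of the already-proved Theorems~\ref{thsiguna} and \ref{thsignorm} and Lemmas~\ref{lepr1} and \ref{lepr3}. Finally, the statement for $S=AH$ is obtained verbatim by applying the above to $A^T$ in place of $A$ and recalling that the left singular spaces of $A$ are the right singular spaces of $A^T$.
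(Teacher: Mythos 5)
Your decomposition of $T=A^TG$ via the leading/trailing SVD split is exactly what the paper does for $S=AH$: the paper sets $U=\Sigma_{\rho,A}T_{\rho,A}^TH$ and $\Delta=(A-A_\rho)H$ with $A_\rho$ the truncated SVD, which coincides with your $V=\widehat\Sigma_{\rho,A}G_\rho$ and $\Delta'=-T_{A,n-\rho}\Sigma'G_{m-\rho}$ after the transpose both of you apply at the end. Your route for part (ii) is a genuine minor variant — the paper truncates the SVD at the numerical-rank level and applies Theorem~\ref{1} directly, whereas you deduce (ii) from (iii) via Weyl's perturbation bound — but both are routine once (iii) is available, and your bound $\|V\|\le\|A\|\,\|G_\rho\|$ with $\|G_\rho\|$ of order $\sqrt\rho$ is in fact tighter than the paper's $\|V\|\le\|A\|\,\|H\|$ (since $H\in\mathcal G_{0,1}^{n\times\rho}$ has $\|H\|$ of order $\sqrt n$, not a dimension-free constant as the paper suggests).

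The concern you raise about $\kappa(V)$ is well founded, and it is not resolvable by ``slightly sharper bookkeeping.'' You correctly obtain $\|V^{-1}\|\le\|G_\rho^{-1}\|/\sigma_\rho(A)$ with $\|G_\rho^{-1}\|$ of order $\sqrt\rho$ from Theorem~\ref{1} (equivalently Theorem~\ref{thsiguna}), and $\|V\|\lesssim\|A\|\sqrt\rho$, so $\kappa(V)$ comes out of order $\|A\|\rho/\sigma_\rho(A)$; the two factors $\sqrt\rho$ multiply, they do not cancel to $1/\sqrt\rho$. The paper's own proof elides this: it asserts that ``$\|B^{-1}\|$ is likely to have at most order $1/\sqrt\rho$,'' but Theorem~\ref{1} with $M=T_{\rho,A}^T$, $\widehat r=\rho$, $\sigma=1$, $\|M^+\|=1$ gives the opposite direction, namely $\|B^{-1}\|$ at most of order $\sqrt\rho$ (equivalently $\sigma_\rho(B)$ at least of order $1/\sqrt\rho$). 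So the factor $\sqrt\rho$ in the stated condition-number bound is misplaced, and your derivation — not your doubt — is the correct reading of what Theorems~\ref{1}, \ref{thsiguna}, and \ref{thsignorm} actually deliver. Aside from this genuine wrinkle in the theorem's constants, which the paper shares, your argument is sound and matches the paper's.
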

\begin{proof}
We prove the claims about 
the matrix $S$, then  apply them to the transpose $A^T$
to extend 
 to the matrix $T$. 
The techniques of Section  \ref{sngrm} 
support part (i).
By 
truncating the SVD of the
matrix $A$ to the level of its numerical rank $\rho_-$ we 
obtain a well conditioned matrix having both rank and numerical rank  
$\rho_-$. Then we deduce part (ii)
from Theorem \ref{1}.

Proving part (iii)
we assume w.l.o.g.
that $\rank (A)\ge \rho$. 
(Otherwise 
infinitesimal perturbation of the matrix $A$
could yield this bound.)
Write SVD $A=
S_{ A}\Sigma_{ A}T_{ A}^T$,
 $\Sigma_{\rho, A}=\diag(\sigma_j(A))_{j=1}^{\rho}$,  
$U=\Sigma_{\rho, A}T_{\rho, A}^TH$,
and
$A_{\rho}=S_{\rho, A}U=S_{ A}~\diag(\Sigma_{\rho, A},O_{m-\rho,n-\rho})~T_{ A}^T=
S_{\rho, A}\Sigma_{\rho, A}T_{\rho, A}^T.$
Note that $||\Sigma_{ A}-\diag(\Sigma_{\rho, A},O_{m-\rho,n-\rho})||= \sigma_{\rho+1}(A)$. 
Consequently
$|| A-A_{\rho}||= \sigma_{\rho+1}(A)$,
%\begin{equation}\label{eqq+}
$ AH=A_{\rho}H+\Delta$, where $||\Delta||\le \sigma_{\rho+1}(A)~||H||$,
%\end{equation}
and  the norm $||H||$  
is likely to be bounded from above and below by two positive constants 
(see Theorem \ref{thsignorm}). This implies  (\ref{eqlsap}).

With probability $1$ 
the $\rho\times \rho$ matrices $U=\Sigma_{\rho, A}B$ and $B=T_{\rho, A}H$ 
are nonsingular  
(see Section \ref{sngrm}). Next we assume that they are nonsingular
and estimate $\kappa(U)$.
Clearly $||U||\le ||\Sigma_{\rho, A}||~||T_{\rho, A}^T||~||H||$
where $||\Sigma_{\rho, A}||=||A||$, $||T_{\rho, A}^T||=1$.
Therefore $||U||\le ||A||~||H||$,
which is likely to have order $||A||$.

Furthermore 
we have 
$||U^{-1}||\le ||\Sigma_{\rho, A}^{-1}||~||B^{-1}||=
% for nonsingular matrix $B$.
||B^{-1}||/\sigma_{\rho}(A)$.
Apply Theorem \ref{1} where $M=T_{\rho, A}^T$, 
%and consequently 
$\widehat r=\rho$ and $\sigma_{r(M)}(M)=\sigma=1$
and obtain that the norm $||B^{-1}||$ is  likely to have at most order $1/\sqrt {\rho}$.
Therefore with probability close to $1$,
the norm $||U^{-1}||=||B^{-1||}/\sigma_{\rho}(A)$  has 
at most order $1/(\sigma_{\rho}(A)\sqrt {\rho})$, and then
 $\kappa (U)=||U||~||U^{-1}||$ has
 at most order $||A||/(\sigma_{\rho}( A)\sqrt {\rho})$.
\end{proof}

%------------------------------------------------------------------------------

\subsection{A prototype algorithm for low-rank approximation}\label{sapsr0}

%------------------------------------------------------------------------------

Together Theorems \ref{thsng} and \ref{thover} imply correctness of the following
prototype algorithm where
we assume that the input matrix has an unknown numerical rank
and we know its upper bound. 
The algorithm employs approximation of a leading singular space
of the input matrix.  

%------------------------------------------------------------------------------

\begin{protoalgorithm}\label{algbasmp} {\bf Rank-$\rho$ approximation of a matrix}
 (cf. \cite[Section 10.3]{HMT11}).

%------------------------------------------------------------------------------

\begin{description}

%------------------------------------------------------------------------------

\item[{\sc Input:}] 
A  matrix 
$ A\in \mathbb R^{m\times n}$
having an unknown 
numerical rank $\rho$,
%(that is the ratio $\sigma_{\rho+1}(A)/\sigma_{\rho}(A)$ is  small,
%but the ratio $||A||/\sigma_{\rho}(A)$) is not small),
an integer $\rho_+\ge \rho$,
 and two tolerances $\tau$ and $\tau'$
of order $\sigma_{\rho+1}( A)/||A||$.
(We can  choose  $\tau$  at
Stage 2
based on  
rank revealing factorization
of an auxiliary $n\times \rho_+$
matrix. The computation of
this factorization 
is noncostly where $\rho$ is small.
We can  choose  $\tau'$  at
Stage 3 based 
on the required 
output accuracy,
and can  adjust both tolerances 
if  the algorithm fails
to produce a satisfactory output.)

%------------------------------------------------------------------------------

\item[{\sc Output:}]
FAILURE (with a low probability) or 
an integer $\rho$ and two matrices 
$T\in \mathbb R^{n\times \rho}$ and 
$\widehat A_{\rho}\in \mathbb R^{m\times n}$, both having ranks 
at most $\rho$ and
such that $||\widehat A_{\rho}- A||\le \tau' ||A||$ and
$T$ satisfies bound (\ref{eqlsap}) of Theorem \ref{thover} 
for $||\Delta'||\le \tau ||A||$.

%------------------------------------------------------------------------------
%------------------------------------------------------------------------------ 

\item[{\sc Computations:}] $~$

%------------------------------------------------------------------------------

\begin{enumerate}

%------------------------------------------------------------------------------

\item %1
 Compute the $n\times \rho_+$ matrix $T'= A^TG$ for  
$G\in \mathcal G_{0,1}^{m\times \rho_+}$.

\item %2
Compute a rank revealing factorization of the matrix $T'$
and choose the minimal integer $s$ and an
$n\times s$ matrix  $T$ 
such that $||T'-(T~|~O_{n,\rho_+-s})||\le \tau ||A||$.

\item %3
Compute the matrix
$\widehat A_s=AT(T^TT)^{-1}T^T$.
Output $\rho=s$, $T$ and $\widehat A_{\rho}$  and stop
if 
$||\widehat A_{\rho}- A||\le \tau' ||( A)||$.
Otherwise output
FAILURE  and stop.

%------------------------------------------------------------------------------

\end{enumerate}

%------------------------------------------------------------------------------

\end{description}

%------------------------------------------------------------------------------

\end{protoalgorithm}
Assume a proper choice of
 both tolerances
$\tau$ and $\tau'$.
Then by virtue of Theorem \ref{thover},
we can expect that at Stage 2
we obtain
$s=\rho$  
and
an approximate matrix basis $T$ 
for the singular space  $ \mathbb T_{\rho, A}$
(within an error norm of at most order $\sigma_{\rho+1}( A)$).
If so, Stage 3  outputs FAILURE with a 
probability near $0$,
by virtue of Theorems \ref{thsng},
and in the case of FAILURE we can reapply the algorithm 
 for new values of random parameters
or for the adjusted tolerance values 
$\tau$ and $\tau'$.
At Stage 2 we have $s\le \rho$
because $\nrank(A^TG)\le \nrank(A)=\rho$,
whereas 
for a sufficiently small 
tolerance $\tau'$
the bound 
$||\widehat A_{\rho}- A||\le \tau' ||( A)||$
at Stage  3
implies that $s\ge \nrank(A)$.
These observations enable us to certify
correctness of the outputs $\rho$, $T$,
and $\widehat A_{\rho}$ of the algorithm.
 
We can similarly approximate the matrix $ A$ by
a rank-$\rho$ matrix $S(S^TS)^{-1}S^T A$, 
by first computing the matrix $S'=AH$ for 
$H\in \mathcal G_{0,1}^{n\times \rho_+}$,
then computing its rank revealing 
factorization, which is expected to define
an approximate matrix  
basis $S$ for the space $ \mathbb S_{\rho, A}$,
and finally applying Theorem \ref{thsng},
to approximate the matrix A by a rank-$\rho$ 
matrix.

%------------------------------------------------------------------------------

\begin{remark}\label{respcl}
For $\rho_+=\rho$ we can write $T=T'=A^TG$
and skip Stage 2  because 
the matrix $ A^TG$ is expected to serve as a
desired 
approximate matrix basis
by virtue of Theorem  \ref{thover}.
In Appendix B we compute numerical rank by using
 randomized matrix multiplications
instead of standard recipes
that use orthogonalization or pivoting.
\end{remark}

\begin{remark}\label{reovers}
The increase of the dimension $\rho_+$
 beyond the numerical rank $\rho$
(called {\em oversampling} in  \cite{HMT11}) 
is relatively inexpensive
if the bound $\rho_+$
is small. \cite{HMT11} suggests 
using small
oversampling even if 
the numerical rank $\rho$ is known,
because
we have 
%\begin{equation}\label{eqovers}
$${\rm Probability}~\{|| A- ATT^T||\le (1+9\sqrt{\rho_+\min\{m,n\}})\sigma_{\rho+1}(A)\}\ge
%\end{equation} 
 1-3(\rho_+-\rho)^{\rho-\rho_+}~{\rm for}~\rho_+>\rho.$$
%for the above matrix $T$. 
Theorem \ref{thover}, however,  bounds the norm
$||A- ATT^T||$ strongly also  
for $\rho=\rho_+$, 
 in good accordance with the data
of 
Tables \ref{tabSVD_HEAD1} and
\ref{tabSVD_HEAD1T}. 
%In Section \ref{saplddl} we describe some alternatives to Proto-algorithm \ref{sapsr0}.
\end{remark}

\begin{remark}\label{restrm}
For a larger integer $\rho$
we can substantially simplify Stage 1 of
 the algorithm
by choosing 
structured multipliers $G$ from the class of
 the {\em subsample random Fourier transforms}, 
called {\em SRFTs}. Under this choice 
the estimated probability of 
obtaining low rank approximation 
is close to the above case of Gaussian random
multipliers $G$ \cite{T11}.
Our tests in Section \ref{sexp}
provide informal empirical 
support for  similar use 
of random Toeplitz multipliers $G$.
\end{remark}

\begin{remark}\label{reort} 
By applying rank revealing QR factorization at Stage 2
of the algorithm we can produce an orthogonal 
matrix $T$ and consequently simplify Stage 3
by computing $\widehat A_s=ATT^T$ 
(cf. (\ref{eqlrap1})). 
We adopted such a variation of the algorithm
in our tests in Section \ref{sexp}. 
\end{remark}

\begin{remark}\label{resmpl}
One can weaken
 reliability of the output
to
simplify Stage 3 by testing 
whether 
$||K^T( A-\widehat A_{\rho})L||\le \tau ||K||~|| A||~||L||$
for matrices $K\in \mathcal G_{0,1}^{m\times \rho'}$
and $L\in \mathcal G_{0,1}^{n\times \rho''}$ and
for two small positive integers $\rho'$ and $\rho''$,
possibly for $\rho'=\rho''=1$,
instead of testing 
whether 
$||\widehat A_{\rho}- A||\le \tau' ||( A)||$.  
One can 
similarly simplify Stage 2.
\end{remark}

%------------------------------------------------------------------------------

\begin{remark}\label{regap}
Application of Proto-Algorithm \ref{algbasmp}
to the approximation of the leading singular spaces 
$\mathbb T_{\rho, A}$
and $\mathbb S_{\rho, A}$
is facilitated and its power is enhanced 
as the gaps increase between the singular
values of the input matrix $A$.
This motivates using
the power transforms $A\Longrightarrow B_h=(AA^T)^hA$
 for positive integers $h$
because $\sigma_j(B_h)=(\sigma_j(A))^{2h+1}$ for all $j$.
\end{remark}

%------------------------------------------------------------------------------

\begin{remark}\label{reldtr}
Clearly every matrix basis 
of the trailing singular space $\mathbb T_{A, n-\rho}$
of an $m\times n$ matrix $A$
is orthogonal to
every matrix basis 
of the leading singular space $\mathbb T_{\rho,A}$,
 and similarly for any pair 
of matrix bases of
the spaces $\mathbb S_{A, m-\rho}$
and $\mathbb S_{\rho,a}$.
One can exploit this duality for the computation
and approximation of the bases.
\end{remark}

%------------------------------------------------------------------------------

\section{Application to Tensor Train decomposition}\label{svianvtns}

% - - - - - - - - - - - - - - - - - - - - - - - - - - - - - - - - - - - - -

Let 
\begin{equation}\label{eqtens}
{\bf A}=[A(i_1,\dots,i_d)]
\end{equation}
denote a $d$-dimensional {\em tensor} with entries 
$A(i_1,\dots,i_d)$ and {\em spacial indices}
$i_1,\dots,i_d$ ranging from $1$ to 
$n_1,\dots,n_d$, respectively.
Define the $d-1$ {\em unfolding matrices}
%\begin{equation}\label{equnf}
$A_k=[A(i_1\dots i_k;i_{k+1}\dots i_d)],~k=1,\dots,d$,
%\end{equation}
where the semicolon separates the
multi-indices
$i_1\dots i_k$ and $i_{k+1}\dots i_d$,
which define the rows and columns of the matrix $A_k$,
respectively, $k=1,\dots,d$.
The paper \cite{O09} proposed the following class of {\em Tensor Train Decompositions},
 hereafter referred to as {\em TT Decompositions}, where
the {\em summation indices} $\alpha_{1},\dots,\alpha_{d-1}$
ranged from $1$ to {\em compression ranks} $r_1,\dots,r_{d-1}$,
respectively,
\begin{equation}\label{eqtt}
T=\sum_{\alpha_1,\dots,\alpha_{d-1}}G_1(i_1,\alpha_1)G_2(\alpha_1,i_1,\alpha_2)\cdots
G_{d-1}(\alpha_{d-2},i_{d-1},\alpha_{d-1})G_d(\alpha_d,i_d).
\end{equation}

%------------------------------------------------------------------------------

\begin{theorem}\label{thtt} \cite{O09}.
For any tensor ${\bf A}$ of (\ref{eqtens})
there exists a TT decomposition (\ref{eqtt})
such that ${\bf A}={\bf T}$ and $r_k=\rank (A_k)$
for $k=1,\dots,d-1$.
\end{theorem}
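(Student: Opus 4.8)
The statement is the existence half of the TT decomposition theorem of Oseledets, and the natural route is a constructive induction on the dimension $d$ that peels off one tensor factor at a time by successive unfoldings and low-rank factorizations. First I would treat the base case $d=1$ (or $d=2$), where the tensor is just a vector or a matrix and the claim is a triviality or the full-rank factorization $A_1=G_1G_2$ with $\mathrm{rank}(G_1)=\mathrm{rank}(G_2)=\mathrm{rank}(A_1)=r_1$. For the inductive step, I would take the first unfolding $A_1=[A(i_1;i_2\dots i_d)]$, which by definition has $\mathrm{rank}(A_1)=r_1$, and write a rank-$r_1$ factorization $A_1=U V$ where $U$ is $n_1\times r_1$ of full column rank and $V$ is $r_1\times (n_2\cdots n_d)$ of full row rank; the entries of $U$ give $G_1(i_1,\alpha_1)$.

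\textbf{Key steps in order.} (1) Set $G_1(i_1,\alpha_1)=U(i_1,\alpha_1)$. (2) Reshape the factor $V$ into a $(d-1)$-dimensional tensor $\mathbf{B}=[B(\alpha_1 i_2;\,i_3\dots i_d)]$ of size $(r_1 n_2)\times n_3\times\cdots\times n_d$, treating the pair $(\alpha_1,i_2)$ as the new first spatial index. (3) The crucial bookkeeping point: I must verify that the $k$-th unfolding of $\mathbf{B}$, namely $B_{k-1}=[B(\alpha_1 i_2\dots i_k;\,i_{k+1}\dots i_d)]$, has rank exactly $r_k=\mathrm{rank}(A_k)$ for $k=2,\dots,d-1$. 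This follows because $B_{k-1}$ is obtained from $A_k$ by a left multiplication by the full-column-rank matrix associated with $U$ (more precisely, $A_k$ factors through $B_{k-1}$ via $U$, and conversely $B_{k-1}$ factors through $A_k$ since $V$'s row space equals $A_1$'s row space); both inequalities $\mathrm{rank}(B_{k-1})\le\mathrm{rank}(A_k)$ and $\mathrm{rank}(A_k)\le\mathrm{rank}(B_{k-1})$ then hold, using that the column space of $A_k$ viewed appropriately is contained in the column space built from $U$. (4) Apply the induction hypothesis to $\mathbf{B}$ to obtain cores $G_2(\alpha_1 i_2,\alpha_2),G_3(\alpha_2,i_3,\alpha_3),\dots,G_d(\alpha_{d-1},i_d)$ realizing the exact TT decomposition of $\mathbf{B}$ with compression ranks $r_2,\dots,r_{d-1}$. (5) Reinterpret $G_2(\alpha_1 i_2,\alpha_2)$ as $G_2(\alpha_1,i_2,\alpha_2)$ and combine with $G_1$; substituting the factorization $A_1=UV$ back and unraveling the multi-index contractions shows that the resulting train reproduces every entry $A(i_1,\dots,i_d)$, i.e. $\mathbf{A}=\mathbf{T}$.

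\textbf{Main obstacle.} The only genuinely delicate point is step (3): showing that the unfolding ranks are \emph{preserved exactly} — not merely bounded above — when we pass from $\mathbf{A}$ to the reduced tensor $\mathbf{B}$. The upper bound $\mathrm{rank}(B_{k-1})\le r_k$ is the easy direction (each column of $B_{k-1}$ is a linear image of the corresponding column of $A_k$ via a left factor coming from $U$). The reverse inequality requires observing that the row space of $V$ coincides with the row space of $A_1$, so no rank is lost downstream; concretely, one argues that $A_k = (\text{something involving } U)\cdot B_{k-1}$ and also $B_{k-1}=(\text{a left inverse of }U\text{ on its column space})\cdot A_k$, after suitable reshaping of the shared index block $i_1\dots i_k$ into $(\alpha_1,i_2\dots i_k)$. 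Everything else is routine index manipulation and the base case; with step (3) in hand the induction closes immediately.
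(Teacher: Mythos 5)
The paper does not give its own proof of Theorem \ref{thtt}; the result is cited from \cite{O09}, and the paper instead sketches the same inductive peeling-off construction in its discussion of Theorem \ref{thot}. Your plan follows that skeleton and is correct, and you have rightly isolated the one delicate point, namely exact preservation of the downstream unfolding ranks, which the approximation-oriented sketch in the paper does not need to address. One small streamlining there: from $A_1=UV$ you have $A(i_1,\dots,i_d)=\sum_{\alpha_1}U(i_1,\alpha_1)\,B(\alpha_1,i_2,\dots,i_d)$ entrywise, and therefore, after grouping the row multi-index $i_1\dots i_k$ with $(\alpha_1,i_2\dots i_k)$, the unfolding identity $A_k=(U\otimes I_{n_2\cdots n_k})\,B_{k-1}$ holds (up to the ordering convention on the combined index). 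Since $U$ has full column rank $r_1$, so does $U\otimes I_{n_2\cdots n_k}$, and left multiplication by a full-column-rank matrix preserves rank; hence $\rank(A_k)=\rank(B_{k-1})$ follows in one stroke, with no need for the separate two-inequality argument about row spaces.
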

There is a large and growing number
of important applications of TT decompositions 
(\ref{eqtt})
to modern computations
(cf. e.g., \cite{OT09}, \cite{OT10}, \cite{OT11})
where the numerical ranks 
of the unfolding matrices $A_k$
are much smaller than their ranks, and
 it is desired to
 compress TT decompositions respectively.

%------------------------------------------------------------------------------

\begin{theorem}\label{thot} \cite{OT10}.
For any tensor ${\bf A}$ of (\ref{eqtens})
and any set of positive integers 
 $r_k\le \rank (A_k)$,  $k=1,\dots,d-1$,
there exists a TT decomposition (\ref{eqtt})
such that
\begin{equation}\label{eqot}
||{\bf A}-{\bf T}||_F^2\le \sum_{k=1}^{d-1}\tau_k^2,~\tau_k=\min_{\rank (B)=r_k}||A_k-B||_F,~k=1,\dots,d-1. 
\end{equation}
\end{theorem}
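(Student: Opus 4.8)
The plan is to prove Theorem~\ref{thot} by induction on the number of dimensions $d$, peeling off one index at a time and controlling the accumulated Frobenius error by the Pythagorean behaviour of orthogonal projections. First I would treat the base case $d=2$: here the tensor is just a matrix $A_1$, the asserted decomposition is a rank-$r_1$ factorization $G_1 G_2$, and the inequality $\|{\bf A}-{\bf T}\|_F \le \tau_1 = \min_{\rank(B)=r_1}\|A_1-B\|_F$ is attained by the truncated SVD of $A_1$ (the Eckart--Young theorem, which also underlies the error bound in Theorem~\ref{thsng}). This fixes $G_1$ as an orthonormal $n_1\times r_1$ matrix basis for the leading singular space $\mathbb{S}_{r_1,A_1}$ and $G_2$ as the corresponding $r_1\times n_2$ block.

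For the inductive step I would proceed as in the constructive argument behind Theorem~\ref{thtt}, but with truncation. Form the first unfolding matrix $A_1$ of size $n_1\times (n_2\cdots n_d)$, and let $U$ be an orthonormal $n_1\times r_1$ basis for the leading singular space $\mathbb{S}_{r_1,A_1}$, so that by Theorem~\ref{thsng} (applied with $S=U$, $\rho=r_1$) we have $\|A_1 - UU^T A_1\|_F^2 \le \bigl(\sum_{j>r_1}\sigma_j(A_1)^2\bigr) = \tau_1^2$, where I would record that the Frobenius truncation error equals the sum of squares of the discarded singular values, not merely $\sigma_{r_1+1}(A_1)$. Now set $G_1(i_1,\alpha_1)=U_{i_1,\alpha_1}$ and regard $V = U^T A_1$ as (the first unfolding of) a $(d-1)$-dimensional tensor ${\bf B}$ with spatial indices $(\alpha_1 i_2),i_3,\dots,i_d$. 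The key observation is that the $k$th unfolding of ${\bf B}$ is $((I_{r_1}\otimes \text{(rest)})$ acting as) a product of $U^T$ with a submatrix/reshaping of $A_{k}$, so that $\rank(B_{k-1}) \le \rank(A_k)$ and, more importantly, $\min_{\rank(C)=r_{k}}\|B_{k-1}-C\|_F \le \tau_{k}$ for each $k=2,\dots,d-1$; this is where one uses that multiplying by the orthonormal $U^T$ (on the appropriate index block) does not increase Frobenius distances to low-rank matrices. Apply the induction hypothesis to ${\bf B}$ with the ranks $r_2,\dots,r_{d-1}$ to obtain a TT decomposition ${\bf T}_B$ with $\|{\bf B}-{\bf T}_B\|_F^2 \le \sum_{k=2}^{d-1}\tau_k^2$, and reassemble ${\bf T}$ by prepending the carriage $G_1$.

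Finally I would combine the two error contributions. Because $U$ has orthonormal columns, the map ${\bf B}\mapsto$ (tensor with first unfolding $U\,B_1$) is an isometry in the Frobenius norm, so $\|{\bf A}-{\bf T}\|_F^2 = \|A_1 - U U^T A_1\|_F^2 + \|U U^T A_1 - U\,({\bf T}_B)_1\|_F^2 = \|A_1-UU^TA_1\|_F^2 + \|{\bf B}-{\bf T}_B\|_F^2 \le \tau_1^2 + \sum_{k=2}^{d-1}\tau_k^2 = \sum_{k=1}^{d-1}\tau_k^2$, which is exactly \eqref{eqot}. The Pythagorean split here is legitimate because $U U^T A_1 - U({\bf T}_B)_1$ lies in the column space of $U$ while $A_1 - UU^TA_1$ is orthogonal to it.

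The main obstacle I expect is the bookkeeping in the inductive step: verifying cleanly that $\min_{\rank(C)=r_k}\|B_{k-1}-C\|_F \le \tau_k = \min_{\rank(B)=r_k}\|A_k - B\|_F$, i.e., that truncating the first index via the orthogonal projector $U U^T$ cannot worsen the best rank-$r_k$ approximability of the later unfoldings. Concretely one writes $B_{k-1}$ as $\widehat{U}^T A_k$ for a suitable orthonormal matrix $\widehat U$ built from $U$ and identity blocks (a Kronecker product $U\otimes I$ up to index reshuffling), and then uses that for any target low-rank $B$ the matrix $\widehat U^T B$ has rank $\le r_k$ and $\|\widehat U^T A_k - \widehat U^T B\|_F \le \|A_k - B\|_F$ since $\widehat U^T$ has orthonormal rows. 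Making the reshaping/Kronecker identifications precise — matching the multi-index groupings of ${\bf B}$ against those of ${\bf A}$ — is the delicate part; once it is in place, everything else is the two-line Eckart--Young estimate and the Pythagorean identity.
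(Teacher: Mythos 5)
Your proof follows the same inductive SVD-truncation construction that the paper sketches (and attributes to \cite{OT10}): truncate the SVD of the first unfolding $A_1$ to rank $r_1$, set $G_1$ to the leading left singular vectors, pass the compressed $(d-1)$-dimensional tensor to the induction, and reassemble. The paper itself only outlines the construction and defers the error bound to \cite{OT10}; your Pythagorean decomposition of $\|{\bf A}-{\bf T}\|_F^2$ and the observation that left-multiplying the later unfoldings by the orthonormal $\widehat U^T = (U\otimes I)^T$ cannot increase best rank-$r_k$ approximability are exactly the missing details of that cited argument, and they are correct.
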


The constructive proof of this theorem in \cite{OT10}
relies on inductive approximation of unfolding matrices 
by their SVDs truncated to the compression ranks $r_k$.
Let us sketch this  construction.
For $d=2$ we obtain  a desired TT
decomposition
$T(i_1,i_2)=\sum_{\alpha_1}^{r_1}G_1(i_1,\alpha_1)G_2(\alpha_1,i_2)$
 (that is a sum of $r_1$
outer products of $r_1$ pairs of vectors) simply
 by  truncating 
 the SVD of the matrix $A(i_1,i_2)$.
At the inductive step one truncates the SVD 
of the first unfolding matrix 
$A_1=S_{A_1}\Sigma_{A_1}T_{A_1}^T$
to obtain rank-$r_1$ approximation of this matrix
$B_1=S_{B_1}\Sigma_{B_1}T_{B_1}^T$
 where $\Sigma_{B_1}=\diag(\sigma_j(A_1))_{j=1}^{r_1}$
and the matrices $S_{B_1}$ and $T_{B_1}$
are formed by the first $r_1$ columns of the matrices 
$S_{A_1}$ and $T_{A_1}$, respectively.
Then it remains to approximate the tensor
${\bf B}=[B(i_1,\dots,i_d)]$
represented by the matrix $B_1$.
Rewrite it as
$\sum_{\alpha_1=1}^nS_{B_1}(i_1;\alpha_1)\widehat A(\alpha_1;i_2\dots i_d)$
for $\widehat A=\sum_{B_1}T_{B_1}^T$,
represent $\widehat A$ as the tensor
${\bf \widehat A}=[A(\alpha_1i_2,i_3,\dots,i_d)]$
of dimension $d-1$, apply the inductive hypothesis
to obtain a TT-approximation of this tensor,
and extend it to a TT-approximation of the original tensor 
${\bf A}$.

In \cite{OT10} the authors specify this construction as 
their Algorithm 1,
 prove error norm bound (\ref{eqot}), then
point out that the 
``computation of the truncated SVD 
for large scale and possibly dense 
unfolding matrices ... is unaffordable
in many dimensions", propose 
``to replace SVD by some other dyadic decompositions
$A_k\approx UV^T$,
which can be computed with low complexity",  and 
finally specify such recipe as  \cite[Algorithm 2]{OT10},
 which
is an iterative algorithm for 
skeleton or pseudoskeleton decomposition
of matrices and which they
 use at Stages 5 and 6
of their Algorithm 1.
The cost of each iteration 
of \cite[Algorithm 2]{OT10}
is quite low, 
and empirically the iteration converges fast, but
the authors welcome alternative recipes
 having formal support.

Proto-Algorithm
\ref{algbasmp} 
%and \ref{algnrank} 
can
serve as an alternative to \cite[Algorithm 2]{OT10}.
For the input matrix $A_1$ above
we  
use $O(r_1)$ multiplications of this matrix
by $O(r_1)$ vectors, which means a low 
computational cost
 for sparse and structured inputs,
whereas the expected output is 
an approximate matrix basis for the space $\mathbb S_{r_1,A_1}$
or  $\mathbb T_{r_1,A_1}$ and 
a  rank-$r_1$ approximation to the matrix $A_1$,
within an expected error norm in
$O(\sigma_{r_1+1}(A_1))$. This is  the same order as
in  \cite[Algorithm 1]{OT10}, but now we do not use SVDs.
 One can further decrease the error bound by means
of small oversampling of Remark \ref{reovers} 
and the power transform of Remark
\ref{regap}. 
     
\begin{remark}\label{retns}
A huge bibliography on tensor decompositions 
and on thier application to fundamental matrix computations
has been recently surveyed in \cite{KB09}, but
with the omission of
 the early works
\cite{P72}, \cite{P79}, 
 \cite{B80},  \cite{P84}, 
 \cite{B86},
where nontrivial tensor decompositions helped to
accelerate the fundamental operation of matrix multiplication,
probably the first application of this kind.
\end{remark} 

%------------------------------------------------------------------------------

\section{Numerical Experiments}\label{sexp}

% - - - - - - - - - - - - - - - - - - - - - - - - - - - - - - - - - - - - -
 
Our numerical experiments with random general, Hankel, Toeplitz and circulant matrices 
have been performed in the Graduate Center of the City University of New York 
on a Dell server with a dual core 1.86 GHz
Xeon processor and 2G memory running Windows Server 2003 R2. The test
Fortran code has been compiled with the GNU gfortran compiler within the Cygwin
environment.  Random numbers have been generated with the random\_number
intrinsic Fortran function, assuming the uniform probability distribution 
over the range $\{x:-1 \leq x < 1\}$.  The tests have been designed by the first author 
and performed by his coauthor.

%------------------ - - - - - - - - - - - - - - - - - - - - - - - - - - - -

\subsection{GENP
with random circulant multipliers}\label{sexgeneral}

%------------------ - - - - - - - - - - - - - - - - - - - - - - - - - - - -

Table \ref{tab44} shows the results of our tests of the solution 
of a nonsingular well conditioned linear system $A{\bf y}={\bf b}$ of $n$ equations 
whose coefficient matrix has ill conditioned
%well conditioned leading principal blocks except for
$n/2\times n/2$   leading principal block for $n=64, 256,1024$.
We have performed 100 numerical tests for each dimension $n$ and computed 
the maximum, minimum and average relative residual norms 
$||A{\bf y}-{\bf b}||/||{\bf b}||$
as well as standard deviation. 
GENP applied to these systems outputs corrupted solutions with residual norms
ranging from 10 to $10^8$. When we preprocessed the systems with circulant multipliers
filled with $\pm 1$ (choosing the $n$ signs $\pm$ at random), the norms decreased to at
worst $10^{-7}$ for all inputs. Table \ref{tab44} also shows further decrease of the norm
in a single step of iterative refinement. Table 2 in \cite{PQZa}
shows similar results of the tests where  
the input matrices have been chosen similarly
but so that their every leading $k\times k$ block 
 had numerical rank $k$ or $k-1$
and where  Householder multipliers $I_n-{\bf u}{\bf v}^T/{\bf u}^T{\bf v}$
replaced the circulant multipliers. Here ${\bf u}$ and ${\bf v}$
denote two vectors filled with integers $1$ and $-1$ under 
random choice of the signs $+$ and $-$.

%------------------------------------------------------------------------------

\subsection{Approximation of the tails and heads of SVDs and 
low-rank appro\-xi\-ma\-tion of 
a matrix }\label{stails}  

%------------------------------------------------------------------------------

At some specified stages of our tests of this subsection 
we performed  
additions, subtractions and multiplications
with infinite precision
(hereafter referred to as {\em error-free ring operations}). 
At the other stages we performed computations 
 with double 
precision, and we rounded to 
double 
precision
all random values. We performed at most two refinement iterations for 
the computed solution of every linear system of equations
and matrix inverse.

Tables \ref{tabSVD_HEAD1} and \ref{tabSVD_HEAD1T}  
display the data from our tests on the approximation
of leading singular spaces of the SVD of an $n\times n$
matrix $A$ having 
numerical rank $q$ and on the
 approximation of this matrix with a matrix of rank $\rho$. 
For $n=64, 128, 256$ and  $\rho=1,8,32$ we  generated $n\times n$ random 
orthogonal matrices $S$ and $T$ and diagonal matrices 
$\Sigma=\diag(\sigma_j)_{j=1}^n$ such that $\sigma_j=1/j,~j=1,\dots,\rho$,
$\sigma_j=10^{-10},~j=\rho+1,\dots,n$ (cf. \cite[Section 28.3]{H02}). 
Then we applied error-free  ring operations to compute the input matrices
$A=S_A\Sigma_A T_A^T$, for which $||A||=1$ and $\kappa (A)=10^{10}$.
Furthermore 
we  generated 
 random $n\times \rho$ matrices $G$ (for $\rho=1,8,32$)
and successively computed the matrices
$B_{\rho,A}=A^TG$,
 $T_{\rho,A}$,  
$B_{\rho,A}Y_{\rho,A}$  as a least-squares approximation to $T_{\rho,A}$, 
%(the latter five matrices by applying error-free  ring operations),
$Q_{\rho,A}=Q(B_{\rho,A})$ (cf. Fact \ref{faqrf}), and 
$A-AQ_{\rho,A}(Q_{\rho,A})^T$ (by applying error-free  ring operations). 
Table \ref{tabSVD_HEAD1}  summarizes the data on  the residual norms 
${\rm rn}^{(1)}=||B_{\rho,A}Y_{\rho,A}-T_{\rho,A}||$ 
and
${\rm rn}^{(2)}=||A-AQ_{\rho,A}(Q_{\rho,A})^T||$ 
obtained in 100 runs of our tests
for every pair of $n$ and $\rho$.

We have also performed similar tests where  
we generated random Toeplitz $n\times \rho$ matrices $T$ 
(for $\rho=8,32$)
and then
replaced the above approximate matrix bases 
 $B_{\rho,A}=A^TG$
for the leading singular space $\mathbb T_{\rho,A}$
by the matrices $B_{\rho,A}=A^TT$.
Table \ref{tabSVD_HEAD1T}  displays the results of these tests.
In both Tables \ref{tabSVD_HEAD1} and \ref{tabSVD_HEAD1T} the residual 
norms are more or less equally small.

%------------------------------------------------------------------------------

%\subsection{Computation of numerical ranks}\label{scond}

%------------------------------------------------------------------------------

%We applied Proto-Algorithm \ref{algnrank} 
%as well as its variation specified in Remark \ref{renmb1}
%tothe same input matrices $A$ 
%as above and to the integers $\rho_-=1$ and $\rho_-=\rho$.
%We used $n\times \rho$ random Toeplitz matrices $T$ and
%(a) exhaustive linear search for $\rho$ and
%(b) binary search for $\rho$.
%For the Subroutine COND we applied (i) Matblab function XXXXX???
%which outputs the condition number of the input matrix or
%NEARLY SINGULAR OR ILL CONDITIONED
%and (ii) the algorithm of Section \ref{srce},
%which extends the algorithm of \cite{D83}.
%We display the results of the tests in Tables \ref{tabria}-\ref{tabriib}???,
%which show the data on the number of loops of the algorithm and the percents of failures
%and wrong outputs.

%------------------------------------------------------------------------------

\begin{table}[ht]
  \caption{Relative residual norms:
 randomized circulant GENP for  
well conditioned linear systems with
ill conditioned leading blocks
  (cf. \cite[Table 2]{PQZa})}
  \label{tab44}
  \begin{center}
    \begin{tabular}{| c | c | c | c | c | c | c |}
      \hline
      \bf{dimension} & \bf{iterations} & \bf{min} & \bf{max} & \bf{mean} & \bf{std} \\ \hline
%w/o pivoting w/ preconditioning
 $64$ & $0$ & $4.7\times 10^{-14}$ & $8.0\times 10^{-11}$ & $4.0\times 10^{-12}$ & $1.1\times 10^{-11}$ \\ \hline
 $64$ & $1$ & $1.9\times 10^{-15}$ & $5.3\times 10^{-13}$ & $2.3\times 10^{-14}$ & $5.4\times 10^{-14}$ \\ \hline
 $256$ & $0$ & $1.7\times 10^{-12}$ & $1.4\times 10^{-7}$ & $2.0\times 10^{-9}$ & $1.5\times 10^{-8}$ \\ \hline
 $256$ & $1$ & $8.3\times 10^{-15}$ & $4.3\times 10^{-10}$ & $4.5\times 10^{-12}$ & $4.3\times 10^{-11}$ \\ \hline
 $1024$ & $0$ & $1.7\times 10^{-10}$ & $4.4\times 10^{-9}$ & $1.4\times 10^{-9}$ & $2.1\times 10^{-9}$ \\ \hline
 $1024$ & $1$ & $3.4\times 10^{-14}$ & $9.9\times 10^{-14}$ & $6.8\times 10^{-14}$ & $2.7\times 10^{-14}$ \\ \hline
    \end{tabular}
  \end{center}
\end{table}

%table ends..................................................................................

%------------------------------------------------------------------------------

\begin{table}[h] 
  \caption{Heads of SVDs and 
low-rank approximation by using random multipliers $G$}
\label{tabSVD_HEAD1}
  \begin{center}
    \begin{tabular}{| c | c | c | c | c | c |c|}
      \hline
$q$  & ${\rm rrn}_i$ & n & \bf{min} & \bf{max} & \bf{mean} & \bf{std}\\ \hline
%1 & $\kappa(C)$ & 64 &  $ 1.00\times 10^{+10} $  &  $ 1.00\times 10^{+10} $  &  $ 1.00\times 10^{+10} $  &  $ 0 $  \\ \hline		
%1 & $\kappa(C)$ & 128 &  $ 1.00\times 10^{+10} $  &  $ 1.00\times 10^{+10} $  &  $ 1.00\times 10^{+10} $  &  $ 0 $  \\ \hline		
%1 & $\kappa(C)$ & 256 &  $ 1.00\times 10^{+10} $  &  $ 1.00\times 10^{+10} $  &  $ 1.00\times 10^{+10} $  &  $ 0 $  \\ \hline	
%1 & ${\rm rn}^{(0)}$ & 64 &  $ x.xx\times 10^{-10} $  &  $ x.xx\times 10^{-07} $  &  $ x.xx\times 10^{-09} $  &  $ x.xx\times 10^{-08} $  \\ \hline		
%1 & ${\rm rn}^{(0)}$ & 128 &  $ x.xx\times 10^{-10} $  &  $ x.xx\times 10^{-08} $  &  $ x.xx\times 10^{-09} $  &  $ x.xx\times 10^{-09} $  \\ \hline		
%1 & ${\rm rn}^{(0)}$ & 256 &  $ x.xx\times 10^{-10} $  &  $ x.xx\times 10^{-08} $  &  $ x.xx\times 10^{-09} $  &  $ x.xx\times 10^{-09} $  \\ \hline			
1 & ${\rm rn}^{(1)}$ & 64 &  $ 2.35\times 10^{-10} $  &  $ 1.32\times 10^{-07} $  &  $ 3.58\times 10^{-09} $  &  $ 1.37\times 10^{-08} $  \\ \hline		
1 & ${\rm rn}^{(1)}$ & 128 &  $ 4.41\times 10^{-10} $  &  $ 3.28\times 10^{-08} $  &  $ 3.55\times 10^{-09} $  &  $ 5.71\times 10^{-09} $  \\ \hline		
1 & ${\rm rn}^{(1)}$ & 256 &  $ 6.98\times 10^{-10} $  &  $ 5.57\times 10^{-08} $  &  $ 5.47\times 10^{-09} $  &  $ 8.63\times 10^{-09} $  \\ \hline		
1 & ${\rm rn}^{(2)}$ & 64 &  $ 8.28\times 10^{-10} $  &  $ 1.32\times 10^{-07} $  &  $ 3.86\times 10^{-09} $  &  $ 1.36\times 10^{-08} $  \\ \hline		
1 & ${\rm rn}^{(2)}$ & 128 &  $ 1.21\times 10^{-09} $  &  $ 3.28\times 10^{-08} $  &  $ 3.91\times 10^{-09} $  &  $ 5.57\times 10^{-09} $  \\ \hline		
1 & ${\rm rn}^{(2)}$ & 256 &  $ 1.74\times 10^{-09} $  &  $ 5.58\times 10^{-08} $  &  $ 5.96\times 10^{-09} $  &  $ 8.47\times 10^{-09} $  \\ \hline		
%8 & $\kappa(C)$ & 64 &  $ 1.00\times 10^{+10} $  &  $ 1.00\times 10^{+10} $  &  $ 1.00\times 10^{+10} $  &  $ 0 $  \\ \hline		
%8 & $\kappa(C)$ & 128 &  $ 1.00\times 10^{+10} $  &  $ 1.00\times 10^{+10} $  &  $ 1.00\times 10^{+10} $  &  $ 0 $  \\ \hline		
%8 & $\kappa(C)$ & 256 &  $ 1.00\times 10^{+10} $  &  $ 1.00\times 10^{+10} $  &  $ 1.00\times 10^{+10} $  &  $ 0 $  \\ \hline	
%8 & ${\rm rn}^{(0)}$ & 128 &  $ x.xx\times 10^{-09} $  &  $ x.xx\times 10^{-06} $  &  $ x.xx\times 10^{-08} $  &  $ x.xx\times 10^{-07} $  \\ \hline		
%8 & ${\rm rn}^{(0)}$ & 256 &  $ x.xx\times 10^{-09} $  &  $ x.xx\times 10^{-07} $  &  $ x.xx\times 10^{-08} $  &  $ x.xx\times 10^{-08} $  \\ \hline	
%8 & ${\rm rn}^{(1)}$ & 64 &  $ 2.82\times 10^{-09} $  &  $ 2.44\times 10^{-07} $  &  $ 1.30\times 10^{-08} $  &  $ 2.65\times 10^{-08} $  \\ \hline		
8 & ${\rm rn}^{(1)}$ & 128 &  $ 2.56\times 10^{-09} $  &  $ 1.16\times 10^{-06} $  &  $ 4.30\times 10^{-08} $  &  $ 1.45\times 10^{-07} $  \\ \hline		
8 & ${\rm rn}^{(1)}$ & 256 &  $ 4.45\times 10^{-09} $  &  $ 3.32\times 10^{-07} $  &  $ 3.40\times 10^{-08} $  &  $ 5.11\times 10^{-08} $  \\ \hline		
8 & ${\rm rn}^{(2)}$ & 64 &  $ 1.46\times 10^{-09} $  &  $ 9.56\times 10^{-08} $  &  $ 5.77\times 10^{-09} $  &  $ 1.06\times 10^{-08} $  \\ \hline		
8 & ${\rm rn}^{(2)}$ & 128 &  $ 1.64\times 10^{-09} $  &  $ 4.32\times 10^{-07} $  &  $ 1.86\times 10^{-08} $  &  $ 5.97\times 10^{-08} $  \\ \hline		
8 & ${\rm rn}^{(2)}$ & 256 &  $ 2.50\times 10^{-09} $  &  $ 1.56\times 10^{-07} $  &  $ 1.59\times 10^{-08} $  &  $ 2.47\times 10^{-08} $  \\ \hline		
%32 & $\kappa(C)$ & 64 &  $ 1.00\times 10^{+10} $  &  $ 1.00\times 10^{+10} $  &  $ 1.00\times 10^{+10} $  &  $ 0 $  \\ \hline		
%32 & $\kappa(C)$ & 128 &  $ 1.00\times 10^{+10} $  &  $ 1.00\times 10^{+10} $  &  $ 1.00\times 10^{+10} $  &  $ 0 $  \\ \hline		
%32 & $\kappa(C)$ & 256 &  $ 1.00\times 10^{+10} $  &  $ 1.00\times 10^{+10} $  &  $ 1.00\times 10^{+10} $  &  $ 0 $  \\ \hline
%32 & ${\rm rn}^{(0)}$ & 64 &  $ x.xx\times 10^{-09} $  &  $ x.xx\times 10^{-06} $  &  $ x.xx\times 10^{-07} $  &  $ x.xx\times 10^{-07} $  \\ \hline		
%32 & ${\rm rn}^{(0)}$ & 128 &  $ x.xx\times 10^{-08} $  &  $ x.xx\times 10^{-06} $  &  $ x.xx\times 10^{-07} $  &  $ x.xx\times 10^{-07} $  \\ \hline		
%32 & ${\rm rn}^{(0)}$ & 256 &  $ x.xx\times 10^{-08} $  &  $ x.xx\times 10^{-06} $  &  $ x.xx\times 10^{-07} $  &  $ x.xx\times 10^{-07} $  \\ \hline				
32 & ${\rm rn}^{(1)}$ & 64 &  $ 6.80\times 10^{-09} $  &  $ 2.83\times 10^{-06} $  &  $ 1.01\times 10^{-07} $  &  $ 3.73\times 10^{-07} $  \\ \hline		
32 & ${\rm rn}^{(1)}$ & 128 &  $ 1.25\times 10^{-08} $  &  $ 6.77\times 10^{-06} $  &  $ 1.28\times 10^{-07} $  &  $ 6.76\times 10^{-07} $  \\ \hline		
32 & ${\rm rn}^{(1)}$ & 256 &  $ 1.85\times 10^{-08} $  &  $ 1.12\times 10^{-06} $  &  $ 1.02\times 10^{-07} $  &  $ 1.54\times 10^{-07} $  \\ \hline		
32 & ${\rm rn}^{(2)}$ & 64 &  $ 1.84\times 10^{-09} $  &  $ 6.50\times 10^{-07} $  &  $ 2.30\times 10^{-08} $  &  $ 8.28\times 10^{-08} $  \\ \hline		
32 & ${\rm rn}^{(2)}$ & 128 &  $ 3.11\times 10^{-09} $  &  $ 1.45\times 10^{-06} $  &  $ 2.87\times 10^{-08} $  &  $ 1.45\times 10^{-07} $  \\ \hline		
32 & ${\rm rn}^{(2)}$ & 256 &  $ 4.39\times 10^{-09} $  &  $ 2.16\times 10^{-07} $  &  $ 2.37\times 10^{-08} $  &  $ 3.34\times 10^{-08} $  \\ \hline		

    \end{tabular}
  \end{center}
\end{table}

%------------------------------------------------------------------------------
%------------------------------------------------------------------------------

\begin{table}[h] 
  \caption{Heads of SVDs and 
low-rank approximations by using  random Toeplitz multipliers $T$}
\label{tabSVD_HEAD1T}
  \begin{center}
    \begin{tabular}{| c | c | c | c | c | c |c|}
      \hline
$q$  & ${\rm rrn}^{(i)}$ & n & \bf{min} & \bf{max} & \bf{mean} & \bf{std}\\ \hline

8 & ${\rm rrn}^{(1)}$ & 64 &  $ 2.22\times 10^{-09} $  &  $ 7.89\times 10^{-06} $  &  $ 1.43\times 10^{-07} $  &  $ 9.17\times 10^{-07} $  \\ \hline		
8 & ${\rm rrn}^{(1)}$ & 128 &  $ 3.79\times 10^{-09} $  &  $ 4.39\times 10^{-05} $  &  $ 4.87\times 10^{-07} $  &  $ 4.39\times 10^{-06} $  \\ \hline				
8 & ${\rm rrn}^{(1)}$ & 256 &  $ 5.33\times 10^{-09} $  &  $ 3.06\times 10^{-06} $  &  $ 6.65\times 10^{-08} $  &  $ 3.12\times 10^{-07} $  \\ \hline			
8 & ${\rm rrn}^{(2)}$ & 64 &  $ 1.13\times 10^{-09} $  &  $ 3.66\times 10^{-06} $  &  $ 6.37\times 10^{-08} $  &  $ 4.11\times 10^{-07} $  \\ \hline				
8 & ${\rm rrn}^{(2)}$ & 128 &  $ 1.81\times 10^{-09} $  &  $ 1.67\times 10^{-05} $  &  $ 1.90\times 10^{-07} $  &  $ 1.67\times 10^{-06} $  \\ \hline			
8 & ${\rm rrn}^{(2)}$ & 256 &  $ 2.96\times 10^{-09} $  &  $ 1.25\times 10^{-06} $  &  $ 2.92\times 10^{-08} $  &  $ 1.28\times 10^{-07} $  \\ \hline	
32 & ${\rm rrn}^{(1)}$ & 64 &  $ 6.22\times 10^{-09} $  &  $ 5.00\times 10^{-07} $  &  $ 4.06\times 10^{-08} $  &  $ 6.04\times 10^{-08} $  \\ \hline
32 & ${\rm rrn}^{(1)}$ & 128 &  $ 2.73\times 10^{-08} $  &  $ 4.88\times 10^{-06} $  &  $ 2.57\times 10^{-07} $  &  $ 8.16\times 10^{-07} $  \\ \hline
32 & ${\rm rrn}^{(1)}$ & 256 &  $ 1.78\times 10^{-08} $  &  $ 1.25\times 10^{-06} $  &  $ 1.18\times 10^{-07} $  &  $ 2.03\times 10^{-07} $  \\ \hline	
32 & ${\rm rrn}^{(2)}$ & 64 &  $ 1.64\times 10^{-09} $  &  $ 1.26\times 10^{-07} $  &  $ 9.66\times 10^{-09} $  &  $ 1.48\times 10^{-08} $  \\ \hline
32 & ${\rm rrn}^{(2)}$ & 128 &  $ 5.71\times 10^{-09} $  &  $ 9.90\times 10^{-07} $  &  $ 5.50\times 10^{-08} $  &  $ 1.68\times 10^{-07} $  \\ \hline	
32 & ${\rm rrn}^{(2)}$ & 256 &  $ 4.02\times 10^{-09} $  &  $ 2.85\times 10^{-07} $  &  $ 2.74\times 10^{-08} $  &  $ 4.48\times 10^{-08} $  \\ \hline	
    \end{tabular}
  \end{center}
\end{table}

%------------------------------------------------------------------------------

\section{Conclusions}\label{sconcl}

% - - - - - - - - - - - - - - - - - - - - - - - - - - - - - - - - - - - - -

It is well known that random matrices 
tend to be well conditioned,
and  
this property motivates our application of 
random matrix multipliers
for advancing some fundamental
 matrix 
computations.
We first prove the basic fact 
that
with 
 a probability close to $1$ 
multiplication  
by a Gaussian random 
matrix does not increase 
the condition number of a matrix
and of its any block
dramatically
compared to the condition
number of the input matrix.
As an immediate implication 
random  multipliers are
likely to stabilize 
numerically 
 GENP
(that is Gaussian elimination with no pivoting)
and block Gaussian elimination
applied to a nonsingular and well conditioned matrix,
possibly having ill conditioned and singular 
leading blocks.
by applying to input matrix randomized
structured multipliers. 
Another basic fact states 
 that 
with 
 a probability close to $1$
the column sets of
the products $A^TG$ and $AH$
where an $m\times n$ matrix $A$ has a numerical rank $\rho$
and $G$ and $H$ are Gaussian random matrices
of sizes $m\times \rho$ and $n\times \rho$,
respectively, approximate some bases for the left and right 
leading singular
spaces $ \mathbb S_{\rho, A}$
and $ \mathbb T_{\rho, A}$
associated with the $\rho$ largest singular values
of the matrix $A$.  
Having any of such approximate bases 
available we can readily approximate 
the matrix $A$ by a matrix of rank $\rho$,
This has  further well known extensions to many
important matrix computations, 
and we point out  new extensions
to the
approximation of a matrix by a structured 
matrix lying  nearby,
to computing numercial rank of a matrix, 
and to
 Tensor Train approximation.  

Our tests consistently showed efficiency
of the proposed techniques even where
instead of general Gaussian random multipliers
we applied structured and sparse multipliers.
In these cases
randomization was limited to much fewer
random parameters or just
to the choice of the signs $\pm$ 
of a few auxiliary vectors. 
The recent paper \cite{T11} 
is an important step
toward
understanding and exploiting 
this phenomenon and should motivate further research
effort. 
Another natural research subject
 is the combination
of randomized matrix 
multiplication with
randomized techniques of
additive preprocessing and 
 augmentation,  
recently studied in  \cite{PGMQ}, \cite{PIMR10}, 
\cite{PQ10}, \cite{PQ12},
\cite{PQZC}, \cite{PQZa}, \cite{PQZb},  and \cite{PQZc}.

\bigskip
%\bigskip

%------------------------------------------------------------------------------

%\clearpage

%------------------------------------------------------------------------------

{\bf {\LARGE {Appendix}}}
\appendix 
%\bigskip

%------------------------------------------------------------------------------

\section{Uniform random sampling and nonsingularity of random matrices}\label{srsnrm}

%------------------------------------------------------------------------------

%Let $|\Delta|$ denote the cardinality of a  set $\Delta$ in  any fixed ring. 
{\em Uniform random sampling} of elements from a finite set $\Delta$ is their selection   
from  this set at random, independently of each other and
under the uniform probability distribution on the set $\Delta$. 

%------------------------------------------------------------------------------

\begin{theorem}\label{thdl} 
Under the assumptions of Lemma \ref{ledl} let the values of the variables 
of the polynomial be randomly and uniformly sampled from a finite set $\Delta$. 
Then the polynomial vanishes with a probability at most $\frac{d}{|\Delta|}$. 
\end{theorem}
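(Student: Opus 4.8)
The strategy is to reduce the statement to Lemma~\ref{ledl} (the Schwartz--Zippel--DeMillo--Lipton bound) by a straightforward counting argument. Lemma~\ref{ledl} already tells us that a polynomial $P$ in $m$ variables of total degree $d$, not vanishing identically on $\Delta^m$, has at most $d|\Delta|^{m-1}$ zeros in $\Delta^m$. Since uniform random sampling of the $m$ variables from $\Delta$ amounts to choosing a point of $\Delta^m$ uniformly at random out of $|\Delta|^m$ equally likely points, the probability that the sampled point is a zero of $P$ is at most $(d|\Delta|^{m-1})/|\Delta|^m = d/|\Delta|$. That is the claimed bound.

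First I would recall the set-up: we are under the assumptions of Lemma~\ref{ledl}, so $P$ is a polynomial in $m$ variables over the fixed ring, of total degree $d$, and $P$ does not vanish identically on $\Delta$ (more precisely on $\Delta^m$). Next I would invoke Lemma~\ref{ledl} verbatim to get that the zero set $Z = \{v \in \Delta^m : P(v) = 0\}$ satisfies $|Z| \le d|\Delta|^{m-1}$. Then I would note that independent uniform sampling of each of the $m$ variables from $\Delta$ produces a point of $\Delta^m$ distributed uniformly over the $|\Delta|^m$ points of $\Delta^m$, so
\[
\mathrm{Probability}\{P = 0\} = \frac{|Z|}{|\Delta|^m} \le \frac{d|\Delta|^{m-1}}{|\Delta|^m} = \frac{d}{|\Delta|}.
\]
This completes the argument.

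There is essentially no obstacle here: the theorem is an immediate corollary of Lemma~\ref{ledl} once one observes that uniform sampling makes each point of $\Delta^m$ equiprobable. The only point requiring a word of care is the degenerate case $|\Delta| < \infty$ small and $d \ge |\Delta|$, where the bound $d/|\Delta| \ge 1$ is trivially true and carries no information; one might mention this but it needs no separate treatment. If one wanted to be fully self-contained one could also re-prove Lemma~\ref{ledl} by induction on $m$ (base case $m=1$: a nonzero univariate polynomial of degree $d$ has at most $d$ roots; inductive step: expand $P$ in powers of the last variable and split on whether the leading coefficient, itself a polynomial in $m-1$ variables, vanishes), but since Lemma~\ref{ledl} is already stated in the excerpt I would simply cite it.
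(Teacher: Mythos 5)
Your proof is correct and is exactly the intended argument: the paper states Theorem~\ref{thdl} without an explicit proof, treating it as an immediate consequence of Lemma~\ref{ledl}, and your counting step (at most $d|\Delta|^{m-1}$ zeros out of $|\Delta|^m$ equiprobable sample points, giving probability at most $d/|\Delta|$) is precisely that consequence.
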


%------------------------------------------------------------------------------

\begin{corollary}\label{codlstr} 
Let the entries of a general or Toeplitz  $m\times n$ 
matrix have been randomly and uniformly 
sampled from a finite set $\Delta$ of cardinality $|\Delta|$ (in any fixed ring). 
Let $l=\min\{m,n\}$.
Then (a) every $k\times k$ submatrix $M$ for $k\le l$ is nonsingular with a probability at 
least $1-\frac{k}{|\Delta|}$ and (b) is strongly nonsingular with a probability at least 
$1-\sum_{i=1}^k\frac{i}{|\Delta|}= 1-\frac{(k+1)k}{2|\Delta|}$.
%Furthermore (c) if the submatrix $M$ is indeed nonsingular, then any entry of its inverse is 
%nonzero with a probability at least $1-\frac{k-1}{|\Delta|}$.
\end{corollary}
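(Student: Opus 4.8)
The plan is to reduce both parts to counting via Lemma~\ref{ledl} (equivalently Theorem~\ref{thdl}), exactly as one does for the classical Schwartz--Zippel argument. First I would fix a $k\times k$ submatrix $M$ for $k\le l$ and consider its determinant $\det M$ as a polynomial in the entries that were sampled from $\Delta$. In the general (unstructured) case these entries are $k^2$ independent indeterminates, and $\det M$ has total degree $k$; crucially $\det M$ does not vanish identically (e.g.\ the identity matrix is a specialization with determinant $1$). In the Toeplitz case the entries of $M$ are (possibly repeated) among the $m+n-1$ independent parameters $t_{1-n},\dots,t_{m-1}$, so $\det M$ becomes a polynomial in at most $2k-1$ variables, still of total degree $\le k$; one must check it is not identically zero, which follows because the $k\times k$ Toeplitz matrix with $t_0=1$ and all other free parameters set to $0$ is the identity. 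Then Theorem~\ref{thdl} gives $\mathrm{Probability}\{\det M=0\}\le k/|\Delta|$, which is part~(a).

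For part~(b), recall that $M$ is strongly nonsingular iff its leading $i\times i$ blocks $M_i^{(i)}$ are nonsingular for all $i=1,\dots,k$. I would apply part~(a) to each such leading block (each is itself a $k'\times k'$ general or Toeplitz matrix with $k'=i\le l$, so the hypotheses still hold), obtaining $\mathrm{Probability}\{\det M_i^{(i)}=0\}\le i/|\Delta|$. A union bound over $i=1,\dots,k$ then yields
\begin{equation*}
\mathrm{Probability}\{M\text{ not strongly nonsingular}\}\le \sum_{i=1}^{k}\frac{i}{|\Delta|}=\frac{(k+1)k}{2|\Delta|},
\end{equation*}
so $M$ is strongly nonsingular with probability at least $1-\frac{(k+1)k}{2|\Delta|}$, which is part~(b).

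The only genuinely nontrivial point is verifying the non-identical-vanishing of the relevant determinants in the \emph{Toeplitz} case, since there the variables are shared across many entries and one cannot invoke independence of matrix entries; I would dispatch this by exhibiting the explicit specialization to the identity matrix noted above (and, if one wants the full submatrix statement rather than only leading blocks, observe that any $k\times k$ submatrix of a Toeplitz matrix is again Toeplitz, since selecting rows $i_1<\dots<i_k$ and columns $j_1<\dots<j_k$ with equal gaps, or more simply noting the argument of part~(b) only ever needs leading blocks). Everything else is bookkeeping: total-degree bounds for determinants and a union bound, both routine.
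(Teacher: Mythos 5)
Your approach is the same as the paper's: treat the determinant as a polynomial of total degree $k$ in the sampled entries, apply Theorem~\ref{thdl} to bound the vanishing probability by $k/|\Delta|$, and for part~(b) sum over the leading $i\times i$ blocks via a union bound. The paper's own proof is in fact terser than yours---it simply asserts that ``the claimed properties of nonsingularity and nonvanishing hold for generic matrices'' without attempting to justify the non-identical-vanishing in the Toeplitz case, whereas you correctly flag it as the only nontrivial point.

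However, your attempted dispatch of that point contains a misstatement: it is \emph{not} true that every $k\times k$ submatrix of a Toeplitz matrix is again Toeplitz. For example, select rows $\{1,3\}$ and columns $\{1,2\}$ of the $3\times 3$ Toeplitz matrix $(t_{i-j})$ to obtain $\left(\begin{smallmatrix} t_0 & t_{-1} \\ t_2 & t_1 \end{smallmatrix}\right)$, which has no Toeplitz structure. Consequently, your proposed specialization $t_0=1$, all other parameters $0$, does \emph{not} in general produce an identity block for a non-leading submatrix (in the example it gives $\left(\begin{smallmatrix}1&0\\0&0\end{smallmatrix}\right)$), so the non-vanishing of the determinant is not established that way for part~(a) in the Toeplitz case. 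One would instead need to exhibit, for each choice of row and column indices, a specialization of $t_{1-n},\dots,t_{m-1}$ making that particular minor nonzero (or cite the known fact that a generic Toeplitz matrix has all square minors nonsingular). This gap exists equally in the paper's one-line assertion, so your argument is not worse than the paper's, but the specific claim about submatrices being Toeplitz is false and should be removed. Part~(b), which involves only leading blocks (which are genuinely Toeplitz), is fine as you wrote it.

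Incidentally, the paper's proof text refers to a part~(c) about entries of the inverse; this is a leftover from an earlier draft in which the corollary had a third part, and does not correspond to the stated corollary, so no response to it was required from you.
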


%------------------------------------------------------------------------------

\begin{proof}
The claimed properties of nonsingularity and nonvanishing hold for generic matrices. 
The singularity of a $k\times k$ matrix means that its determinant vanishes,
but the determinant is a polynomial of total degree $k$ in the entries. Therefore
Theorem \ref{thdl} implies
parts (a) and consequently (b). Part (c) follows because a fixed entry of the inverse vanishes
if and only if the respective entry of the adjoint vanishes, but up to the sign the latter 
entry is the determinant of a $(k-1)\times (k-1)$ submatrix of the input matrix $M$, and so it is
a polynomial of degree $k-1$ in its entries. 
\end{proof}

%------------------------------------------------------------------------------

\section{Computation of numerical ranks}\label{stlss1}

%------------------------------------------------------------------------------

The customary algorithms for the 
numerical  rank of a matrix 
rely on computing its SVD or 
rank revealing factorization
and involving 
pivoting or orthogonalization.
 Proto-Algorithm 
\ref{algbasmp} also
uses
rank revealing factorization
at Stage 2 and matrix inversion or 
orthogonalization at Stage 3,
but only 
with matrices of small 
sizes
provided the integer $\rho_+$ is small.
Our next 
alternative  
 algorithm
avoids pivoting and
orthogonalization 
even where
%the given upper bound  $\rho_+$ on
the numerical  rank $\rho$ is
large.
As by-product we compute an
approximate matrix basis 
within an error norm in $O(\sigma_{\rho+1}(\tilde A))$
for 
the leading  singular space 
$ \mathbb T_{\rho, \tilde A}$ 
of an $m\times n$ matrix $ \tilde A$ 
and 
can extend this readily to 
computing a
rank-$\rho$ 
approximation of the matrix $\tilde A$
(see Remark \ref{recorr}).
 We let
$m\ge n$
(else shift to $ \tilde A^T$),
let
$[\rho_-,\rho_+]=[0,n]$
unless we know a more narrow range,
and successively test the selected 
candidate integers in the  range
$[\rho_-,\rho_+]$
until we find the numerical rank $\rho$. 
To improve reliability, we can 
repeat the tests for
distinct values of random parameters
(see Remarks \ref{recorr}).

Exhaustive search
 defines and verifies the numerical rank $\rho$
with probability near $1$, but with proper
policies one can use fewer and simpler tests
because  
for $G\in \mathcal G_{0,1}^{m\times s}$
(and empirically for various random sparse and 
structured matrices $G$ as well)
 the matrix $B= \tilde A^TG$ is expected 
(a) to have full rank and  to be
well conditioned 
if and only if $s\ge \rho$,
(b) to  
 approximate a
matrix basis    
(within an error norm 
in $O(\sigma_{\rho+1}(\tilde A))$)
for a linear space   
$\mathbb T\supseteq \mathbb T_{\rho,B}=\mathbb T_{\rho, \tilde A}$ 
%(shared by the matrices $B$ and $ A$)
where
$s\ge \rho$, and (c)  to
approximate 
a
matrix basis
(within an error norm 
in $O(\sigma_{\rho+1}(\tilde A))$)
 for the space 
$\mathbb T_{\rho, \tilde A}$ where $s=\rho$.
Property (a) is implied by Theorem \ref{1},
properties (b) and (c) by Theorem \ref{thover}.

%------------------------------------------------------------------------------

\begin{protoalgorithm}\label{algnrank} {\bf Numerical rank
without pivoting and orthogonalization} 
(see Remarks \ref{recorr}--\ref{rebsch}).

%------------------------------------------------------------------------------

\begin{description}

%------------------------------------------------------------------------------

\item[{\sc Input:}] 
Two integers $\rho_-$ and $\rho_+$ and a
matrix 
$ \tilde A\in \mathbb R^{m\times n}$ 
having unknown numerical rank $\rho=\rank ( \tilde A)$
in the range $[\rho_-,\rho_+]$
such that $0\le \rho_-< \rho_+\le n\le m$,
%two positive tolerances $\epsilon$ and $\epsilon'$, $\epsilon\le \epsilon'$,
a rule for the selection of a candidate 
integer
$\rho$ in a range $[\rho_-,\rho_+]$,
and  a Subroutine COND
that determines whether a given matrix 
has  full rank and is well 
conditioned or not.

%------------------------------------------------------------------------------

\item[{\sc Output:}] 
an integer  $\rho$ expected to equal
numerical rank 
of the matrix 
$\tilde A$ and
a
matrix $B$
 expected to
approximate 
(within an error norm 
in $O(\sigma_{\rho+1}(\tilde A))$)
a matrix basis
of the 
singular space  $\mathbb T_{\rho, \tilde A}$.
(Both expectations can actually fail, but
with a low probability, see Remark \ref{recorr}.)

%------------------------------------------------------------------------------ 

\item[{\sc Initialization:}] 
Generate matrix $G\in \mathcal G_{0,1}^{m\times \rho_+}$ and
write  $B= \tilde A$, $G_{\rho}=G(I_{\rho}~|~O_{\rho,m-\rho})^T$ for $\rho=\rho_-,\rho_-+1,\dots,\rho_+$
(The $m\times \rho$ matrix $G_{\rho}$ is formed by the first $\rho$ columns of the matrix $G$.)

%------------------------------------------------------------------------------ 

\item{\sc Computations}: $~$ 

%------------------------------------------------------------------------------

\begin{enumerate}

%------------------------------------------------------------------------------

\item %1 
Stop and output $\rho=\rho_+$ and the matrix $B$
if $\rho_-=\rho_+$.
Otherwise fix an integer $\rho$ in the range $[\rho_-,\rho_+]$.

%------------------------------------------------------------------------------

\item %2
Compute the matrix $B'=B^TG_{\rho}$
and apply to it the Subroutine COND. 

%------------------------------------------------------------------------------

\item %3
If this
matrix  has full rank
and is well 
conditioned,
write 
$\rho_+=\rho$
and $B= B'$
and go to Stage 1.
Otherwise 
write $\rho_-=\rho$
and go to Stage 1.

%------------------------------------------------------------------------------

\end{enumerate}

%------------------------------------------------------------------------------

\end{description}

%------------------------------------------------------------------------------

\end{protoalgorithm} 

\begin{remark}\label{recorr}
The algorithm can output a wrong value 
of the numerical rank, although 
by virtue of Theorems \ref{thsng} and \ref{thover}
combined
this occurs with
a 
low probability. 
One can decrease this probability
by reapplying
the algorithm
to the same inputs and 
choosing distinct random 
parameters. Furthermore one can
fix
a 
tolerance $\tau$
of order $\sigma_{\rho+1}(\tilde A)$,
set $T=B$, and apply Stage 3 of 
Proto-Algorithm \ref{algbasmp}.
Then
$\nrank (\tilde A)$ 
is expected to exceed
the computed 
value $\rho$
if this stage outputs
 FAILURE and to equal $\rho$
otherwise, in which case 
Proto-Algorithm \ref{algbasmp} also
outputs a
rank-$\rho$ approximation of the matrix $\tilde A$
(within an error norm $\tau ||\tilde A||$
in $O(\sigma_{\rho+1}(\tilde A))$). 
For a sufficiently small tolerance  $\tau$
the latter outcome
implies that
certainly  $\rho\ge \nrank (\tilde A)$.
\end{remark}

\begin{remark}\label{rescond}
We can avoid 
pivoting and orthogonalization
in Subroutine COND 
by applying the Power 
or Lanczos algorithms \cite{B74}, \cite{D83},
\cite{KW92}.
We can first apply the Power Method
to the matrix $S= \tilde A^T \tilde A$ 
or $S= \tilde A \tilde A^T$
to yield  a 
  close  upper bound $\sigma_+^2$ 
on  its largest eigenvalue $\sigma_1^2(\tilde A)$
and then to the matrix
$\sigma_+^2 I- \tilde A^T \tilde A$
to approximate the smallest
eigenvalue of the matrix $S$,
equal to $\sigma^2_n( \tilde A)$.
The Lanczos algorithm
approximates 
both extremal eigenvalues 
of the matrix $S= \tilde A^T \tilde A$
(equal to $\sigma_1^2(\tilde A)$
and $\sigma_l^2(\tilde A)$, 
$l=\min\{m,n\}$
respectively) and
 converges much faster \cite[Sections 9.1.4, 9.1.5]{GL96}.
\end{remark}

\begin{remark}\label{resmpl1}
One can simplify Stage 2 by applying the Subroutine 
COND to the matrix $G'_{\rho}=F_{\rho}G_{\rho}$ of 
a smaller size (rather than to $G_{\rho}$)
where $F_{\rho}\in \mathcal G_{0,1}^{\rho\times m}$. By virtue of  
Theorem \ref{1} the matrices $G_{\rho}$ and $G'_{\rho}$ are
likely to have condition numbers of the same order.
\end{remark}

\begin{remark}\label{rebsch}
The binary search
$\rho=\lceil(\rho_-+\rho_+)/2\rceil$
is an attractive policy for choosing
the candidate values $\rho$,
but one may prefer to move 
toward $\rho_-$, the left end of the range 
more rapidly,
to decrease the size of the matrix $B'$.
\end{remark}

%------------------------------------------------------------------------------

{\bf Acknowledgements:}
Our research has been supported by NSF Grant CCF--1116736 and
PSC CUNY Awards 64512--0042 and 65792--0043.
 We are also grateful to 
Mr. Jesse Wolf for helpful comments.

%$~$

%{\bf Acknowledgements} I wish to thank the reviewers for thoughtful and helpful comments.  

%$~$

%------------------------------------------------------------------------------

\end{document}